\theoremstyle{definition}
\newtheorem{thm}{Theorem}
\newtheorem{prop}[thm]{Proposition}
\newtheorem{lem}[thm]{Lemma}
\newtheorem{ex}{Example}
\newtheorem{rem}{Remark}
\numberwithin{equation}{section}
\numberwithin{thm}{section}
\numberwithin{rem}{section}
\title{
Generation of point sets by convex optimization 
for interpolation in reproducing kernel Hilbert spaces}
\author{
Ken'ichiro Tanaka%
\footnote{
Department of Mathematical Informatics,
Graduate School of Information Science and Technology,
University of Tokyo. 
7-3-1 Hongo, Bunkyo-ku, Tokyo, 113-8656, Japan. 
\texttt{kenichiro@mist.i.u-tokyo.ac.jp}}
}
\date{April 30, 2019}
\begin{document}

\maketitle

\begin{abstract}
We propose algorithms to take point sets 
for kernel-based interpolation of functions in reproducing kernel Hilbert spaces (RKHSs)
by convex optimization. 
We consider the case of kernels with the Mercer expansion 
and propose an algorithm by deriving a second-order cone programming (SOCP) problem 
that yields $n$ points at one sitting for a given integer $n$. 
In addition, 
by modifying the SOCP problem slightly, 
we propose another sequential algorithm that adds an arbitrary number of new points in each step.
Numerical experiments show that in several cases the proposed algorithms
compete with the $P$-greedy algorithm, 
which is known to provide nearly optimal points.

\end{abstract}

\noindent
{\small
\textbf{Keywords:}
reproducing kernel Hilbert space, 
kernel interpolation, 
point set, 
power function, 
optimal design, 
second order cone programming
}

\smallskip

\noindent
{\small
\textbf{Mathematics Subject Classification (2010):}
65D05, 65D15, 41A05, 46E22
}

%--------------------
\section{Introduction}	

We propose algorithms to take point sets 
for kernel-based interpolation of functions in reproducing kernel Hilbert spaces (RKHSs).
The algorithms are based on a convex optimization problem
derived by the Mercer series of the kernel. 

Kernel-based methods are useful in various fields of scientific computing and engineering. 
Their details and applications can be found in the references 
\cite{bib:Buhmann_RBF_2000,bib:Fass_Meshfree2007,bib:FassMc_Kernel_2015,bib:SchabackWendland_ActaNumerica_2006,bib:Wendland_SDA_2005}. 
Many of the applications include approximation of multivariate functions by scattered data 
as a fundamental component. 
Therefore we focus on kernel interpolation in this paper. 
Let $d$ be a positive integer and let $\Omega \subset \mathbf{R}^{d}$ be a region.  
For a finite set $\{ x_{1}, \ldots, x_{n} \} \subset \Omega$ of $n$ pairwise distinct points 
and a set $\{ y_{1}, \ldots , y_{n} \} \subset \mathbf{R}$, 
we aim to find a function $s: \Omega \to \mathbf{R}$ 
with the interpolation condition $s(x_{i}) = y_{i}\ (i=1,\ldots, n)$. 
To this end, 
we consider a positive definite kernel $K: \Omega \times \Omega \to \mathbf{R}$
and the function $s$ given by 
\begin{align}
s(x) = \sum_{j=1}^{n} c_{j} \, K(x, x_{j}), 
\label{eq:K_interp}
\end{align}
where $c_{j}$ are determined by the interpolation condition. 

For a positive definite kernel $K$, 
there exists a unique Hilbert space $\mathcal{H}_{K}(\Omega)$ 
of functions from $\Omega$ to $\mathbf{R}$ 
where $K$ is a reproducing kernel, that is, 
\begin{align}
& \text{(a)} \  \forall x \in \Omega, \ K(\cdot, x) \in \mathcal{H}_{K}(\Omega),\\
& \text{(b)} \  \forall x \in \Omega, \ \forall f \in \mathcal{H}_{K}(\Omega), \ \langle f, K(\cdot, x) \rangle_{\mathcal{H}_{K}(\Omega)} = f(x).
\end{align}
The reproducing kernel Hilbert space (RKHS) $\mathcal{H}_{K}(\Omega)$ is called the native space of $K$. 
As the kernel interpolation, we consider the case that $y_{j} = f(x_{j})$ for $f \in \mathcal{H}_{K}(\Omega)$. 
Then, we encounter a problem of approximating functions in $\mathcal{H}_{K}(\Omega)$ by the kernel interpolation. 

Such approximation is widely used, for example, 
for the numerical solution of partial differential equations via collocation 
\cite{bib:Nguyen_etal_Meshfree_survey_2008}. 
In such situations, 
we can choose the point set $\{ x_{1}, \ldots, x_{n} \}$ to achieve accurate computation. 
The choice is critical for the kernel interpolation. 
Actually, 
there are several fundamental results about the approximation power of the kernel interpolation 
for well-distributed points \cite[Chapter 11]{bib:Wendland_SDA_2005}. 
Based on these, 
many methods for choosing well-distributed points have been proposed: 
geometric greedy algorithm \cite{bib:DeMarchi2003,bib:DeMarchi2009,bib:DeMarchi_etal_2005}, 
$P$-greedy algorithm \cite{bib:DeMarchi2003,bib:DeMarchi_etal_2005,bib:SantinHaasdonk2017}, 
$f$-greedy algorithm \cite{bib:SchabackWendland_fgreedy_2000}, and 
$f/P$-greedy algorithm~\cite{bib:Muller_PhD_2009}. 
Furthermore, 
there are methods based on optimal designs and similar concepts to them. 
In the paper \cite{bib:Briani_etal_Fekete_2012}, 
the authors compute (approximate) Fekete points, 
the points maximizing the Vandermonde determinants with respect to a polynomial basis. 
For the computation, they use the routines provided in Matlab for general nonlinear optimization. 
For other methods, see~\cite[Appendix B]{bib:FassMc_Kernel_2015} and the references therein. 

In this paper, 
we focus on such methods that depend just on the space $\mathcal{H}_{K}(\Omega)$
and provide universal interpolation on it. 
The $P$-greedy algorithm and the algorithms for the Fekete points 
are in this category. 
The $P$-greedy algorithm is a sequential one adding a new point one by one in each step
as described in Section~\ref{sec:prelim}. 
It is easy to implement and known to be nearly optimal~\cite{bib:SantinHaasdonk2017}. 
On the other hand, 
the others are non-sequential algorithms providing a point set at one sitting. 
The Fekete points for a polynomial basis are known to be well-distributed, 
although they are given by non-convex optimization. 
Therefore we expect that algorithms of this type for a general kernel $K$ 
would provide good point sets for kernel interpolation in $\mathcal{H}_{K}(\Omega)$. 
To investigate whether this expectation is the case or not, 
we propose an algorithm for generating point sets by 
approximately maximizing the determinant of the kernel matrix $\mathcal{K} = (K(x_{i}, x_{j}))_{ij}$.
More precisely, 
we show that they can be computed 
by \textit{convex optimization} in the case of kernels with the Mercer expansion. 
Actually, we can formulate a second-order cone programming (SOCP) problem 
yielding point sets. 
As a by-product of this formulation, 
we provide a variant sequential algorithm that adds an arbitrary number of new points in each step.
By some numerical experiments, 
we observe that 
the point sets provided by the proposed algorithms are well-distributed 
and compete with the points provided by the $P$-greedy algorithm in several cases. 

The rest of this paper is organized as follows. 
In Section~\ref{sec:prelim}, 
we summarize the fundamental facts of positive definite kernels and reproducing kernel Hilbert spaces. 
In Section~\ref{sec:opt_prob_rkhs}, 
we derive a relaxed $D$-optimal design problem to generate a point set by using the Mercer expansion of a kernel. 
Then, according to the paper \cite{bib:Sagnol_etal_OptDesign_SOCP_2015}, 
we describe the equivalent formulation of the design problem as a SOCP problem in Section~\ref{sec:reform_by_SOCP}. 
We show the results of numerical experiments in Section~\ref{sec:num_expr} 
and conclude this paper by Section~\ref{sec:concl}.

%--------------------
\section{Interpolation in reproducing kernel Hilbert spaces}
\label{sec:prelim}

\subsection{Positive definite kernels}

Let $\Omega \subset \mathbf{R}^{d}$ be a region and
let $K: \Omega \times \Omega \to \mathbf{R}$ be a continuous, symmetric, and positive definite  kernel. 
That is, for any set $\mathcal{X}_{n} = \{ x_{1}, \ldots , x_{n} \} \subset \Omega$ of distinct points, 
the kernel matrix 
\begin{align}
\mathcal{K} = 
\begin{bmatrix}
K(x_{1}, x_{1}) & \cdots & K(x_{n}, x_{1}) \\ 
\vdots & \ddots & \vdots \\
K(x_{1}, x_{n}) & \cdots & K(x_{n}, x_{n})
\end{bmatrix}
\end{align}
is positive definite. 
Throughout this paper, 
we assume that $K$ has the Mercer series expansion given by
\begin{align}
K(x, y) = \sum_{\ell = 1}^{\infty} \lambda_{\ell} \, \varphi_{\ell}(x) \, \varphi_{\ell}(y), 
\label{eq:MercerExpansion}
\end{align}
where $\lambda_{1} \geq \lambda_{2} \geq \cdots > 0$, and 
$\{ \varphi_{\ell} \}$ is an orthonormal system in $L^{2}(\Omega, \rho)$, 
the Lebesgue space with a density function $\rho$. 
This expansion is known to be possible on appropriate conditions \cite[Theorem 2.2]{bib:FassMc_Kernel_2015}.

\begin{ex}[{Brownian motion kernel on $\Omega = [0,1] \subset \mathbf{R}$ \cite[Section A.1.2]{bib:FassMc_Kernel_2015}}]
\label{ex:kernel_Brownian}
The kernel 
\begin{align}
K(x, y) = \min\{x, y \} \qquad (x, y \in [0,1])
\notag
\end{align}
is the Brownian motion kernel on the domain $\Omega = [0,1]$. 
Its Mercer series is given by letting
\begin{align}
\lambda_{\ell} = \frac{4}{(2\ell-1)^{2} \pi^{2}}, \quad 
\varphi_{\ell}(x) = \sqrt{2} \sin \left( (2\ell-1)\frac{\pi x}{2} \right)
\notag
\end{align}
in \eqref{eq:MercerExpansion}.
\end{ex}

\begin{ex}[Spherical inverse multiquadric kernel on $\Omega = S^{2}$ {\cite[Section A.9.1]{bib:FassMc_Kernel_2015}}]
\label{ex:kernel_Spherical}
Let $\gamma$ be a real number with $0 < \gamma < 1$. 
The kernel 
\begin{align}
K(x,y) = \frac{1}{\sqrt{1+\gamma^{2} - 2\gamma \, x^{T} y}}
\qquad
(x,y \in S^{2})
\notag
\end{align}
is the spherical inverse multiquadric kernel on 
the sphere $S^{2} = \{ \| x \| = 1 \mid x \in \mathbf{R}^{3} \}$. 
Its Mercer series is given by
\begin{align}
K(x,y) = \sum_{n=0}^{\infty} \frac{4\pi \gamma^{n}}{2n+1} \sum_{\ell = 1}^{2n+1} Y_{n, \ell}(x) Y_{n, \ell}(y), 
\label{eq:sphere_Mercer}
\end{align}
where $Y_{n, \ell}$ are the spherical harmonics \cite{bib:AtkinsonHan_SpHarm_2012, bib:DaiXu_SpApprox_2013}. 
\end{ex}

\begin{ex}[Gaussian kernel on $\Omega \subset \mathbf{R}^{d}$ {\cite[Section 12.2.1]{bib:FassMc_Kernel_2015}}]
\label{ex:kernel_Gaussian}
Let $\varepsilon$ be a positive real number. 
The kernel
\begin{align}
K(x,y) = \exp(- \varepsilon^{2} \| x - y \|^{2})
\notag
\end{align}
is the multivariate Gaussian kernel. 
By letting $\beta = (1+(2\varepsilon/\alpha)^{2})^{1/4}$ and $\delta = \alpha^{2}(\beta^{2}-1)/2$ for a positive real number $\alpha$, 
its Mercer series is given by
\begin{align}
K(x,y) = \sum_{\boldsymbol{n} \in \mathbf{N}^{d}} 
\boldsymbol{\lambda}_{\boldsymbol{n}} \, \boldsymbol{\varphi}_{\boldsymbol{n}}(x) \, \boldsymbol{\varphi}_{n}(y), 
\qquad
\boldsymbol{\lambda}_{\boldsymbol{n}} = \prod_{\ell=1}^{d} \lambda_{n_{\ell}}, 
\qquad 
\boldsymbol{\varphi}_{\boldsymbol{n}}(x) = \prod_{\ell=1}^{d} \varphi_{n_{\ell}}(x_{\ell}), 
\label{eq:Gauss_Mercer}
\end{align}
where 
\begin{align}
\lambda_{n} = 
\sqrt{\frac{\alpha^{2}}{\alpha^{2}+\delta^{2}+\varepsilon^{2}}}
\left( \frac{\varepsilon^{2}}{\alpha^{2}+\delta^{2}+\varepsilon^{2}} \right)^{n-1},
\qquad
\varphi_{n}(x) = \sqrt{\frac{\beta}{2^{n-1} \Gamma(n)}} \, \mathrm{e}^{-\delta^{2} x^{2}} H_{n-1}(\alpha \beta x).
\label{eq:Gauss_eigen_val_func}
\end{align}
The functions $H_{n}$ are Hermite polynomials of degree $n$. 
\end{ex}

\subsection{Interpolation of functions in reproducing kernel Hilbert spaces}

Let $\mathcal{H}_{K}(\Omega)$ be 
the native space of $K$ on $\Omega$. 
We denote the inner product of $\mathcal{H}_{K}(\Omega)$ by
$\langle \ , \ \rangle_{\mathcal{H}_{K}(\Omega)}$. 
Then, the kernel $K$ satisfies the reproducing property:
\begin{align}
f \in \mathcal{H}_{K}(\Omega),
\quad
f(x) = \langle f, K(\cdot, x) \rangle_{\mathcal{H}_{K}(\Omega)}. 
\label{eq:R_prop}
\end{align}

For a function $f \in \mathcal{H}_{K}(\Omega)$, 
we consider its interpolant of the form
\begin{align}
s_{f}(x) = \sum_{j = 1}^{n} c_{j} K(x, x_{j}), 
\qquad 
x \in \Omega \subset \mathbf{R}^{d},
\label{eq:kernel_interp}
\end{align}
where $\mathcal{X}_{n} = \{x_{1}, \ldots, x_{n} \} \subset \Omega$ 
is a set of points for interpolation. 
The coefficients $c_{j}$ are determined by the interpolation equations
\begin{align}
s_{f}(x_{i}) = f(x_{i}), \quad i = 1,\ldots, n.
\end{align}
The interpolant in \eqref{eq:kernel_interp} can be rewritten in the form
\begin{align}
s_{f}(x) = \sum_{j = 1}^{n} f(x_{j})\, u_{j}(x), 
\end{align}
where $u_{j}$ are cardinal bases satisfying the Lagrange property
\begin{align}
u_{j}(x_{i}) = \delta_{ij}, 
\qquad
i,j = 1,\ldots, n.
\end{align}
The vector $u(x) = (u_{1}(x), \ldots , u_{n}(x))^{T}$ is determined by the linear equation
\begin{align}
\mathcal{K}\, u(x) = k(x), 
\label{eq:lin_eq_for_cardinal_u}
\end{align}
where 
\(
k(x) = 
(K(x, x_{1}), \ldots , K(x, x_{n}))^{T}
\). 

Using the reproducing property in~\eqref{eq:R_prop}, 
we can derive a well-known error bound of the interpolation as follows:
\begin{align}
| f(x) - s_{f}(x) | 
& = 
\left| f(x) - \sum_{j=1}^{n} f(x_{j}) \, u_{j}(x) \right| 
= 
\left| 
\left \langle f, K(\cdot, x) 
-
\sum_{j=1}^{n} K(\cdot, x_{j}) \, u_{j}(x)
\right \rangle_{\mathcal{H}_{K}(\Omega)}
\right| \notag \\
& =
\left| 
\left \langle f, K(\cdot, x) -
k(\cdot)^{T} \mathcal{K}^{-1} \, k(x)
\right \rangle_{\mathcal{H}_{K}(\Omega)}
\right| \notag \\
& \leq
\| f \|_{\mathcal{H}_{K}(\Omega)}
\left \| K(\cdot, x) - 
k(\cdot)^{T} \mathcal{K}^{-1} \, k(x)
\right \|_{\mathcal{H}_{K}(\Omega)} \notag \\
& = 
\| f \|_{\mathcal{H}_{K}(\Omega)} \, P_{K, \mathcal{X}_{n}}(x), 
\end{align}
where
\begin{align}
P_{K, \mathcal{X}_{n}}(x) = 
\sqrt{K(x, x) - 
k(x)^{T} \mathcal{K}^{-1} \, k(x) }
\label{eq:power_function}
\end{align}
is called the power function. 
The set of its zeros is $\mathcal{X}_{n} = \{ x_{1}, \ldots, x_{n} \}$. 

By using the power function, 
the $P$-greedy algorithm generating points for interpolation is described as follows%
\footnote{If $K$ is translation-invariant, we can choose a starting point $x_{1}$ arbitrarily.}. 
Start with $\mathcal{X}_{1} = \{ x_{1} \}$ for a point $x_{1} \in \Omega$ maximizing $\sqrt{K(x,x)}$, 
and 
\begin{align}
\mathcal{X}_{j} = \mathcal{X}_{j-1} \cup \{ x_{j} \} \quad \text{with} 
\quad 
P_{K, \mathcal{X}_{j-1}}(x_{j}) = \max_{x \in \Omega} P_{K, \mathcal{X}_{j-1}}(x)
\quad 
(j = 2,\ldots, n).
\notag
\end{align}

%--------------------
\section{Optimization problems to generate good point configurations}
\label{sec:opt_prob_rkhs}

\subsection{Upper bound of the power function}

If we can find a minimizer $\mathcal{X}_{n}^{\dagger} = \{ x_{1}^{\dagger}, \ldots, x_{n}^{\dagger} \}$ 
of the worst case error
\begin{align}
\max_{x \in \Omega} P_{K, \mathcal{X}_{n}}(x), 
\label{eq:original_obj_func}
\end{align}
we can take $\mathcal{X}_{n}^{\dagger}$ as one of the best point sets for the interpolation. 
However, this optimization problem is difficult. 
Then, we provide an upper bound of the power function 
and consider its minimization to obtain an approximate minimizer of the value in \eqref{eq:original_obj_func}. 
To this end, 
We start with providing an expression of the power function $P_{K, \mathcal{X}_{n}}(x)$ 
by using determinants of matrices. 

\begin{prop}[{\cite[\S 14.1.1]{bib:FassMc_Kernel_2015}}, \cite{bib:DeMarchi2003}]
\label{prop:pow_func_expr}
The power function $P_{K, \mathcal{X}_{n}}(x)$ in \eqref{eq:power_function} satisfies that
\begin{align}
P_{K, \mathcal{X}_{n}}(x)
= 
\left(
\frac{1}{\det \mathcal{K}} \, 
\det
\begin{bmatrix}
K(x,x) & k(x)^{T} \\
k(x) & \mathcal{K}
\end{bmatrix}
\right)^{1/2}.
\label{eq:pow_func_det_expr}
\end{align}
\end{prop}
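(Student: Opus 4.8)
The plan is to start from the definition of the power function in \eqref{eq:power_function}, namely $P_{K, \mathcal{X}_{n}}(x)^{2} = K(x,x) - k(x)^{T} \mathcal{K}^{-1} k(x)$, and show that the quantity inside the square root on the right-hand side of \eqref{eq:pow_func_det_expr} equals exactly this Schur-complement expression. The key observation is that the $(n+1) \times (n+1)$ block matrix
\begin{align}
M =
\begin{bmatrix}
K(x,x) & k(x)^{T} \\
k(x) & \mathcal{K}
\end{bmatrix}
\notag
\end{align}
has $\mathcal{K}$ as a principal submatrix, and $\mathcal{K}$ is invertible because it is positive definite. This is precisely the situation in which the Schur complement formula for block determinants applies.

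First I would invoke the standard block-determinant identity: for a block matrix with an invertible lower-right block $\mathcal{K}$, one has
\begin{align}
\det
\begin{bmatrix}
K(x,x) & k(x)^{T} \\
k(x) & \mathcal{K}
\end{bmatrix}
=
\det(\mathcal{K}) \cdot \left( K(x,x) - k(x)^{T} \mathcal{K}^{-1} k(x) \right).
\notag
\end{align}
The cleanest way to derive this is to exhibit the block $LU$-type factorization
\begin{align}
\begin{bmatrix}
K(x,x) & k(x)^{T} \\
k(x) & \mathcal{K}
\end{bmatrix}
=
\begin{bmatrix}
1 & k(x)^{T} \mathcal{K}^{-1} \\
0 & I
\end{bmatrix}
\begin{bmatrix}
K(x,x) - k(x)^{T} \mathcal{K}^{-1} k(x) & 0 \\
0 & \mathcal{K}
\end{bmatrix}
\begin{bmatrix}
1 & 0 \\
\mathcal{K}^{-1} k(x) & I
\end{bmatrix},
\notag
\end{align}
which one verifies by direct block multiplication. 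Taking determinants, the two triangular factors contribute $1$ each (their diagonal blocks are $1$ and the identity), so the determinant of $M$ equals the determinant of the middle block-diagonal matrix, which factors as $\left( K(x,x) - k(x)^{T} \mathcal{K}^{-1} k(x) \right) \det(\mathcal{K})$.

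Dividing by $\det(\mathcal{K})$, which is nonzero by positive definiteness, yields
\begin{align}
\frac{1}{\det \mathcal{K}} \det M = K(x,x) - k(x)^{T} \mathcal{K}^{-1} k(x) = P_{K, \mathcal{X}_{n}}(x)^{2},
\notag
\end{align}
and taking the nonnegative square root gives \eqref{eq:pow_func_det_expr}. I do not anticipate a serious obstacle here: the result is essentially a restatement of the Schur complement identity, and the only point requiring care is the justification that $\mathcal{K}$ is invertible, which is immediate since $K$ is a positive definite kernel and the $x_{j}$ are distinct. One might optionally remark that the radicand is nonnegative because $M$ itself is positive semidefinite (as the Gram-type matrix of $K(\cdot, x), K(\cdot, x_{1}), \ldots, K(\cdot, x_{n})$ in $\mathcal{H}_{K}(\Omega)$), which is what makes the square root well defined.
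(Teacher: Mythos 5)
Your proof is correct and follows essentially the same route as the paper: both establish the block-determinant (Schur complement) identity $\det M = \det(\mathcal{K})\,\bigl(K(x,x) - k(x)^{T}\mathcal{K}^{-1}k(x)\bigr)$ via a block-triangular factorization, the paper using a single one-sided elimination of the lower-left block and you using the symmetric $LDU$ form, which is an immaterial difference. Your added remarks on the invertibility of $\mathcal{K}$ and the nonnegativity of the radicand are correct and slightly more careful than the paper's proof.
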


\begin{proof}
The assertion is shown as follows:
\begin{align*}
P_{K, \mathcal{X}_{n}}({x})^{2}
& = 
K({x}, {x})
-
{k}({x})^{T}
\mathcal{K}^{-1} \, 
{k}({x}) \\
&=
\frac{1}{\det \mathcal{K}}
\det
\begin{bmatrix}
K({x}, {x})
-
{k}({x})^{T}
\mathcal{K}^{-1} \, 
{k}({x}) & {k}({x})^{T}\\
\boldsymbol{0} & \mathcal{K}
\end{bmatrix} \\
&=
\frac{1}{\det \mathcal{K}}
\det
\left(
\begin{bmatrix}
K({x},{x}) & {k}({x})^{T} \\
{k}({x}) & \mathcal{K}
\end{bmatrix} 
\begin{bmatrix}
1 & \boldsymbol{0} \\
-\mathcal{K}^{-1} {k}({x}) & I
\end{bmatrix} 
\right) \\
&= 
\frac{1}{\det \mathcal{K}} \, 
\det
\begin{bmatrix}
K({x},{x}) & {k}({x})^{T} \\
{k}({x}) & \mathcal{K}
\end{bmatrix}.
\end{align*}
\end{proof}

\noindent
Next, 
for a positive integer $r$ we define $\kappa_{r}$ by 
\begin{align}
\kappa_{r} := \max_{\mathcal{X}_{r} = \{ x_{1}, \ldots, x_{r} \} \subset \Omega} 
\det (K(x_{i}, x_{j}))_{ij}. 
\end{align}
We call a maximizer providing the value of $\kappa_{r}$ a set of Fekete type points, 
because it resembles the set of the Fekete points that maximizes the determinant of a Vandermonde matrix. 
Then, for any $x \in \Omega$ we have
\begin{align}
P_{K, \mathcal{X}_{n}}({x})^{2}
\leq \frac{\kappa_{n+1}}{\det \mathcal{K}}, 
\label{eq:ub_wce_by_Fekete}
\end{align}
in which the square root of the RHS gives an upper bound of the worst case error in~\eqref{eq:original_obj_func}. 
Therefore, we consider the maximization of $\det \mathcal{K}$ with respect to 
$\mathcal{X}_{n} = \{ x_{1}, \ldots, x_{n} \}$. 
However, because this maximization problem is still difficult, we consider
\begin{enumerate}
\item
approximation of the determinant $\det \mathcal{K}$ by using the Mercer expansion in~\eqref{eq:MercerExpansion}, and
\item
approximate reduction of the maximization problem to a convex optimization problem. 
\end{enumerate}
We show these procedures in Sections~\ref{sec:app_det_Mercer} and~\ref{sec:reduce_convex}, 
respectively. 

%-----
\subsection{Approximation of the determinant $\det \mathcal{K}$}
\label{sec:app_det_Mercer}

Based on Proposition~\ref{prop:pow_func_expr} 
and the expansion of the kernel $K$ in~\eqref{eq:MercerExpansion}, 
we provide an approximation of $P_{K, \mathcal{X}_{n}}(x)$. 
By truncating the expansion, we have
\begin{align}
\mathcal{K}
& \approx 
\left( \sum_{\ell = 1}^{n} \lambda_{\ell} \, \varphi_{\ell}(x_{j}) \, \varphi_{\ell}(x_{i}) \right)_{ij} \notag \\
& = 
\begin{bmatrix}
\lambda_{1} \varphi_{1}(x_{1}) & \lambda_{2} \varphi_{2}(x_{1}) & \cdots & \lambda_{n} \varphi_{n}(x_{1}) \\
\lambda_{1} \varphi_{1}(x_{2}) & \lambda_{2} \varphi_{2}(x_{2}) & \cdots & \lambda_{n} \varphi_{n}(x_{2}) \\
\vdots & \vdots & \ddots & \vdots \\
\lambda_{1} \varphi_{1}(x_{n}) & \lambda_{2} \varphi_{2}(x_{n}) & \cdots & \lambda_{n} \varphi_{n}(x_{n}) 
\end{bmatrix}
\begin{bmatrix}
\varphi_{1}(x_{1}) & \varphi_{1}(x_{2}) & \cdots & \varphi_{1}(x_{n}) \\
\varphi_{2}(x_{1}) & \varphi_{2}(x_{2}) & \cdots & \varphi_{2}(x_{n}) \\
\vdots & \vdots & \ddots & \vdots \\
\varphi_{n}(x_{1}) & \varphi_{n}(x_{2}) & \cdots & \varphi_{n}(x_{n}) 
\end{bmatrix} \notag \\
& = 
\begin{bmatrix}
\varphi_{1}(x_{1}) & \varphi_{2}(x_{1}) & \cdots & \varphi_{n}(x_{1}) \\
\varphi_{1}(x_{2}) & \varphi_{2}(x_{2}) & \cdots & \varphi_{n}(x_{2}) \\
\vdots & \vdots & \ddots & \vdots \\
\varphi_{1}(x_{n}) & \varphi_{2}(x_{n}) & \cdots & \varphi_{n}(x_{n}) 
\end{bmatrix}
\begin{bmatrix}
\lambda_{1} & & & \\
 & \lambda_{2} & & \\
 & & \ddots & \\
 & & & \lambda_{n} 
\end{bmatrix}
\begin{bmatrix}
\varphi_{1}(x_{1}) & \varphi_{1}(x_{2}) & \cdots & \varphi_{1}(x_{n}) \\
\varphi_{2}(x_{1}) & \varphi_{2}(x_{2}) & \cdots & \varphi_{2}(x_{n}) \\
\vdots & \vdots & \ddots & \vdots \\
\varphi_{n}(x_{1}) & \varphi_{n}(x_{2}) & \cdots & \varphi_{n}(x_{n}) 
\end{bmatrix}. 
\end{align}
Then, letting
\begin{align}
\Phi_{n}(x_{1}, x_{2}, \ldots, x_{n})
= 
\begin{bmatrix}
\varphi_{1}(x_{1}) & \varphi_{1}(x_{2}) & \cdots & \varphi_{1}(x_{n}) \\
\varphi_{2}(x_{1}) & \varphi_{2}(x_{2}) & \cdots & \varphi_{2}(x_{n}) \\
\vdots & \vdots & \ddots & \vdots \\
\varphi_{n}(x_{1}) & \varphi_{n}(x_{2}) & \cdots & \varphi_{n}(x_{n}) 
\end{bmatrix}, 
\label{eq:approx_matrix_Phi}
\end{align}
we have
\begin{align}
\det \mathcal{K}
\approx	
(\lambda_{1} \lambda_{2} \cdots \lambda_{n}) \left( \det \Phi_{n}(x_{1}, x_{2}, \ldots, x_{n}) \right)^{2}. 
\label{eq:approx_denom_n}
\end{align}

%-----
\subsection{Approximate reduction to a convex optimization problem}
\label{sec:reduce_convex}

Based on the approximation in~\eqref{eq:approx_denom_n}, 
we consider the optimization problem 
\begin{align}
\text{maximize} 
\quad 
\left( \det \Phi_{n}(x_{1}, \ldots, x_{n}) \right)^{2}
\qquad 
\text{subject to}
\quad 
(x_{1}, \ldots , x_{n}) \in \Omega^{n}. 
\label{eq:max_det_Phi_n}
\end{align}
First, we consider the equivalent form of this problem as shown below. 
\begin{thm}
Problem \eqref{eq:max_det_Phi_n} is equivalent to the problem given by
\begin{align}
\begin{array}{ll}
\text{maximize} 
& \quad
\displaystyle
\det
\left(
\sum_{k = 1}^{n} 
\begin{bmatrix}
\varphi_{1}(x_{k}) \\
\vdots \\
\varphi_{n}(x_{k}) \\
\end{bmatrix}
\begin{bmatrix}
\varphi_{1}(x_{k}) & 
\cdots & 
\varphi_{n}(x_{k}) 
\end{bmatrix}
\right) \\
\text{subject to}
& \quad 
\LARGE \mathstrut
(x_{1}, \ldots , x_{n}) \in \Omega^{n}. 
\end{array}
\label{eq:equiv_det_opt}
\end{align}
\end{thm}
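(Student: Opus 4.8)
The plan is to show that the two objective functions coincide identically on the common feasible set $\Omega^{n}$, so that the two problems share the same optimal value and the same set of maximizers. The crux is to recognize the sum of rank-one matrices inside the determinant in \eqref{eq:equiv_det_opt} as a Gram-type product of the matrix $\Phi_{n}$ defined in \eqref{eq:approx_matrix_Phi}.

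First I would introduce, for each $k = 1, \ldots, n$, the column vector
\begin{align}
v_{k} = (\varphi_{1}(x_{k}), \ldots, \varphi_{n}(x_{k}))^{T}
\notag
\end{align}
and observe that, by the definition of $\Phi_{n}(x_{1}, \ldots, x_{n})$ in \eqref{eq:approx_matrix_Phi}, the vector $v_{k}$ is precisely the $k$-th column of $\Phi_{n}$. Consequently the matrix appearing under the determinant in \eqref{eq:equiv_det_opt}, namely $\sum_{k=1}^{n} v_{k} v_{k}^{T}$, is exactly the product $\Phi_{n} \Phi_{n}^{T}$, since for any square matrix with columns $v_{1}, \ldots, v_{n}$ one has the identity $\Phi_{n} \Phi_{n}^{T} = \sum_{k=1}^{n} v_{k} v_{k}^{T}$.

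Next I would invoke the multiplicativity of the determinant together with $\det(\Phi_{n}^{T}) = \det(\Phi_{n})$ to obtain
\begin{align}
\det\left( \sum_{k=1}^{n} v_{k} v_{k}^{T} \right)
= \det(\Phi_{n} \Phi_{n}^{T})
= \det(\Phi_{n}) \det(\Phi_{n}^{T})
= \left( \det \Phi_{n}(x_{1}, \ldots, x_{n}) \right)^{2}.
\notag
\end{align}
This shows that the objective function of \eqref{eq:equiv_det_opt} equals the objective function of \eqref{eq:max_det_Phi_n} at every point $(x_{1}, \ldots, x_{n})$. Since the constraint $(x_{1}, \ldots, x_{n}) \in \Omega^{n}$ is common to both problems, they are equivalent.

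There is essentially no analytic obstacle here; the only point requiring care is the index bookkeeping, namely confirming that $v_{k}$ is the $k$-th \emph{column} (not row) of $\Phi_{n}$, so that the outer-product sum yields $\Phi_{n} \Phi_{n}^{T}$ rather than $\Phi_{n}^{T} \Phi_{n}$. Because $\Phi_{n}$ is square these two products have the same determinant in any case, but the identification with the column factorization is what makes the equivalence transparent and exact rather than merely approximate.
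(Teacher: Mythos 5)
Your proof is correct and follows essentially the same route as the paper: identifying the sum of rank-one outer products with $\Phi_{n}\Phi_{n}^{T}$ (the paper writes out the matrix product entrywise, arriving at the $(i,j)$ entry $\sum_{k}\varphi_{i}(x_{k})\varphi_{j}(x_{k})$) and then using $\det(\Phi_{n}\Phi_{n}^{T}) = (\det\Phi_{n})^{2}$. Nothing is missing.
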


\begin{proof}
The conclusion is derived from the following relations:
\begin{align}
& \left( \det \Phi_{n}(x_{1}, \ldots, x_{n}) \right)^{2} \notag \\
& = \det \left( \Phi_{n}(x_{1}, \ldots, x_{n}) \, \Phi_{n}(x_{1}, \ldots, x_{n})^{T} \right) \notag \\
& = 
\det \left(
\begin{bmatrix}
\varphi_{1}(x_{1}) & \varphi_{1}(x_{2}) & \cdots & \varphi_{1}(x_{n}) \\
\varphi_{2}(x_{1}) & \varphi_{2}(x_{2}) & \cdots & \varphi_{2}(x_{n}) \\
\vdots & \vdots & \ddots & \vdots \\
\varphi_{n}(x_{1}) & \varphi_{n}(x_{2}) & \cdots & \varphi_{n}(x_{n}) 
\end{bmatrix}
\begin{bmatrix}
\varphi_{1}(x_{1}) & \varphi_{2}(x_{1}) & \cdots & \varphi_{n}(x_{1}) \\
\varphi_{1}(x_{2}) & \varphi_{2}(x_{2}) & \cdots & \varphi_{n}(x_{2}) \\
\vdots & \vdots & \ddots & \vdots \\
\varphi_{1}(x_{n}) & \varphi_{2}(x_{n}) & \cdots & \varphi_{n}(x_{n}) 
\end{bmatrix}
\right) \notag \\
& = 
\det \left( \left(
\sum_{k = 1}^{n} \varphi_{i}(x_{k}) \, \varphi_{j}(x_{k})
\right)_{ij} \right). 
\end{align}
\end{proof}

Next, we consider approximation of Problem~\eqref{eq:equiv_det_opt}
because it is not easily tractable. 
We approximate it by preparing a sufficient number of candidate points 
$y_{1}, \ldots, y_{m} \in \Omega \ (n \ll m)$ and choosing $n$ points $y_{i_{1}} ,\ldots y_{i_{n}}$ which maximize
\[
\det
\left(
\sum_{\ell = 1}^{n} 
\begin{bmatrix}
\varphi_{1}(y_{i_{\ell}}) \\
\vdots \\
\varphi_{n}(y_{i_{\ell}}) \\
\end{bmatrix}
\begin{bmatrix}
\varphi_{1}(y_{i_{\ell}}) & 
\cdots & 
\varphi_{n}(y_{i_{\ell}}) 
\end{bmatrix}
\right). 
\]
Then, we can rewrite this problem as follows:
\begin{align}
\begin{array}{ll}
\text{maximize} 
& \quad
\displaystyle
\det
\left(
\sum_{j = 1}^{m}
w_{j} 
\begin{bmatrix}
\varphi_{1}(y_{j}) \\
\vdots \\
\varphi_{n}(y_{j}) \\
\end{bmatrix}
\begin{bmatrix}
\varphi_{1}(y_{j}) & 
\cdots & 
\varphi_{n}(y_{j}) 
\end{bmatrix}
\right) \\
\text{subject to}
& \quad 
\LARGE \mathstrut
w_{j} \in \{0,1\}, \ w_{1} + \cdots + w_{m} = n.
\end{array}
\label{eq:equiv_appr_Dopt}
\end{align}
In fact, 
Problem~\eqref{eq:equiv_appr_Dopt} is well-known as a \textit{$D$-optimal experimental design problem}. 

Finally, we relax the constraint $w_{j} \in \{ 0, 1 \}$ of Problem~\eqref{eq:equiv_appr_Dopt} 
because it is still difficult. 
More precisely, we consider the relaxed problem given by
\begin{align}
\begin{array}{ll}
\text{maximize} 
& \quad
\displaystyle
\det
\left(
\sum_{j = 1}^{m}
w_{j} 
\begin{bmatrix}
\varphi_{1}(y_{j}) \\
\vdots \\
\varphi_{n}(y_{j}) \\
\end{bmatrix}
\begin{bmatrix}
\varphi_{1}(y_{j}) & 
\cdots & 
\varphi_{n}(y_{j}) 
\end{bmatrix}
\right) \\
\text{subject to}
& \quad 
\LARGE \mathstrut
0 \leq w_{j} \leq 1, \ w_{1} + \cdots + w_{m} = n.
\end{array}
\label{eq:equiv_appr_Dopt_relax}
\end{align}
In fact, the objective function of this problem is log-concave with respect to a vector $w = (w_{1}, \ldots, w_{m})^{T}$ 
(see e.g.~\cite{bib:BorweinLewis_ConvAnal_2006}). 
Therefore, in principle, we can obtain an optimal solution $w^{\ast} \in [0,1]^{m}$ of Problem~\eqref{eq:equiv_appr_Dopt_relax} 
by using a standard solver for convex optimization. 
Furthermore, as shown in Section \ref{sec:num_expr}, 
we can reduce the solution $w^{\ast}$ to a 0-1 vector, 
which becomes an approximate solution of Problem~\eqref{eq:equiv_appr_Dopt}. 
Taking these facts into account, 
in Section~\ref{sec:reform_by_SOCP}, 
we begin with reformulating Problem~\eqref{eq:equiv_appr_Dopt_relax} 
to solve it efficiently. 

\begin{rem}
In general, 
the matrix of Equation~\eqref{eq:approx_matrix_Phi} can be singular for certain choices of the set $\mathcal{X}_{n} = \{ x_{1}, \ldots, x_{n} \}$. 
Hence there can exist a set $\{ y_{1}, \ldots, y_{m} \}$ such that the optimal value of Problem~\eqref{eq:equiv_appr_Dopt} is zero. 
We need to avoid such sets because it is not desirable in view of our purpose. 
However, we leave this issue as a theme for future work. 
In the results of the numerical experiments in Section~\ref{sec:num_expr} below, 
such sets do not seem to appear. 
\end{rem}

%can reduce the optimal solution to a $0$-$1$ approximate solution of Problem~\eqref{eq:equiv_appr_Dopt}
%by using the algorithm proposed recently in \cite{bib:SinghXie_AppDopt_SODA2018}. 
	
%--------------------
\section{Reformulation by a second order cone programming (SOCP) problem}
\label{sec:reform_by_SOCP}

To solve Problem~\eqref{eq:equiv_appr_Dopt_relax} efficiently, 
we use a reformulation of Problem~\eqref{eq:equiv_appr_Dopt_relax} 
as a second order cone programming (SOCP) 
problem~\cite{bib:Sagnol_OptDesign_SOCP_2011,bib:Sagnol_etal_OptDesign_SOCP_2015}. 
In this section, 
we decribe this reformulation, which is proposed by Sagnol and Harman~\cite{bib:Sagnol_etal_OptDesign_SOCP_2015}. 
To this end, 
we consider the general form of the relaxed $D$-optimal design given by 
\begin{align}
\begin{array}{lll}
\text{maximize} 
& \quad
\displaystyle
\det
\left(
\sum_{j = 1}^{m}
w_{j} 
{a}_{j} {a}_{j}^{T}
\right) & \\
\text{subject to}
& \quad 0 \leq w_{j} \leq 1 & (j = 1,\ldots, m) \\ 
& \quad w_{1} + \cdots + w_{m} = n & 
\end{array}
\label{eq:gen_appr_Dopt_relax}
\end{align}
where ${a}_{j} \in \mathbf{R}^{\ell} \ (j = 1,\ldots, m)$ for $\ell \leq m$.  
If we set $\ell = n$ and ${a}_{j} = (\varphi_{1}(y_{j}), \ldots , \varphi_{n}(y_{j}))^{T}$, 
Problem~\eqref{eq:gen_appr_Dopt_relax} is reduced to Problem~\eqref{eq:equiv_appr_Dopt_relax}. 

We describe a reformulation of Problem~\eqref{eq:gen_appr_Dopt_relax} 
as a SOCP problem in the same manner as that in~\cite{bib:Sagnol_etal_OptDesign_SOCP_2015}. 
It consists of the following two steps. 

\begin{itemize}
\item[(1)]
Rewriting the determinant in Problem~\eqref{eq:gen_appr_Dopt_relax} 
as the optimal value of an optimization problem
for a fixed $w = (w_{1}, \ldots, w_{m})^{T}$. 

\item[(2)]
Adding the constraint of $w$ to the optimization problem and expressing it as a SOCP problem. 

\end{itemize}

\noindent
In the following, we show the details of (1) and (2) in Sections~\ref{sec:opt_det} and~\ref{sec:SOCP_form}, respectively. 
In fact, Problem~\eqref{eq:gen_appr_Dopt_relax} is a simplified case of the problem treated in~\cite{bib:Sagnol_etal_OptDesign_SOCP_2015}. 
Therefore, we just describe the results for the reformulation in these sections 
and show their proofs in Appendix~\ref{sec:proofs_SOC} for readers' convenience. 

%----------
\subsection{Optimization problems yielding a determinant}
\label{sec:opt_det}

Let the matrix $H \in \mathbf{R}^{\ell \times m}$ be given by 
\begin{align}
H = (\sqrt{w_{1}} a_{1}, \ldots, \sqrt{w_{m}} a_{m})
\label{eq:def_mat_H}
\end{align}
for $w = (w_{1}, \ldots, w_{m})^{T} \geq 0$. 
Then, the determinant in Problem~\eqref{eq:gen_appr_Dopt_relax} is written as $\det (HH^{T})$. 
This determinant is given by the optimal value of an optimization problem as shown by the following theorem. 

\begin{thm}[Special case of Theorem 4.2 in {\cite{bib:Sagnol_etal_OptDesign_SOCP_2015}}]
\label{thm:opt_expr_det}
Let $\ell$ and $m$ be integers with $\ell \leq m$ and let $H \in \mathbf{R}^{\ell \times m}$ 
be given by \eqref{eq:def_mat_H}. 
Furthermore, 
let $\mathrm{OPT}_{1}(H)$ be the optimal value of the following optimization problem:
\begin{align}
\begin{array}{cll}
\displaystyle
\mathop{\text{maximize}}_{Q \in \mathbf{R}^{m \times \ell}, \, S \in \mathbf{R}^{\ell \times \ell}} \quad  & \displaystyle (\det S)^{2} & \\
\text{subject to} \quad & S \text{ is lower triangular}, & \\
 & HQ = S, & \\
 & \| Q \, \mathbf{e}_{i} \| \leq 1 & (i = 1,\ldots, \ell).
\end{array}
\label{eq:opt_expr_det}
\end{align}
Then $\mathrm{OPT}_{1}(H) = \det(HH^{T})$. 
\end{thm}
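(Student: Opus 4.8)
The plan is to prove the two inequalities $\mathrm{OPT}_{1}(H) \le \det(HH^{T})$ and $\mathrm{OPT}_{1}(H) \ge \det(HH^{T})$ separately, reducing everything to Hadamard's inequality via a single factorization of $H$. First I would dispose of the degenerate case: if $H$ does not have full row rank, then $\det(HH^{T}) = 0$, while $HQ$ has rank at most $\operatorname{rank}(H) < \ell$ for every $Q$, so $\det S = \det(HQ) = 0$ at any feasible point and both sides vanish. Assume therefore that $HH^{T}$ is positive definite, and let $L \in \mathbf{R}^{\ell \times \ell}$ be its Cholesky factor, $HH^{T} = LL^{T}$ with $L$ lower triangular and invertible. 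Setting $\tilde{Q} := L^{-1}H \in \mathbf{R}^{\ell \times m}$, one checks $\tilde{Q}\tilde{Q}^{T} = L^{-1}(HH^{T})L^{-T} = I_{\ell}$, so $\tilde{Q}$ has orthonormal rows and $H = L\tilde{Q}$; in particular $\det(HH^{T}) = (\det L)^{2}$.

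For the upper bound, take any feasible triple $(Q, S)$ and write $\det S = \det(HQ) = \det(L\,\tilde{Q}Q) = \det L \cdot \det(\tilde{Q}Q)$, where $\tilde{Q}Q \in \mathbf{R}^{\ell \times \ell}$. Since the rows of $\tilde{Q}$ are orthonormal, $\tilde{Q}^{T}\tilde{Q}$ is an orthogonal projection, so $\tilde{Q}$ is nonexpansive; combined with the constraint $\| Q\mathbf{e}_{i} \| \le 1$ this gives $\| \tilde{Q}Q\mathbf{e}_{i} \| = \| \tilde{Q}(Q\mathbf{e}_{i}) \| \le \| Q\mathbf{e}_{i} \| \le 1$ for each $i = 1, \ldots, \ell$. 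Hadamard's inequality applied to $\tilde{Q}Q$ then yields $|\det(\tilde{Q}Q)| \le \prod_{i=1}^{\ell} \| \tilde{Q}Q\mathbf{e}_{i} \| \le 1$, whence $(\det S)^{2} \le (\det L)^{2} = \det(HH^{T})$. Taking the supremum over feasible points gives $\mathrm{OPT}_{1}(H) \le \det(HH^{T})$. I would note that this bound never uses the lower-triangularity of $S$; that constraint is what makes the achievability explicit and is what matters for the SOCP reformulation in the next step.

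For the reverse inequality I would exhibit an optimal feasible point. Put $Q := \tilde{Q}^{T}$ and $S := L$. Then $HQ = L\tilde{Q}\tilde{Q}^{T} = L = S$ is lower triangular, and the columns of $Q$ are precisely the rows of $\tilde{Q}$, which are orthonormal, so $\| Q\mathbf{e}_{i} \| = 1 \le 1$. Thus $(Q, S)$ is feasible and attains $(\det S)^{2} = (\det L)^{2} = \det(HH^{T})$, giving $\mathrm{OPT}_{1}(H) \ge \det(HH^{T})$. Combined with the upper bound, this proves $\mathrm{OPT}_{1}(H) = \det(HH^{T})$.

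The only genuinely delicate point is the upper bound, and within it the observation that $\tilde{Q}$ acts as a contraction so that the column-norm constraint on $Q$ transfers to $\tilde{Q}Q$; once this is in place, Hadamard's inequality closes the argument immediately. Everything else is bookkeeping with the Cholesky factorization, and care is needed only to handle the rank-deficient case so that the factorization $H = L\tilde{Q}$ is legitimate.
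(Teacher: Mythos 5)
Your proof is correct, but it takes a genuinely different route from the paper's. The paper obtains the achievable point from a QR decomposition of $H^{T}$ (whose triangular factor is, up to signs on the diagonal, your Cholesky factor $L$), and proves the upper bound by chaining two inequalities: the Cauchy--Schwarz inequality for determinants, $(\det(HQ))^{2}\leq\det(HH^{T})\,\det(Q^{T}Q)$, followed by Hadamard's inequality applied to the Gram matrix $Q^{T}Q$ to get $\det(Q^{T}Q)\leq 1$. The determinantal Cauchy--Schwarz inequality is not free --- the paper proves it in Appendix B via the superadditivity $\det(C+D)\geq\det C+\det D$ for positive semidefinite matrices. You instead factor $H=L\tilde{Q}$ with $\tilde{Q}$ having orthonormal rows, observe that $\tilde{Q}^{T}\tilde{Q}$ is an orthogonal projection so $\tilde{Q}$ is nonexpansive, transfer the column-norm constraint from $Q$ to the square matrix $\tilde{Q}Q$, and apply Hadamard directly to $\tilde{Q}Q$. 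This bypasses the determinantal Cauchy--Schwarz inequality entirely, so your argument is more self-contained (classical Hadamard for square matrices is the only external input), and your explicit treatment of the rank-deficient case is cleaner than the paper's, which relies on the appendix lemmas to cover it implicitly. What the paper's route buys is that both of its key inequalities are quotable named results from a standard reference, and its QR-based construction is the one that carries over verbatim to the more general setting of Sagnol and Harman's original theorem. Your side remark that the upper bound never uses the lower-triangularity of $S$ is also accurate and matches the role that constraint plays in the paper (it is there only to pin down the achieving point and to feed the SOCP reformulation of Theorem~\ref{thm:SOCP_expr_det}).
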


\begin{thm}[Special case of Theorem 4.3 in {\cite{bib:Sagnol_etal_OptDesign_SOCP_2015}}]
\label{thm:SOCP_expr_det}
Let $\ell$ and $m$ be integers with $\ell \leq m$, 
let $a_{1}, \ldots, a_{m} \in \mathbf{R}^{\ell}$, and 
let $w = (w_{1}, \ldots, w_{m})^{T} \geq 0$. 
Furthermore, 
let $\mathrm{OPT}_{2}(\{ a_{i} \}, \, w)$ be the optimal value of the following optimization problem:
\begin{align}
\begin{array}{cll}
\displaystyle
\mathop{\text{maximize}}_{
\begin{subarray}{l}
Z = (z_{ij}), \, T = (t_{ij}) \in \mathbf{R}^{m \times \ell}, \\ 
G = (g_{ij}) \in \mathbf{R}^{\ell \times \ell}
\end{subarray}} \quad  & 
\displaystyle \prod_{j=1}^{\ell} g_{jj} & \\
\text{subject to} \quad & G \text{ is lower triangular}, & \\
 & (a_{1}, \ldots, a_{m}) \, Z = G, & \\
 & z_{ij}^{2} \leq t_{ij} w_{i} & (i = 1,\ldots, m, \, j=1,\ldots, \ell), \\
 & \displaystyle \sum_{i=1}^{m} t_{ij} \leq g_{jj} &  (j=1,\ldots, \ell). \\
\end{array}
\label{eq:opt_expr_SOCP}
\end{align}
Then $\mathrm{OPT}_{2}(\{ a_{i} \}, \, w) = \det \left( HH^{T} \right)$, where $H$ is given by \eqref{eq:def_mat_H}.
\end{thm}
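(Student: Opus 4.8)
The plan is to deduce this theorem from Theorem~\ref{thm:opt_expr_det}, which already identifies $\det(HH^T)$ with $\mathrm{OPT}_1(H)$. Writing $A = (a_1, \ldots, a_m) \in \mathbf{R}^{\ell \times m}$ and $D = \mathrm{diag}(w_1, \ldots, w_m)$, we have $H = A D^{1/2}$, so it suffices to prove $\mathrm{OPT}_2(\{a_i\}, w) = \mathrm{OPT}_1(H)$. Before comparing the two problems I would first eliminate the auxiliary variables $T = (t_{ij})$ in~\eqref{eq:opt_expr_SOCP}: for fixed $Z$ and $G$, the constraints $z_{ij}^2 \le t_{ij} w_i$ and $\sum_i t_{ij} \le g_{jj}$ admit some $T \ge 0$ if and only if $\sum_{i=1}^m z_{ij}^2 / w_i \le g_{jj}$ for each $j$ (the tightest choice being $t_{ij} = z_{ij}^2/w_i$, with the convention that $w_i = 0$ forces $z_{ij} = 0$). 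Thus~\eqref{eq:opt_expr_SOCP} is equivalent to maximizing $\prod_{j=1}^\ell g_{jj}$ over lower-triangular $G = AZ$ subject to $\sum_i z_{ij}^2/w_i \le g_{jj}$, and these constraints force $g_{jj} \ge 0$.

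For the inequality $\mathrm{OPT}_1(H) \le \mathrm{OPT}_2(\{a_i\}, w)$, I would start from a feasible pair $(Q, S)$ of~\eqref{eq:opt_expr_det} and build a feasible point of the reduced problem with the same objective. Setting $Z := D^{1/2} Q\,\mathrm{diag}(S_{11}, \ldots, S_{\ell\ell})$ and $G := S\,\mathrm{diag}(S_{11}, \ldots, S_{\ell\ell})$, one checks that $AZ = A D^{1/2} Q\,\mathrm{diag}(S_{jj}) = HQ\,\mathrm{diag}(S_{jj}) = G$, that $G$ is lower triangular, and that $g_{jj} = S_{jj}^2 \ge 0$. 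Since the $j$-th column of $Z$ is $S_{jj}$ times the $j$-th column of $D^{1/2}Q$, the key constraint reads $\sum_i z_{ij}^2/w_i = S_{jj}^2 \sum_i Q_{ij}^2 = S_{jj}^2 \|Q\mathbf{e}_j\|^2 \le S_{jj}^2 = g_{jj}$, using the column-norm bound $\|Q\mathbf{e}_j\| \le 1$ from~\eqref{eq:opt_expr_det}; setting $t_{ij} = z_{ij}^2/w_i$ then recovers a genuine feasible point of~\eqref{eq:opt_expr_SOCP}. Its objective is $\prod_j g_{jj} = \prod_j S_{jj}^2 = (\det S)^2$, so every value attained in~\eqref{eq:opt_expr_det} is attained in~\eqref{eq:opt_expr_SOCP}.

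The reverse inequality $\mathrm{OPT}_2 \le \mathrm{OPT}_1$ uses the inverse scaling: from a feasible $(Z, G)$ of the reduced problem with $g_{jj} > 0$, define $Q$ columnwise by $Q\mathbf{e}_j := g_{jj}^{-1/2} D^{-1/2} Z\mathbf{e}_j$ and $S := HQ$. Then $\|Q\mathbf{e}_j\|^2 = g_{jj}^{-1}\sum_i z_{ij}^2/w_i \le 1$, while $S = HQ = A Z\,\mathrm{diag}(g_{jj}^{-1/2}) = G\,\mathrm{diag}(g_{jj}^{-1/2})$ is lower triangular with $S_{jj} = \sqrt{g_{jj}}$, so $(\det S)^2 = \prod_j g_{jj}$ recovers the objective. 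Combining the two inequalities yields $\mathrm{OPT}_2 = \mathrm{OPT}_1 = \det(HH^T)$.

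The main obstacle I expect is the careful bookkeeping of the degenerate cases rather than any deep idea: the divisions by $\sqrt{w_i}$ and $\sqrt{g_{jj}}$ must be justified, so I would isolate the support $\{ i : w_i > 0 \}$ (outside which every $z_{ij}$ is forced to vanish) and dispose of vanishing diagonal entries $S_{jj} = 0$ or $g_{jj} = 0$ by the boundary observation that the relevant product is then zero on both sides. Everything else amounts to the two mutually inverse diagonal rescalings that transfer the normalization $\|Q\mathbf{e}_j\| \le 1$ into the diagonal of $G$.
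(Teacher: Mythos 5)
Your proposal is correct and follows essentially the same route as the paper: both directions use exactly the paper's diagonal rescalings ($z_{ij}=\sqrt{w_i}\,s_{jj}q_{ij}$, $g_{ij}=s_{jj}s_{ij}$ one way and $q_{ij}=z_{ij}/\sqrt{w_i g_{jj}}$, $s_{ij}=g_{ij}/\sqrt{g_{jj}}$ the other), with the same handling of the supports $\{i: w_i>0\}$ and $\{j: g_{jj}>0\}$. Your preliminary elimination of $T$ via $t_{ij}=z_{ij}^2/w_i$ is only a cosmetic streamlining of the paper's explicit choice $t_{ij}=s_{jj}^2q_{ij}^2$, and like the paper it implicitly reads the rotated-cone constraint as including $t_{ij}\ge 0$.
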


%----------
\subsection{SOCP form of Problem~\eqref{eq:gen_appr_Dopt_relax}}
\label{sec:SOCP_form}

We show that Problem~\eqref{eq:gen_appr_Dopt_relax} can be expressed 
as a SOCP problem. 
To this end, 
we start with showing that the product of the variables 
in the objective function of Problem~\eqref{eq:opt_expr_SOCP} 
is represented as the optimal value of an optimization problem with first or second order constraints.

\begin{thm}[{\cite{bib:AlizadehGoldfarb_SOCP_2003,bib:Lobo_etal_SOCP_1998}}]
\label{thm:prod_SOCP}
Let $\ell \geq 2$ be an integer and let $h_{1}, \ldots , h_{\ell} \in \mathbf{R}$ be non-negative numbers. 
Furthermore, 
let $p$ be the integer with $2^{p-1} < \ell \leq 2^{p}$. 
We consider the optimization problem given by
\begin{align}
\begin{array}{cll}
\displaystyle
\mathop{\mathrm{maximize}} \quad  & u_{1} & \\
\text{subject to} \quad & u_{i}^{2} \leq u_{2i} u_{2i+1} & (i=1, \ldots, 2^{p-1}-1), \\
 & u_{i}^{2} \leq g_{2i - 2^{p} + 1} g_{2i - 2^{p} + 2} & (i=2^{p-1}, \ldots, 2^{p-1} + \ell/2 - 1), \\
 & u_{i}^{2} \leq u_{1}^{2} & (i=2^{p-1} + \ell/2, \ldots, 2^{p}-1), \\
 & u_{i} \geq 0 & (i = 1,\ldots, 2^{p}-1)
\end{array}
\label{eq:product_SOCP_even}
\end{align}
in the case that $\ell$ is even, and 
\begin{align}
\begin{array}{cll}
\displaystyle
\mathop{\mathrm{maximize}} \quad  & u_{1} & \\
\text{subject to} \quad & u_{i}^{2} \leq u_{2i} u_{2i+1} & (i=1, \ldots, 2^{p-1}-1), \\
 & u_{i}^{2} \leq g_{2i - 2^{p} + 1} g_{2i - 2^{p} + 2} & (i=2^{p-1}, \ldots, 2^{p-1} + (\ell-3)/2), \\
 & u_{i}^{2} \leq g_{2i - 2^{p} + 1} u_{1} & (i=2^{p-1} + (\ell-1)/2), \\ 
 & u_{i}^{2} \leq u_{1}^{2} & (i=2^{p-1} + (\ell+1)/2, \ldots, 2^{p}-1), \\
 & u_{i} \geq 0 & (i = 1,\ldots, 2^{p}-1) 
\end{array}
\label{eq:product_SOCP_odd}
\end{align}
in the case that $\ell$ is odd.
Then, its optimal value is equal to the geometric mean $\left( \prod_{j=1}^{\ell} g_{j} \right)^{1/\ell}$. 
\end{thm}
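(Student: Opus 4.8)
The plan is to read the constraint graph as a balanced binary tree with $2^{p}$ leaves and $2^{p}-1$ internal nodes indexed $1,\ldots,2^{p}-1$ in heap order, so node $i$ has children $2i$ and $2i+1$ and lies at depth $k=\lfloor\log_{2} i\rfloor$. Under this reading the constraints say that each node value is at most the geometric mean of the leaves beneath it. The $\ell$ numbers $g_{1},\ldots,g_{\ell}$ occupy $\ell$ of the leaves, while the remaining $2^{p}-\ell$ leaves are \emph{padded} by the root value itself: this is exactly what the constraints $u_{i}^{2}\le u_{1}^{2}$ (and, in the odd case, the single mixed constraint $u_{i}^{2}\le g_{2i-2^{p}+1}\,u_{1}$) encode. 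Writing $G:=\bigl(\prod_{j=1}^{\ell}g_{j}\bigr)^{1/\ell}$, I would prove the optimal value equals $G$ by establishing the upper bound $u_{1}\le G$ for every feasible point and then exhibiting a feasible point with $u_{1}=G$.

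For the upper bound I would produce a dual certificate by assigning node $i$ the multiplier $\mu_{i}=2^{-\lfloor\log_{2} i\rfloor}$ and forming the product of all constraints, the one at node $i$ raised to the power $\mu_{i}$ (equivalently, a weighted sum of their logarithms). Because a child $j\ge2$ sits one level below its parent, its left-hand exponent $2\mu_{j}$ matches the exponent $\mu_{\lfloor j/2\rfloor}=2\mu_{j}$ with which it occurs on the right, so every factor $u_{j}$ with $j\ge2$ cancels. Collecting the survivors leaves $u_{1}^{2}\le u_{1}^{\,2-2\ell/2^{p}}\,\bigl(\prod_{j=1}^{\ell}g_{j}\bigr)^{2/2^{p}}$, where the $u_{1}$-power records the $2^{p-1}-\ell/2$ padded nodes and each $g_{j}$ appears once from a leaf-level constraint. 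Raising to the power $2^{p-1}$ yields $u_{1}^{\ell}\le\prod_{j=1}^{\ell}g_{j}$, i.e. $u_{1}\le G$; the odd case is identical once one notes the lone mixed node supplies exactly one $g$-factor and one $u_{1}$-factor.

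For attainability I would place the leaf values $g_{1},\ldots,g_{\ell}$ together with $2^{p}-\ell$ copies of $G$, and set each $u_{i}$ to be the geometric mean of the $2^{p-k}$ leaves under node $i$. Because the tree is balanced the two child subtrees always have equal size, so $u_{i}^{2}=u_{2i}u_{2i+1}$ holds with equality at every internal node; the bottom-level constraints hold with equality by construction; and the padded constraints reduce to $G^{2}=u_{1}^{2}$ since the root value is $\bigl(\prod_{j}g_{j}\cdot G^{2^{p}-\ell}\bigr)^{1/2^{p}}=\bigl(G^{\ell}G^{2^{p}-\ell}\bigr)^{1/2^{p}}=G$. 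Hence $u_{1}=G$ is feasible and matches the upper bound, giving the stated optimal value.

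The step I expect to be the main obstacle is the bookkeeping that makes the weighted product telescope: one must track the depth-dependent exponents and confirm that the padded and mixed nodes contribute precisely the powers of $u_{1}$ and of the $g_{j}$ needed to reach $u_{1}^{\ell}\le\prod g_{j}$. A secondary technical point is degenerate data with some $g_{j}=0$ (so $G=0$): there the cancellation argument, which silently divides by the $u_{j}$, must be justified. I would handle this by observing that if $u_{1}>0$ then the constraints force every internal node and every participating $g_{j}$ to be strictly positive, so the logarithmic manipulation is legitimate; and if instead some $g_{j}=0$, the same propagation shows $u_{1}=0=G$, so the bound holds trivially.
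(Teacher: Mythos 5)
Your proposal is correct and follows essentially the same route as the paper: the weighted product with depth-dependent multipliers $2^{-\lfloor\log_2 i\rfloor}$ is, up to a global rescaling of the exponents, exactly the paper's chain of inequalities raised to the powers $2^{p-1-k}$ and multiplied together, and your attainability construction (geometric means of the leaves, with the padded leaves set to $u_1=G$) coincides with the paper's recursive assignment $u_i=\sqrt{u_{2i}u_{2i+1}}$. Your explicit treatment of the degenerate case $g_j=0$ and of the positivity needed to cancel the common factors matches the paper's reduction to $g_j>0$ and $u_1>0$.
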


By Theorems~\ref{thm:SOCP_expr_det} and~\ref{thm:prod_SOCP}, 
we can show that the value $(\det (HH^{T}))^{1/\ell}$ is equal to the optimal value of Problem~\eqref{eq:opt_expr_SOCP} 
whose objective function is rewritten in the form of 
Problem~\eqref{eq:product_SOCP_even} or~\eqref{eq:product_SOCP_odd}. 
Consequently, 
regarding $w_{1}, \ldots, w_{m}$ as variables, 
we obtain the SOCP form of Problem~\eqref{eq:gen_appr_Dopt_relax}. 
For simplicity, we show only the case that $\ell$ is even as follows: 
\begin{align}
\begin{array}{cll}
\displaystyle
\mathop{\text{maximize}}_{
\begin{subarray}{l}
Z = (z_{ij}) \in \mathbf{R}^{m \times \ell}, \\ 
T = (t_{ij}) \in \mathbf{R}^{m \times \ell}, \\ 
\{ \tilde{g}_{1}, \ldots, \tilde{g}_{\ell} \} \subset \mathbf{R}, \\ 
\{u_{1}, \ldots, u_{2^{p}-1} \} \subset \mathbf{R}, \\
\{w_{1}, \ldots, w_{m} \} \subset \mathbf{R} 
\end{subarray}} & 
u_{1} & \\
\text{subject to} & (a_{1}, \ldots, a_{m}) \, Z = 
\begin{bmatrix}
\tilde{g}_{1} & 0 & \cdots & 0 \\
\ast & \tilde{g}_{2} & \ddots & \vdots \\
\ast & \ast & \ddots & 0 \\
\ast & \ast & \ast & \tilde{g}_{\ell} \\ 
\end{bmatrix}, & \\
 & z_{ij}^{2} \leq t_{ij} w_{i} & (i = 1,\ldots, m, \, j=1,\ldots, \ell), \\
 & \displaystyle \sum_{i=1}^{m} t_{ij} \leq \tilde{g}_{j} &  (j=1,\ldots, \ell). \\
 & u_{i}^{2} \leq u_{2i} u_{2i+1} & (i=1, \ldots, 2^{p-1}-1), \\
 & u_{i}^{2} \leq \tilde{g}_{2i - 2^{p} + 1} \tilde{g}_{2i - 2^{p} + 2} & (i=2^{p-1}, \ldots, 2^{p-1} + \ell/2 - 1), \\
 & u_{i}^{2} \leq u_{1}^{2} & (i=2^{p-1} + \ell/2, \ldots, 2^{p}-1), \\
 & u_{i} \geq 0 & (i = 1,\ldots, 2^{p}-1) \\
 & 0 \leq w_{j} \leq 1 & (j = 1,\ldots, m), \\ 
 & w_{1} + \cdots + w_{m} = n, & 
\end{array}
\label{eq:opt_expr_SOCP_pre_final}
\end{align}
whose optimal value is the $\ell$-th root of that of Problem~\eqref{eq:gen_appr_Dopt_relax}.
We can similarly deal with the case that $\ell$ is odd. 
Therefore, 
we can obtain the optimal solution 
$w^{\ast} = (w_{1}^{\ast}, \ldots, w_{m}^{\ast})^{T}$ of Problem~\eqref{eq:gen_appr_Dopt_relax} 
by solving Problem~\eqref{eq:opt_expr_SOCP_pre_final} 
with a SOCP solver.

%--------------------
\section{Algorithms and numerical experiments}
\label{sec:num_expr}

In this section, 
we propose algorithms for generating point sets and apply them to 
Examples~\ref{ex:kernel_Brownian}, \ref{ex:kernel_Spherical}, and \ref{ex:kernel_Gaussian} in Section~\ref{sec:prelim}. 

%----------
\subsection{Algorithms for generating point sets}

We propose two algorithms: Algorithms~\ref{alg:gen_points} and~\ref{alg:seq_gen_points}. 
In Algorithm~\ref{alg:gen_points}, 
we solve SOCP problem~\eqref{eq:opt_expr_SOCP_pre_final} with 
$a_{j} = (\varphi_{1}(y_{j}), \ldots, \varphi_{n}(y_{j}))^{T}$ and $\ell = n$
to obtain the optimal solution $w^{\ast} = (w_{1}^{\ast}, \ldots, w_{m}^{\ast})^{T}$. 
Then, we choose the ``local maxima'' of $w^{\ast}$, 
which mean the components $w_{j}^{\ast}$ satisfying $w_{j}^{\ast} \geq w_{k}^{\ast}$ 
for any ``neighbors'' $k \in \mathcal{N}_{j} := \{ k \mid y_{k} \text{ is a neighbor of } y_{j} \text{ in } \Omega \}$. 
We determine the neighbors of $y_{j}$
according to the geometric property of $\Omega$ and the arrangement of $y_{1}, \ldots , y_{m} \in \Omega$ 
as shown in Section~\ref{sec:num_methods}. 

We propose Algorithm~\ref{alg:seq_gen_points} to show that 
we can invent a sequential algorithm based on the SOCP formulation. 
To generate $n'$ points, 
the algorithm requires a sequence of positive integers $\{ n_{i} \}_{i=1}^{I}$ with $n_{1} + \cdots + n_{I} = n'$. 
Then, the algorithm consists of $I$ steps and new sampling points are added in each step. 
The integer $n_{i}$ indicates the number of the new sampling points added in the $i$-th step. 
To achieve this procedure, 
in the $i$-th step
we solve SOCP problem~\eqref{eq:opt_expr_SOCP_pre_final} with 
$n = n_{1} + \cdots + n_{i}$ and the weights $w_{j}$ fixed to $1$ for $j \in \mathcal{W}$, 
where $\mathcal{W}$ is the set of the indices corresponding to the points $y_{j}$ chosen in the previous steps. 
We can set the SOCP problems easily because we have only to add the linear constraints $w_{j} = 1 \ (j \in \mathcal{W})$. 
Algorithm~\ref{alg:seq_gen_points} can be regarded as a generalization of greedy algorithms that choose points one by one. 

\renewcommand{\algorithmicrequire}{\textbf{Input:}}
\renewcommand{\algorithmicensure}{\textbf{Output:}}
\begin{algorithm}[t]
\caption{Generation of sampling points}         
\label{alg:gen_points}                          
\begin{algorithmic}
\REQUIRE 
Region $\Omega \subset \mathbf{R}^{d}$, \
Orthonormal system $\{ \varphi_{1}, \ldots, \varphi_{n} \}$, \
Points $y_{1}, \ldots, y_{m} \in \Omega$, \ 
Neighborhood sets $\{ \mathcal{N}_{j} \}_{j=1}^{m}$, \ 
Integer $n$.
\ENSURE 
Points $x_{1}, \ldots, x_{n} \in \Omega$.
%-----
\FOR{$j = 1$ to $m$}
\STATE $a_{j} \leftarrow (\varphi_{1}(y_{j}), \ldots, \varphi_{n}(y_{j}))^{T}$
\ENDFOR
\STATE Solve SOCP problem~\eqref{eq:opt_expr_SOCP_pre_final} with $\ell = n$ to obtain the optimal solution $w^{\ast} = (w_{1}^{\ast}, \ldots, w_{m}^{\ast})^{T}$
\STATE $\mathcal{J} = \emptyset$
\FOR{$j = 1$ to $m$}
	\IF{$w_{j}^{\ast} \geq w_{k}^{\ast}$ for any $k \in \mathcal{N}_{j}$}
	\STATE $\mathcal{J} \leftarrow \mathcal{J} \cup \{ j \}$
	\ENDIF
\ENDFOR
\STATE Sort $\{ w_{j}^{\ast} \}_{j \in \mathcal{J}}$ in descending order: $w_{j_{1}}^{\ast} \geq w_{j_{2}}^{\ast} \geq \cdots \geq w_{j_{|\mathcal{J}|}}^{\ast}$
\IF{$|\mathcal{J}| \geq n$}
	\STATE $x_{k} \leftarrow y_{j_{k}} \ (k=1,\ldots, n)$
\ELSE
	\PRINT{``Error''}
\ENDIF
\end{algorithmic}
\end{algorithm}

\begin{algorithm}[t]
\caption{Sequential generation of sampling points}         
\label{alg:seq_gen_points}
\begin{algorithmic}
\REQUIRE 
Region $\Omega \subset \mathbf{R}^{d}$, \
Orthonormal system $\{ \varphi_{1}, \ldots, \varphi_{n'} \}$, \
Points $y_{1}, \ldots, y_{m} \in \Omega$, \ 
Neighborhood sets $\{ \mathcal{N}_{j} \}_{j=1}^{m}$, \ 
Sequence of positive integers $\{ n_{i} \}_{i=1}^{I}$ with $n_{1} + \cdots + n_{I} = n'$.
\ENSURE 
Points $x_{1}, \ldots, x_{n'} \in \Omega$.
%-----
\STATE $\mathcal{W} = \emptyset$
\FOR{$i = 1$ to $I$}
\STATE $n = n_{1} + \cdots + n_{i}$
\FOR{$j = 1$ to $m$}
\STATE $a_{j} \leftarrow (\varphi_{1}(y_{j}), \ldots, \varphi_{n}(y_{j}))^{T}$
\ENDFOR
\STATE Solve SOCP problem~\eqref{eq:opt_expr_SOCP_pre_final} with $\ell = n$ and the additional constraints $w_{j} = 1 \ (j \in \mathcal{W})$
to obtain the optimal solution $w^{\ast} = (w_{1}^{\ast}, \ldots, w_{m}^{\ast})^{T}$
\STATE $\mathcal{J} = \emptyset$
\FOR{$j = 1$ to $m$}
	\IF{$w_{j}^{\ast} \geq w_{k}^{\ast}$ for any $k \in \mathcal{N}_{j}$}
	\STATE $\mathcal{J} \leftarrow \mathcal{J} \cup \{ j \}$
	\ENDIF
\ENDFOR
\STATE Sort $\{ w_{j}^{\ast} \}_{j \in \mathcal{J}}$ in descending order: $w_{j_{1}}^{\ast} \geq w_{j_{2}}^{\ast} \geq \cdots \geq w_{j_{|\mathcal{J}|}}^{\ast}$
\IF{$|\mathcal{J}| \geq n$}
	\STATE $x_{k} \leftarrow y_{j_{k}} \ (k=1,\ldots, n)$
	\STATE $\mathcal{W} \leftarrow \mathcal{W} \cup \{ j_{1}, \ldots, j_{n} \}$
\ELSE
	\PRINT{``Error''}
\ENDIF
\ENDFOR
\end{algorithmic}
\end{algorithm}

%----------
\subsection{Methods of numerical experiments for Algorithms~\ref{alg:gen_points} and~\ref{alg:seq_gen_points}}
\label{sec:num_methods}

For numerical experiments, 
we take Examples~\ref{ex:kernel_Brownian}, \ref{ex:kernel_Spherical}, and~\ref{ex:kernel_Gaussian} 
in Section~\ref{sec:prelim}. 
For Example~\ref{ex:kernel_Spherical}, we set $\gamma = 0.1$. 
For Example~\ref{ex:kernel_Gaussian}, we set $\alpha = \varepsilon = 1$ and consider the four versions: 
\begin{description}
\setlength{\parskip}{0pt}
\setlength{\parskip}{0pt}
\item[3-1.] $d = 1$, $\Omega = [-1,1]$, 
\item[3-2.] $d = 2$, $\Omega = [-1,1]^{2}$, 
\item[3-3.] $d = 2$, $\Omega = \triangle := \{ (x_{1}, x_{2}) \in [-1,1]^{2} \mid x_{1} + x_{2} \geq 0 \}$,
\item[3-4.] $d = 2$, $\Omega = D := \{ (x_{1}, x_{2}) \in [-1,1]^{2} \mid x_{1}^{2}+x_{2}^{2} \leq 1 \}$.
\end{description}

We chose the candidate points $y_{1}, \ldots, y_{m} \in \Omega$ and the neighborhood sets $\mathcal{N}_{j}$ for these examples. 
For Example~\ref{ex:kernel_Brownian}, 
we took $m=250$, $y_{j} = (j-1)/(m-1) \ (j=1,\ldots, m)$ on $[0,1]$, and $\mathcal{N}_{j} = \{ j-1,  j+1 \} \cap \{1, \ldots, m \}$. 
For Example~\ref{ex:kernel_Spherical}, 
we took $k=25$, $m = (k-1)(k-2)+2 = 554$, and
and the angles 
$\theta_{p} = \pi (p-1)/(k-1) \ (p=1,\ldots, k)$ and 
$\phi_{q} = 2\pi (q-1)/(k-1) \ (q=1,\ldots, k-1)$ to generate the points
\begin{align}
y_{j} = (\sin \theta_{p_{j}} \cos \phi_{q_{j}}, \, \sin \theta_{p_{j}} \sin \phi_{q_{j}}, \,  \cos \theta_{p_{j}}) \in S^{2}, 
\label{eq:sphere_y_j}
\end{align}
where
\begin{align}
(p_{j}, q_{j}) =
\begin{cases}
(1,1) & (j=1), \\
(k,1) & (j=m), \\
\left( \left \lfloor (j-2)/(k-1) \right \rfloor + 2, \, \mathop{\mathrm{mod}}(j-2, k-1)+1 \right) & (j \neq 1,m).
\end{cases}
\notag
\end{align}
Here $\mathop{\mathrm{mod}}(a,b)$ denotes the remainder after division of $a$ by $b$. 
Furthermore, we defined $\mathcal{N}_{j}$ by 
\begin{align}
\mathcal{N}_{j} = 
\begin{cases}
\{ 2, \ldots, k \} & (j=1), \\
\{ m-(k-1), \ldots, m-1 \} & (j=m), \\
\{ j + l_{j} , j + r_{j}, j-(k-1), j + (k+1) \} & (j \neq 1, m), 
\end{cases}
\label{eq:sphere_N_j}
\end{align}
where 
\begin{align}
& l_{j} = 
\begin{cases}
-1 & (\mathop{\mathrm{mod}}(j-2, k-1) \neq 0), \\
+(k-2) & (\mathop{\mathrm{mod}}(j-2, k-1) = 0),
\end{cases} \notag \\
& r_{j} = 
\begin{cases}
+1 & (\mathop{\mathrm{mod}}(j-2, k-1) \neq k-2), \\
-(k-2) & (\mathop{\mathrm{mod}}(j-2, k-1) = k-2).
\end{cases}
\notag
\end{align}
For Example~\ref{ex:kernel_Gaussian}-1, 
we took $m=250$, $y_{j} = -1 + 2(j-1)/(m-1) \ (j=1,\ldots, m)$ on $[-1,1]$, and $\mathcal{N}_{j} = \{ j-1,  j+1 \} \cap \{1, \ldots, m \}$. 
For  Examples~\ref{ex:kernel_Gaussian}-2, \ref{ex:kernel_Gaussian}-3, \ref{ex:kernel_Gaussian}-4,  
first we took the set $\tilde{Y}_{k} := \{ (-1 + 2(p-1)/(k-1), -1 + 2(q-1)/(k-1) \mid p = 1,\ldots, k, \, q = 1,\ldots, k \} \subset [-1,1]^{2}$ 
for some $k$ and then let the intersection $\tilde{Y}_{k} \cap \Omega$ be the set $\{ y_{j} \}_{j=1}^{m}$. 
We chose $k=23, 32, 27$ to obtain $m = 529, 520, 529$ for 
Examples~\ref{ex:kernel_Gaussian}-2, \ref{ex:kernel_Gaussian}-3, \ref{ex:kernel_Gaussian}-4,
respectively. 

For the cases with $d \geq 2$ in Examples~\ref{ex:kernel_Spherical} and~\ref{ex:kernel_Gaussian}, 
we need to decide the order of the eigenfunctions of the kernels 
because there are several eigenfunctions corresponding to an eigenvalue. 
We chose the orders as follows:
\begin{align}
& \text{Example~\ref{ex:kernel_Spherical}} \ (d = 3): 
\notag \\
& Y_{0,1}, \ Y_{1,1}, Y_{1,2}, Y_{1,3}, \ \ldots, \ Y_{n,1}, \ldots, Y_{n,2n+1}, \ \ldots 
\label{eq:SpHarm_basis} \\
& \text{Example~\ref{ex:kernel_Gaussian}-2, \ref{ex:kernel_Gaussian}-3, \ref{ex:kernel_Gaussian}-4} \ (d = 2):
\notag \\
& \varphi_{1}\varphi_{1}, \ \varphi_{2}\varphi_{1}, \varphi_{1}\varphi_{2}, \ \ldots, \ 
\varphi_{n-1}\varphi_{1}, \ldots, \varphi_{1}\varphi_{n-1}, \ \ldots ,
\label{eq:Gauss_basis}
\end{align}
where 
$Y_{n, \ell}$ are the spherical harmonics that appear in~\eqref{eq:sphere_Mercer}, 
$\varphi_{i}$ are given in \eqref{eq:Gauss_eigen_val_func}, and 
$\varphi_{i}\varphi_{j} = \varphi_{i}(x_{1})\varphi_{j}(x_{2})$ for $(x_{1}, x_{2}) \in \mathbf{R}^{2}$. 
Under these settings, 
we applied Algorithms~\ref{alg:gen_points} and~\ref{alg:seq_gen_points} to these examples to generate sampling points. 
In applying Algorithm~\ref{alg:seq_gen_points}, 
we chose 
$\{ 4, 8, 12, 16, 20, 24 \}$ and 
$\{ 8, 12, 16, 20, 24, 28, 32, 35 \}$
as the sequence $\{ n_1, \ldots, n_{I} \}$ 
for the one-dimensional examples 
(Examples~\ref{ex:kernel_Brownian} and~\ref{ex:kernel_Gaussian}-1) and the others, respectively. 
Then, we computed the maximum values of the power functions 
and the condition numbers of the kernel matrix $\mathcal{K} = (K(x_{i}, x_{j}))_{ij}$. 

To implement Algorithms~\ref{alg:gen_points} and~\ref{alg:seq_gen_points}, 
we used MATLAB R2018b and the MOSEK optimization toolbox for MATLAB 8.1.0.56
provided by MOSEK ApS  in Denmark (\url{https://www.mosek.com/}, last accessed on 19 October 2018). 
The toolbox contains a solver for SOCP problems. 
All computation in this section was done with the double precision floating point numbers
on a computer with Intel Xeon $2.1$ GHz CPU and $31.9$ GB RAM. 
The programs used for the computation are available on the web page \cite{bib:Tanaka_SOCP_program_2018}. 

%----------
\subsection{Results}
\label{sec:num_results}

We show the results by Figures~\ref{fig:weights_powfuncs_Brown1D}--\ref{fig:sphere_time}. 
Figures~\ref{fig:weights_powfuncs_Brown1D}--\ref{fig:weights_Gauss2D}
display the numerical solutions 
for the weights $w^{\ast}$ and power functions for some representative cases. 
Figures~\ref{fig:sphere_n35}--\ref{fig:Gauss2D_n35}
show the generated points for the higher-dimensional kernels in the case $n=35$. 
One common feature of the computed weights is that 
most components of them are nearly zero except for those corresponding to the ``local maxima'' 
satisfying $w_{j}^{\ast} \geq w_{k}^{\ast}$ for any $j \in \mathcal{N}_{j}$. 
Therefore it is expected that Problem~\eqref{eq:equiv_appr_Dopt_relax} given by the convex relaxation 
can find good approximate solution of Problem~\eqref{eq:equiv_appr_Dopt}, 
although this phenomenon has not been proved theoretically and is not observed in some cases. 
Actually, we observed that it did not occur and Algorithm~\ref{alg:gen_points} failed 
in the case $n \geq 18$ for Example~\ref{ex:kernel_Gaussian}-1. 

Figures~\ref{fig:Brown1D_pf}--\ref{fig:Gauss2D_dsk_pf}
show the maximum values of the power functions 
given by Algorithms~\ref{alg:gen_points}, \ref{alg:seq_gen_points}, and 
the $P$-greedy algorithm. 
In addition, 
Figures~\ref{fig:Brown1D_cnd}--\ref{fig:Gauss2D_dsk_cnd} show the condition numbers of the kernel matrix $\mathcal{K} = (K(x_{i}, x_{j}))_{ij}$. 
For the one-dimensional examples, 
Examples~\ref{ex:kernel_Brownian} (Figures~\ref{fig:Brown1D_pf} and~\ref{fig:Brown1D_cnd})
and~\ref{ex:kernel_Gaussian}-1 (Figures~\ref{fig:Gauss1D_pf} and~\ref{fig:Gauss1D_cnd}), 
we can observe that Algorithm~\ref{alg:gen_points} outperforms the $P$-greedy algorithm for most of $n$, 
although the decay rates given by them seem to be similar. 
The performance of Algorithm~\ref{alg:seq_gen_points} is a bit worse. 
For the higher-dimensional examples, 
in most cases the $P$-greedy algorithm outperforms the others, 
although Algorithm~\ref{alg:seq_gen_points} compete with it. 
Algorithm~\ref{alg:gen_points} does not show stable performance, 
but it compete with the others for some $n$. 
We guess that Problem~\eqref{eq:equiv_appr_Dopt_relax} can be precisely solved in the one-dimensional case, 
whereas it becomes more difficult in the higher-dimensional cases. 
In such cases Algorithm~\ref{alg:seq_gen_points} performs better than Algorithm~\ref{alg:gen_points}. 

\begin{rem}
As $n$ gets large, 
it tends to be difficult to obtain appropriate points 
because the weights in $w^{\ast}$ 
often fails to have an apparent pattern for which $n$ ``local maxima'' can be found easily. 
For example, it has less than $n$ ``local maxima'' in some cases%
\footnote{
For example, 
we observed that 
Algorithm~\ref{alg:gen_points} failed
for $n=100$ in the case of Example~\ref{ex:kernel_Spherical}
because of this phenomenon.}. 
Even in the cases that we can find $n$ ``local maxima'', 
the chosen points tend to be ill-balanced. 
The saturation of the values of the power functions and condition numbers shown respectively by Figures~\ref{fig:Gauss1D_pf} and~\ref{fig:Gauss1D_cnd} 
is owing to this phenomenon. 
Resolution of this difficulty is one of the themes for future work. 
\end{rem}

Figures~\ref{fig:Brown_time} and~\ref{fig:sphere_time}
show the computation times of Algorithm~\ref{alg:gen_points} and 
the $P$-greedy algorithm applied to Examples~\ref{ex:kernel_Brownian} and~\ref{ex:kernel_Spherical}.
We present only these computation times
because those of Example~\ref{ex:kernel_Gaussian}-1 are similar to those of Example~\ref{ex:kernel_Brownian}, and those of 
Examples~\ref{ex:kernel_Gaussian}-2, \ref{ex:kernel_Gaussian}-3, and~\ref{ex:kernel_Gaussian}-4
are similar to those of Example~\ref{ex:kernel_Spherical}.
Recall that we used the same number $m=250$ (the number of the candidate points) 
for Examples~\ref{ex:kernel_Brownian} and~\ref{ex:kernel_Gaussian}-1, 
and used the similar numbers as $m$ for Examples~\ref{ex:kernel_Spherical}, \ref{ex:kernel_Gaussian}-2, \ref{ex:kernel_Gaussian}-3, and~\ref{ex:kernel_Gaussian}-4. 
In each figure, 
the left and right graphs show 
the times for making the SOCP instances of Algorithm~\ref{alg:gen_points} and
those for executing the SOCP optimizer and the $P$-greedy algorithm, respectively. 
Making an instance means constructing the structure of the MOSEK toolbox 
expressing the constraints of Problem~\eqref{eq:opt_expr_SOCP_pre_final} by a matrix and vectors. 
Its details are described in Appendix~\ref{sec:rem_impl_SOCP}. 
For the $P$-greedy algorithm, 
we plot the total time $t_{1} + \cdots + t_{n}$ for each $n$, 
where $t_{i}$ is the time for generating $i$-th point by the $P$-greedy algorithm. 
In addition, 
we omit the times for Algorithm~\ref{alg:seq_gen_points} 
because there were little differences between Algorithms~\ref{alg:gen_points} and~\ref{alg:seq_gen_points} for common numbers $n$. 
From these figures, 
we can observe that it took longer to make the SOCP instances than to solve them and 
the times for solving the instances are a bit longer than those for executing the $P$-greedy algorithm. 
Therefore 
we think that the efficiency of solving the SOCP problems is sufficient 
and expect that more efficient implementation of generating the instances
would make our algorithms competitive with the $P$-greedy algorithm.  

Finally, we show the results of investigation whether the points generated by the proposed algorithms are influenced by
\begin{enumerate}
\setlength{\parskip}{0pt}
\setlength{\parskip}{0pt}

\item
the parameter(s) and

\item
the order

\end{enumerate}
of the eigenfunctions in the Mercer expansion. 
For example, 
the eigenfunctions for the Gaussian kernel in Example~\ref{ex:kernel_Gaussian}
contain the parameter $\alpha$ that can take an arbitrary positive value.
It depends on the density function $\rho$ of the space $L^{2}(\Omega, \rho)$ 
to which the eigenfunctions belong. 
In addition, 
we used the order of the eigenfunctions given by \eqref{eq:SpHarm_basis} and \eqref{eq:Gauss_basis}
for Example~\ref{ex:kernel_Spherical} and 
Examples~\ref{ex:kernel_Gaussian}-2, \ref{ex:kernel_Gaussian}-3, \ref{ex:kernel_Gaussian}-4, respectively. 
We investigate whether these factors influence the generated points by numerical experiments. 
To this end, 
we take Example~\ref{ex:kernel_Gaussian}-2 with 
\begin{enumerate}
\setlength{\parskip}{0pt}
\setlength{\parskip}{0pt}

\item
$\alpha$ replaced by $2$ or

\item
the order of the eigenfunctions changed as
\begin{align}
& \varphi_{1}\varphi_{1}, \ \varphi_{1}\varphi_{2}, \varphi_{2}\varphi_{1}, \ \ldots, \ 
\varphi_{1}\varphi_{n-1}, \ldots, \varphi_{n-1}\varphi_{1}, \ \ldots .
\label{eq:Gauss_basis_changed}
\end{align}

\end{enumerate}
Then, we show the results of generating points by Algorithm~\ref{alg:gen_points} 
for $n=35$ by Figure~\ref{fig:Gauss2D_n35_another}.
We can observe that both factors influence the generated points. 

%%%%% Weights and power functions
\begin{figure}[t]
\centering

\begin{minipage}[t]{0.48\linewidth}
\includegraphics[width = \linewidth]{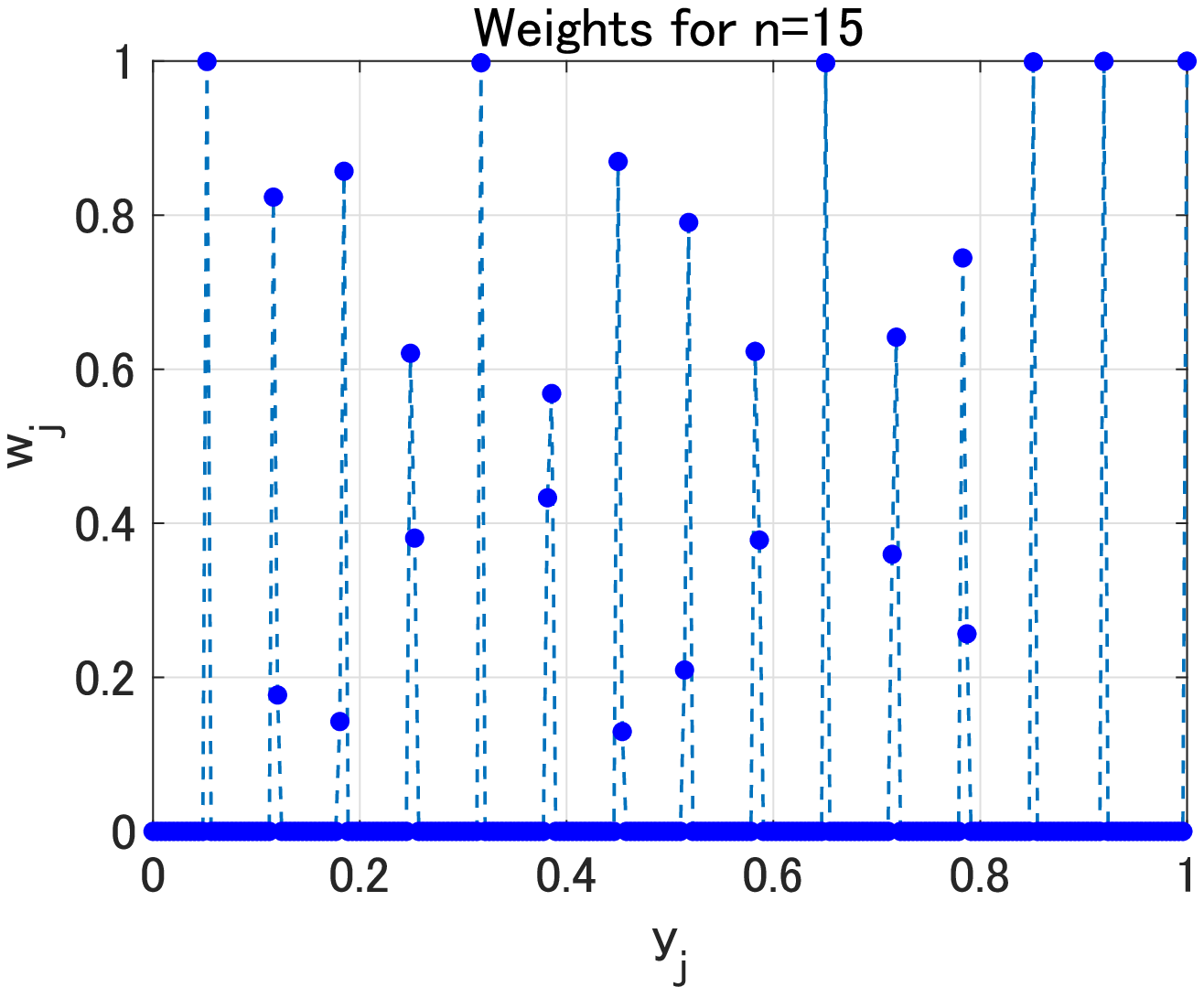}
\end{minipage}
\begin{minipage}[t]{0.48\linewidth}
\includegraphics[width = \linewidth]{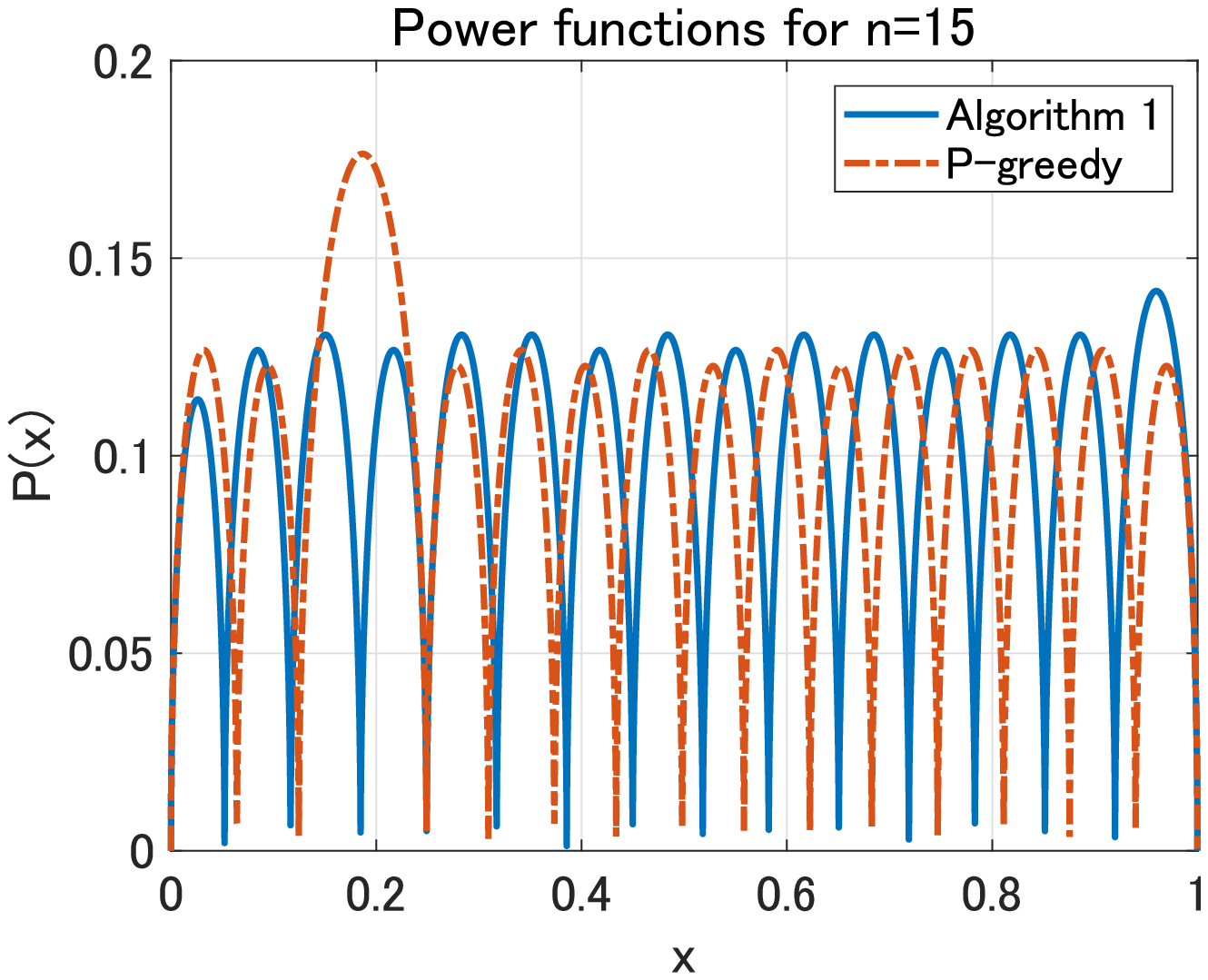}
\end{minipage}

\caption{Weights and power functions for the Brownian kernel on $[0,1]$ (Example~\ref{ex:kernel_Brownian}).
Left: the weights computed by Algorithm~\ref{alg:gen_points} for $n=15$, 
Right: the power functions given by Algorithm~\ref{alg:gen_points} and the $P$-greedy algorithm for $n = 15$. 
Their zeros are the points given by these algorithms. }
\label{fig:weights_powfuncs_Brown1D}

\end{figure}

\begin{figure}[t]
\centering

\begin{minipage}[t]{0.48\linewidth}
\includegraphics[width = \linewidth]{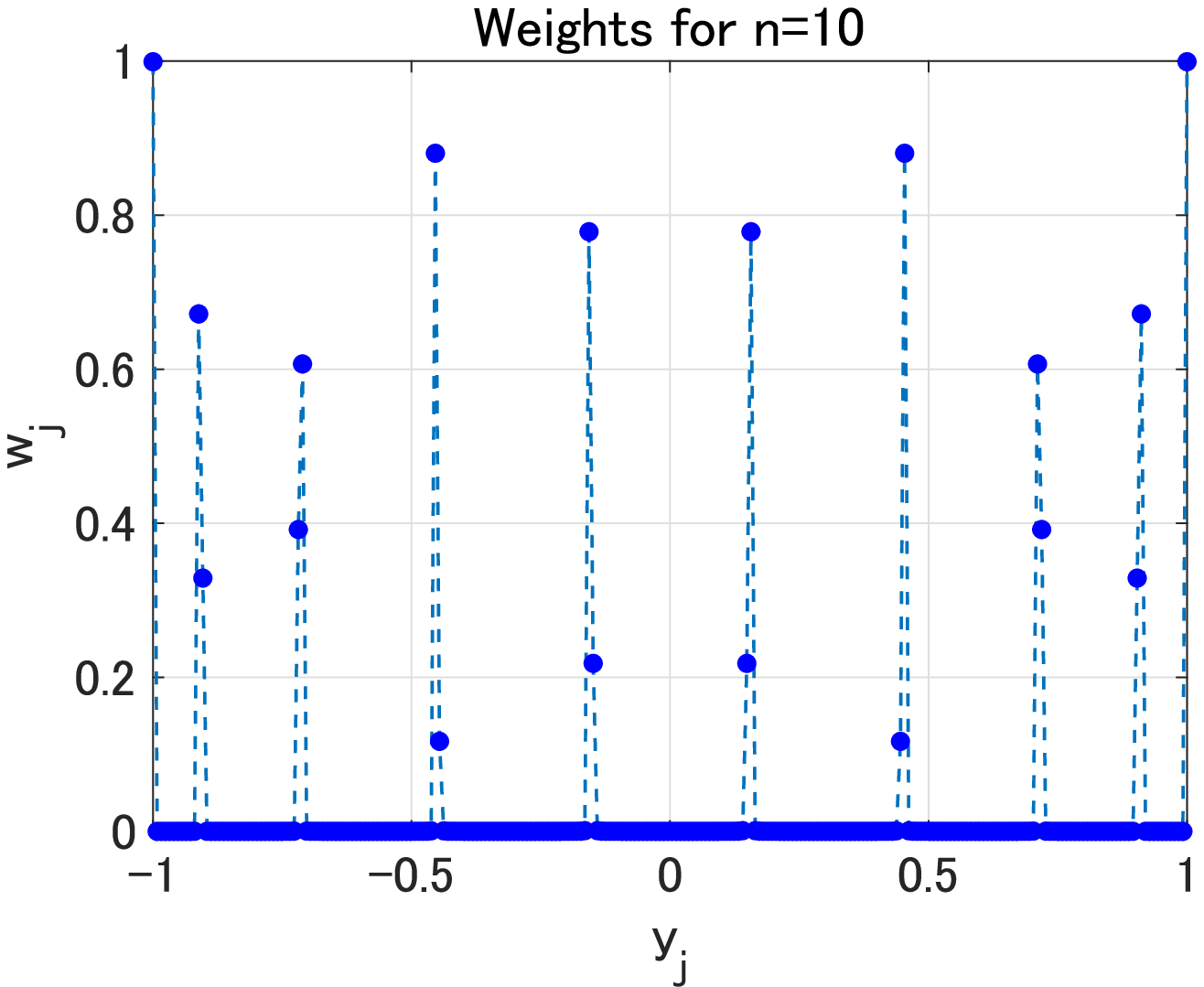}
\end{minipage}
\begin{minipage}[t]{0.48\linewidth}
\includegraphics[width = \linewidth]{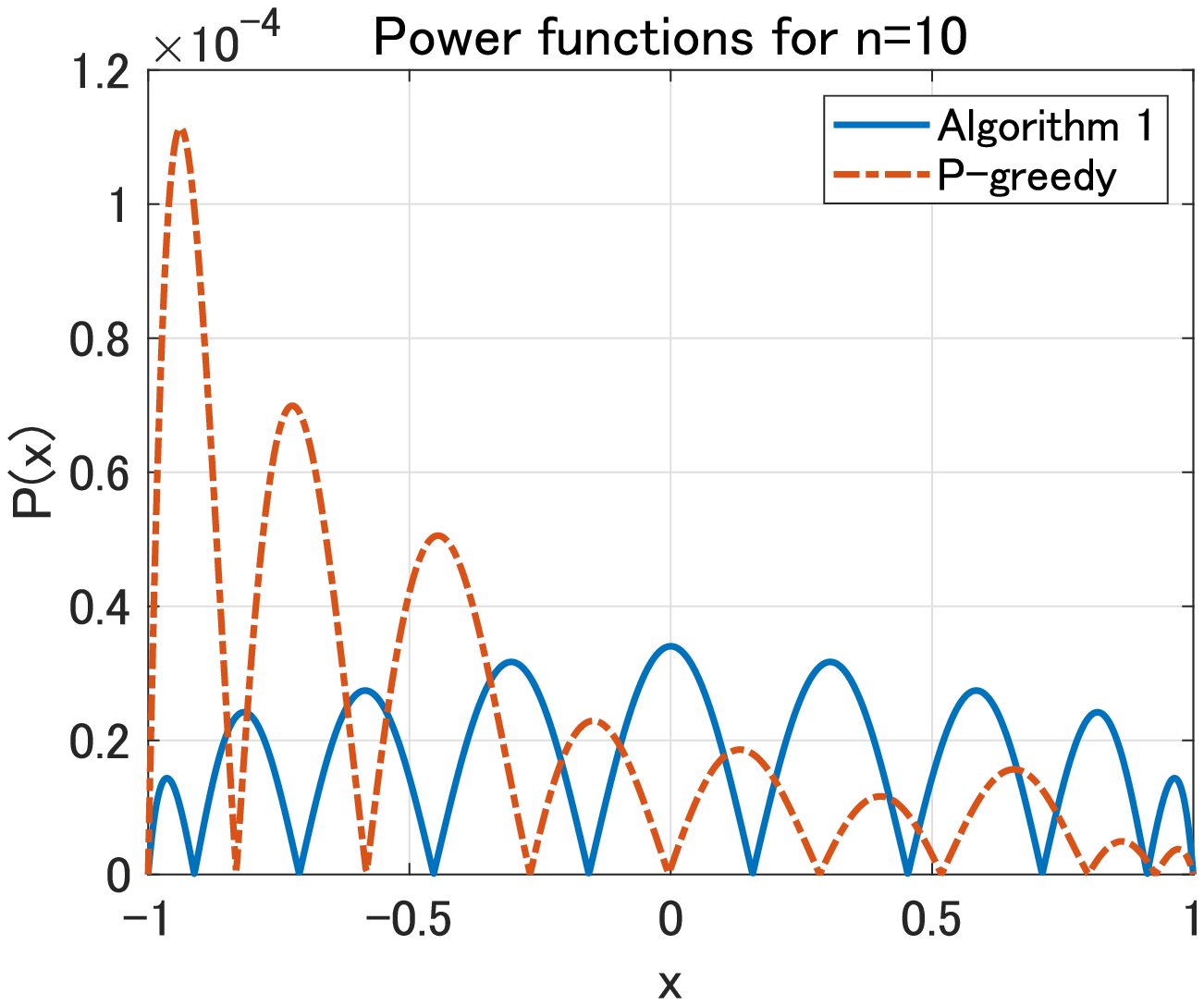}
\end{minipage}

\caption{Weights and power functions for the Gaussian kernel on $[-1,1]$ (Example~\ref{ex:kernel_Gaussian}-1).
Left: the weights computed by Algorithm~\ref{alg:gen_points} for $n=10$, 
Right: the power functions given by Algorithm~\ref{alg:gen_points} and the $P$-greedy algorithm for $n = 10$. 
Their zeros are the points given by these algorithms. }
\label{fig:weights_powfuncs_Gauss1D}

\end{figure}

\begin{figure}[t]

\centering
\begin{minipage}[t]{0.48\linewidth}
\includegraphics[width = \linewidth]{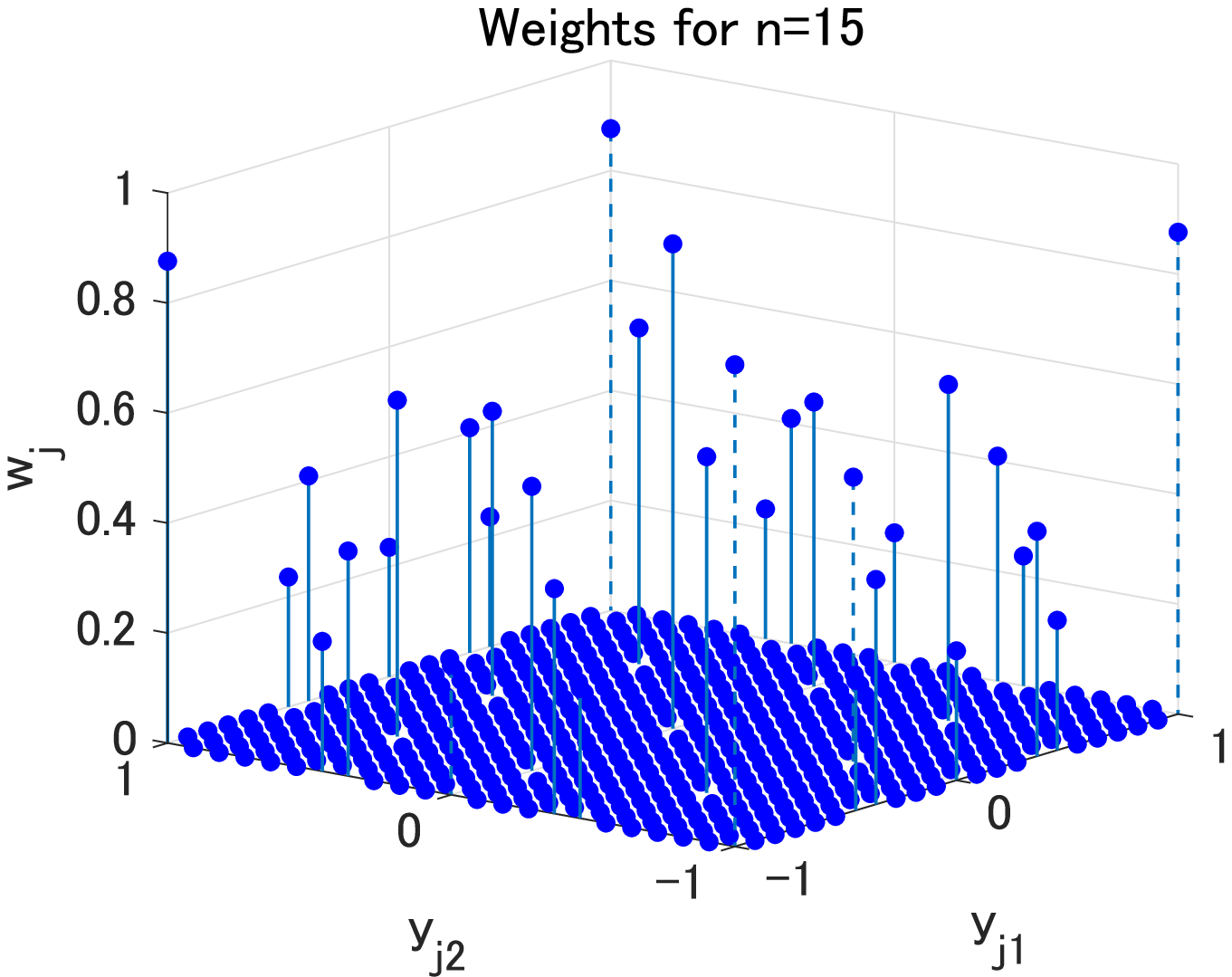}
\end{minipage}
\begin{minipage}[t]{0.48\linewidth}
\includegraphics[width = \linewidth]{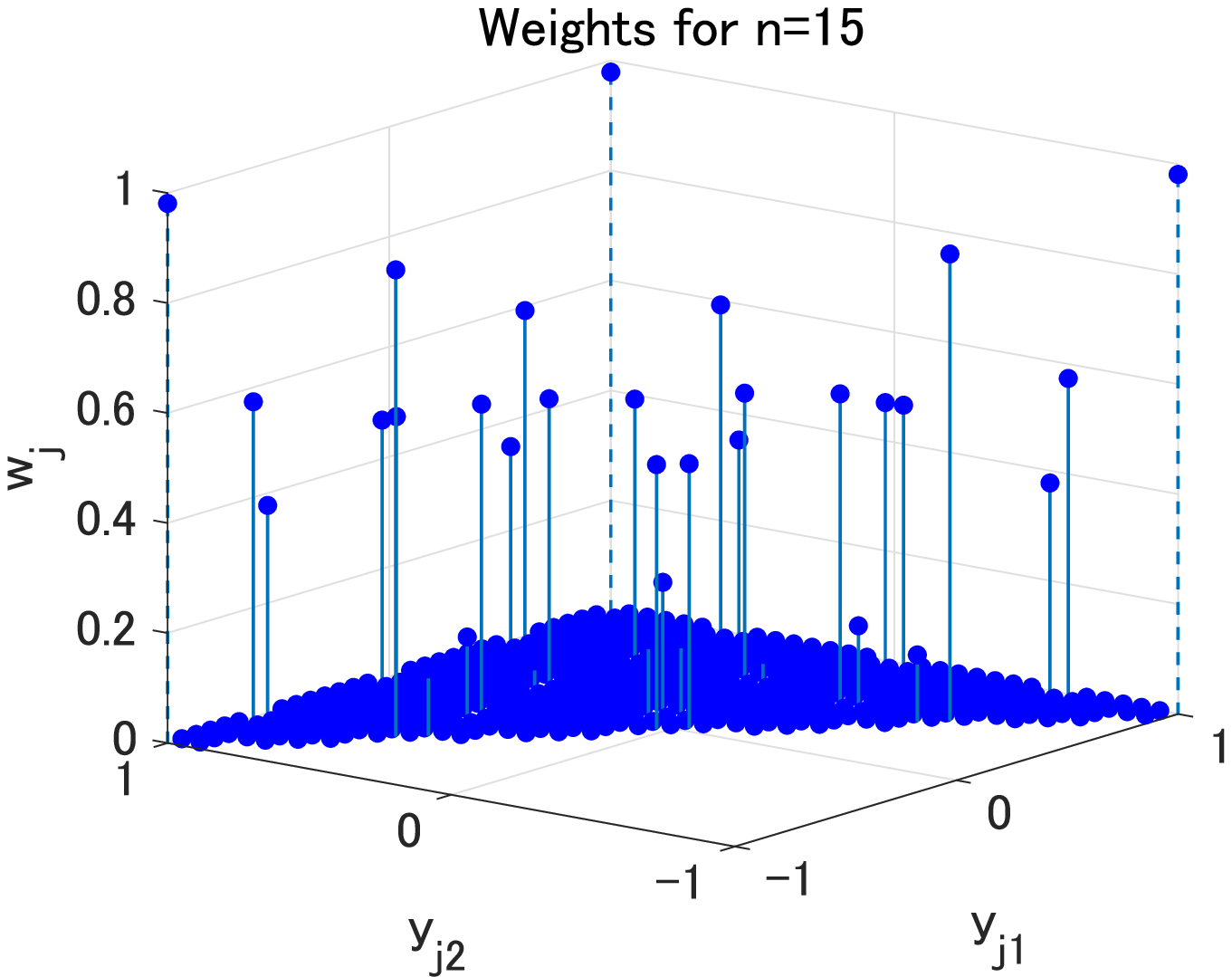}
\end{minipage}

\caption{Weights for the Gaussian kernel on $[-1,1]^{2}$ and $\triangle$ (Example~\ref{ex:kernel_Gaussian}-2, \ref{ex:kernel_Gaussian}-3).
Left: the weights on $[-1,1]^{2}$ given by Algorithm~\ref{alg:gen_points} for $n=15$, 
Right: the weights on $\triangle$ given by Algorithm~\ref{alg:gen_points} for $n = 15$. }
\label{fig:weights_Gauss2D}

\end{figure}

%% Points for the spherical kernel
\begin{figure}[t]

\centering
\begin{minipage}[t]{0.32\linewidth}
\includegraphics[width = \linewidth]{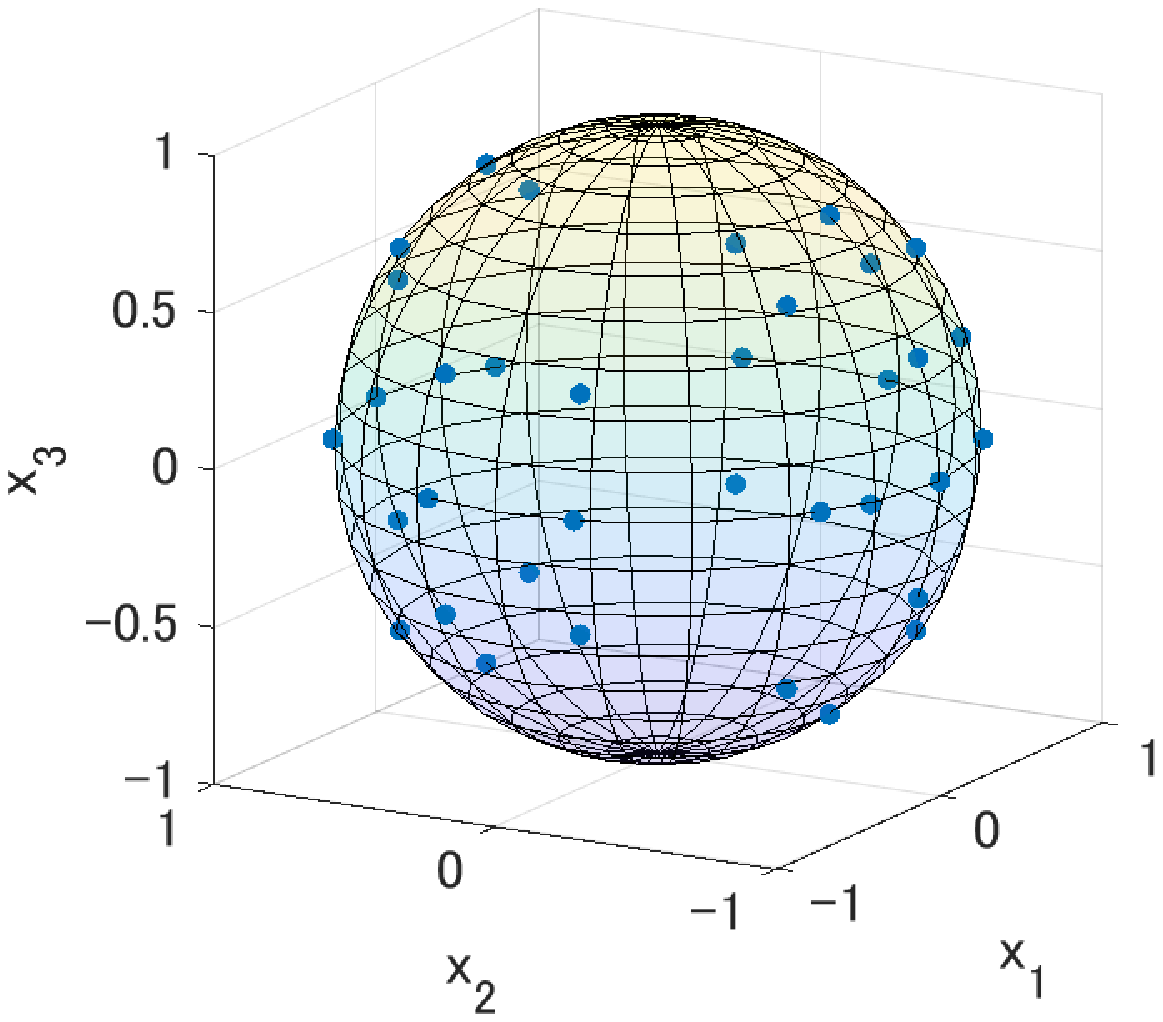}
\end{minipage}
\begin{minipage}[t]{0.32\linewidth}
\includegraphics[width = \linewidth]{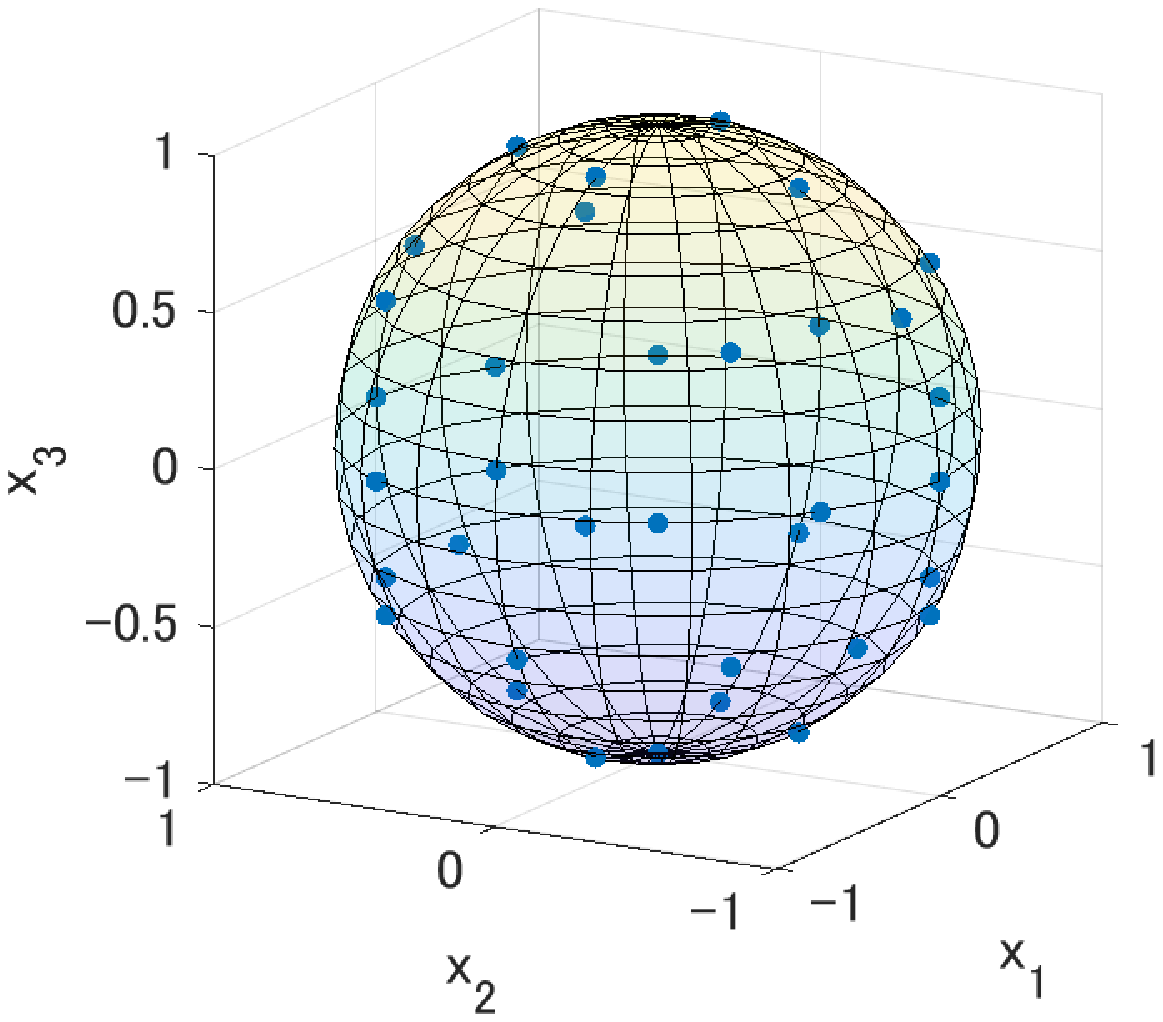}
\end{minipage}
\begin{minipage}[t]{0.32\linewidth}
\includegraphics[width = \linewidth]{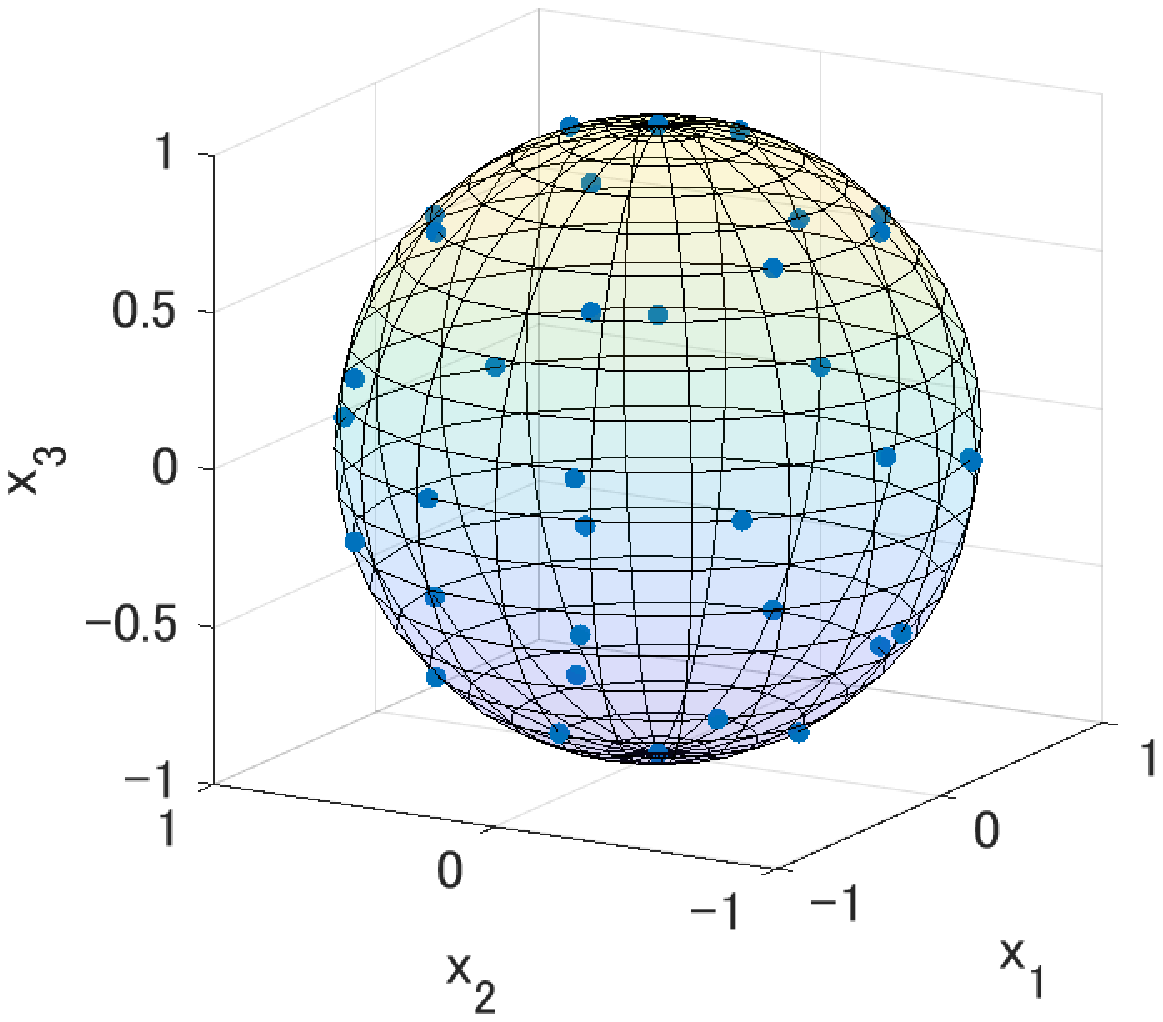}
\end{minipage}

\caption{Generated points on the sphere for the spherical inverse multiquadric kernels 
(Example~\ref{ex:kernel_Spherical}) in the case $n = 35$. 
Left: Algorithm~\ref{alg:gen_points}, 
Middle: Algorithm~\ref{alg:seq_gen_points}, 
Right: $P$-greedy algorithm.}
\label{fig:sphere_n35}

\end{figure}

%% Points for the Gaussian 2D kernel
\begin{figure}[t]

\centering
\begin{minipage}[t]{0.32\linewidth}
\includegraphics[width = \linewidth]{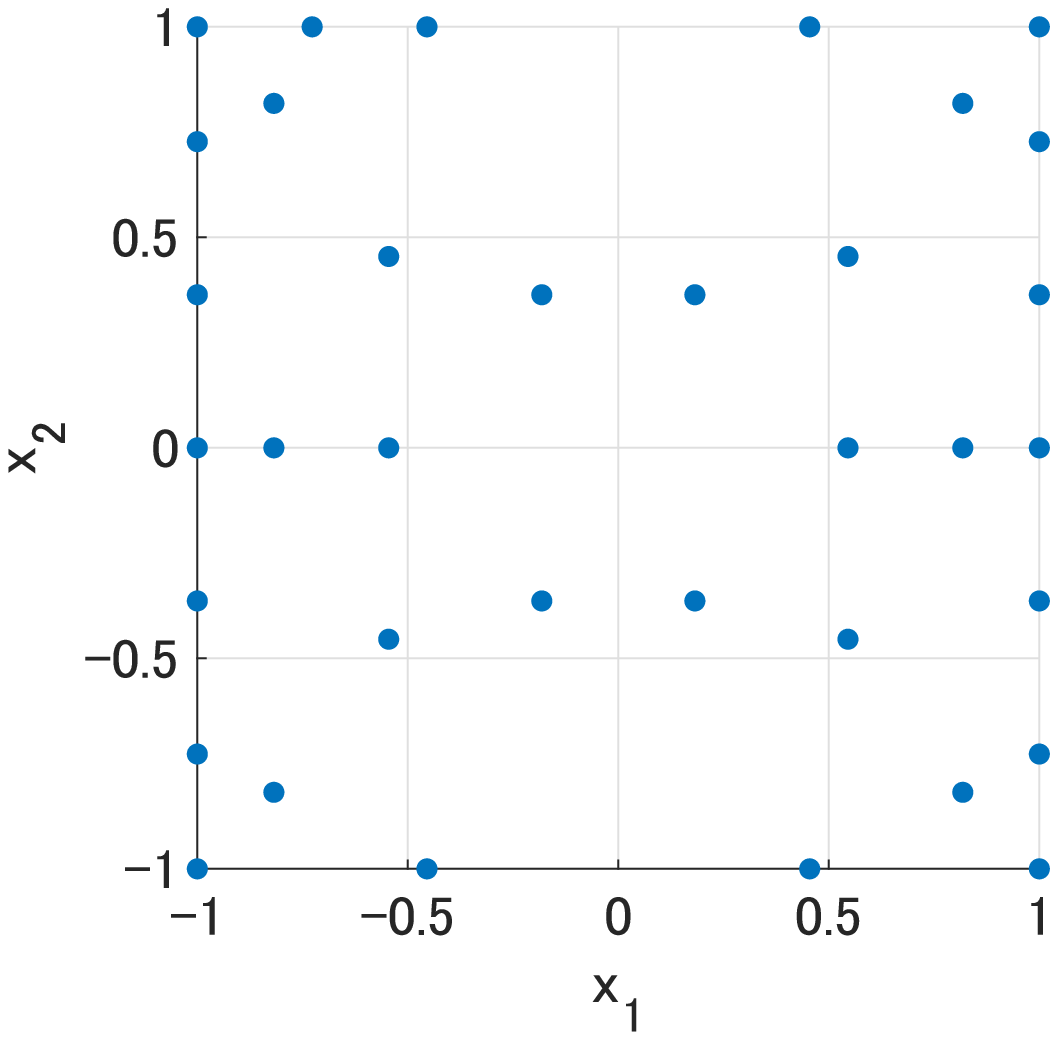}
\end{minipage}
\begin{minipage}[t]{0.32\linewidth}
\includegraphics[width = \linewidth]{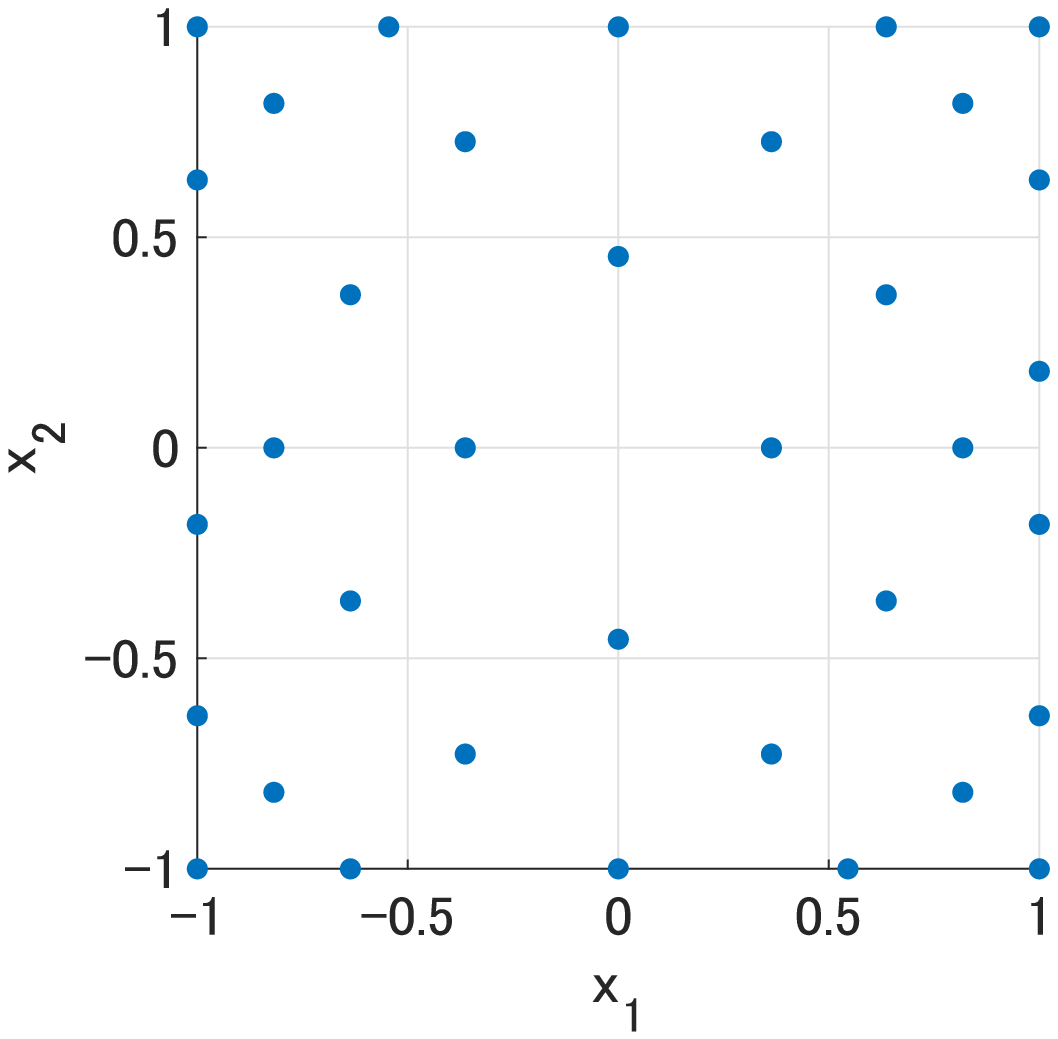}
\end{minipage}
\begin{minipage}[t]{0.32\linewidth}
\includegraphics[width = \linewidth]{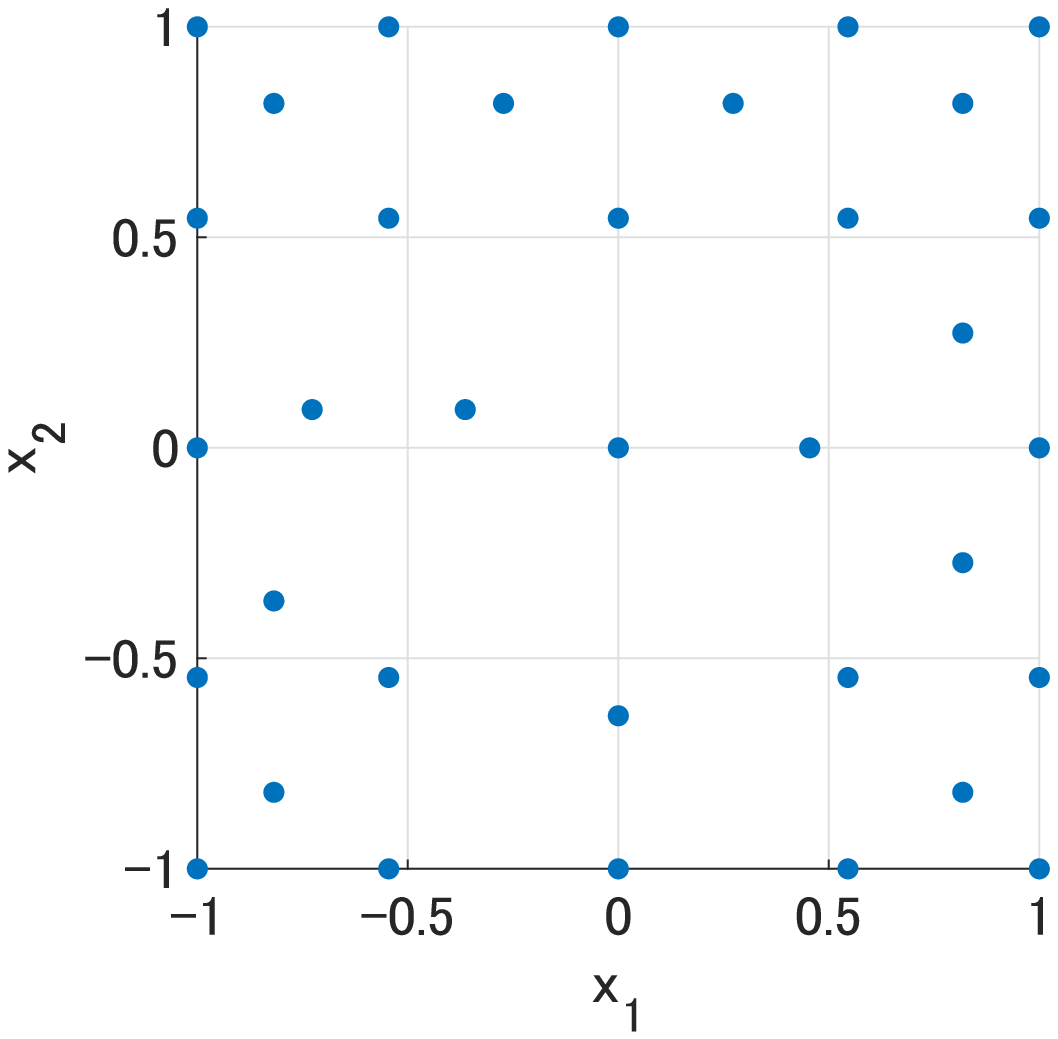}
\end{minipage}\\
(a) Generated points on $[-1,1]^{2}$ (Example~\ref{ex:kernel_Gaussian}-2).

\begin{minipage}[t]{0.32\linewidth}
\includegraphics[width = \linewidth]{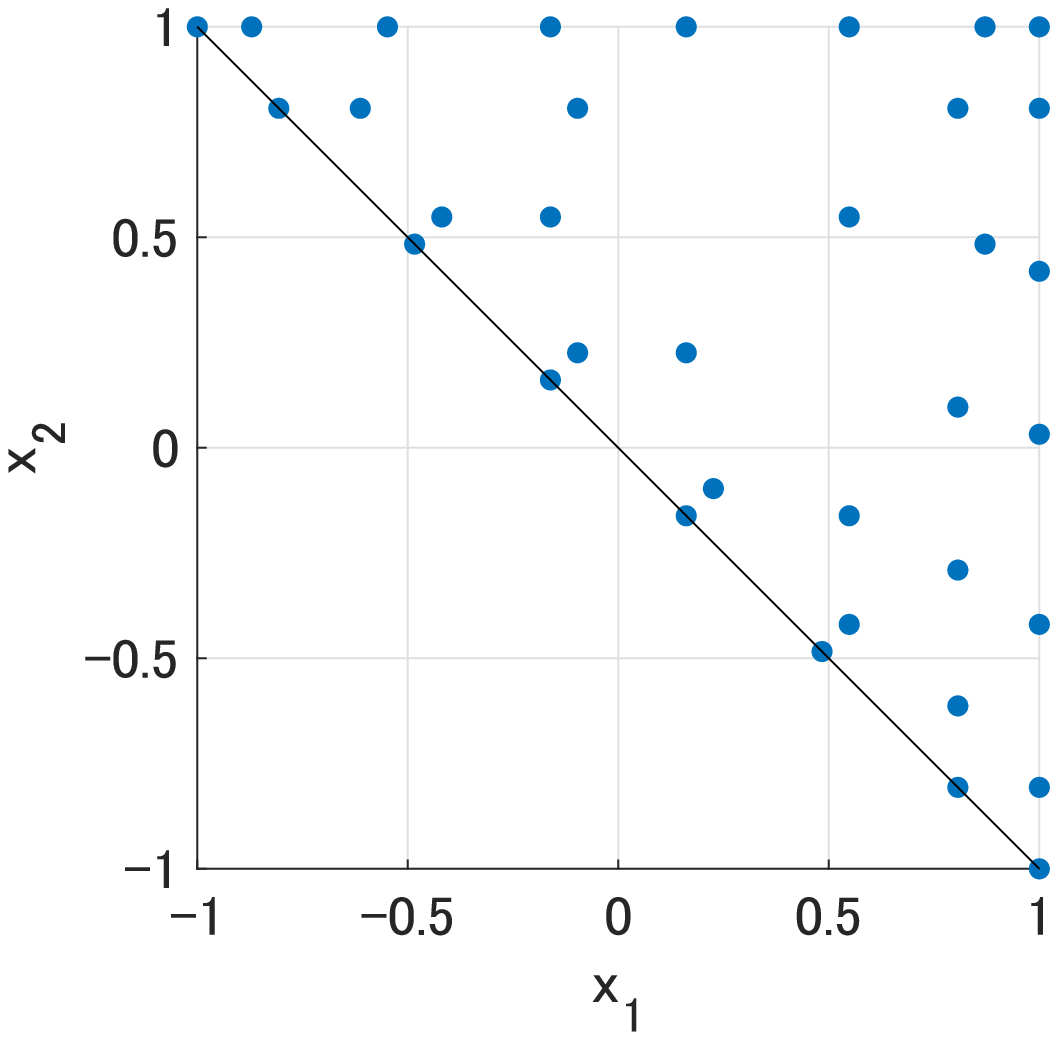}
\end{minipage}
\begin{minipage}[t]{0.32\linewidth}
\includegraphics[width = \linewidth]{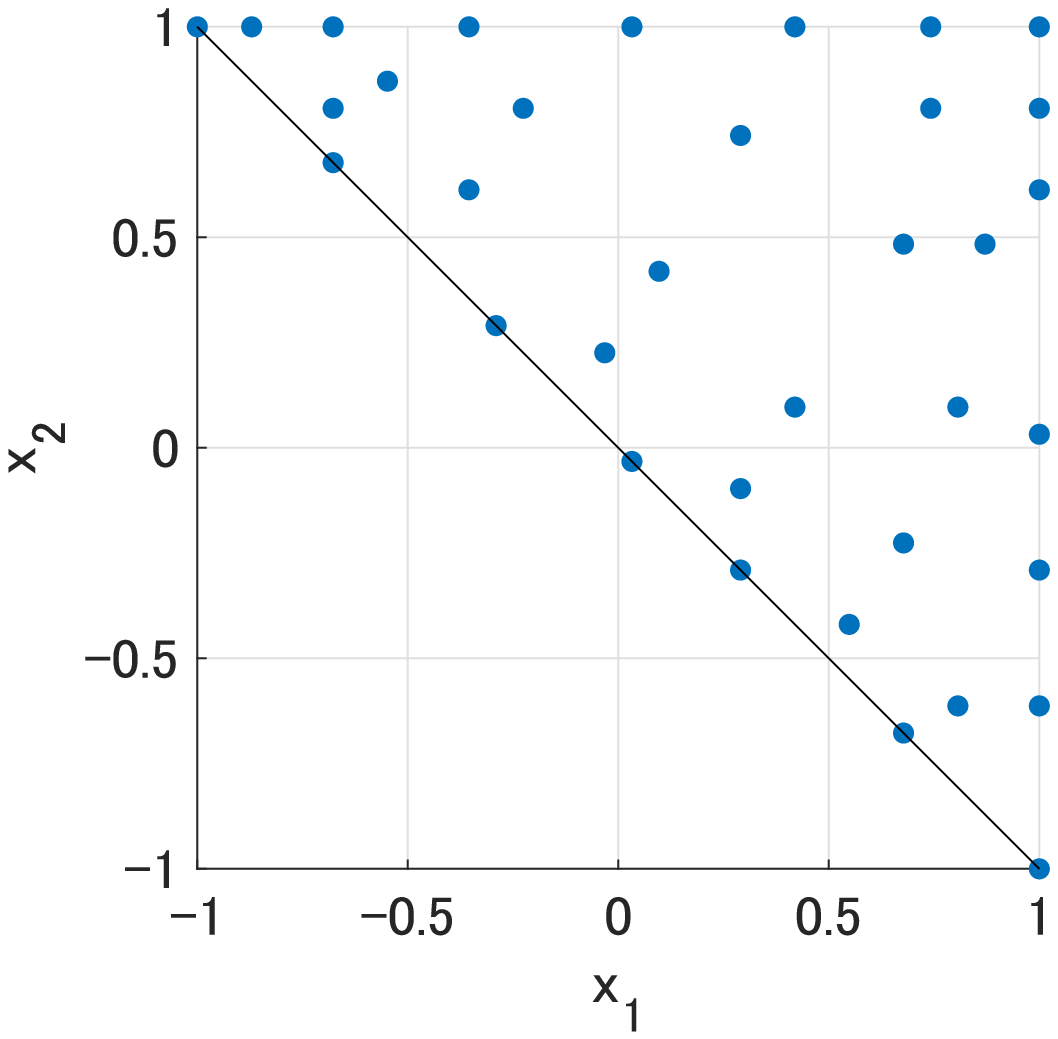}
\end{minipage}
\begin{minipage}[t]{0.32\linewidth}
\includegraphics[width = \linewidth]{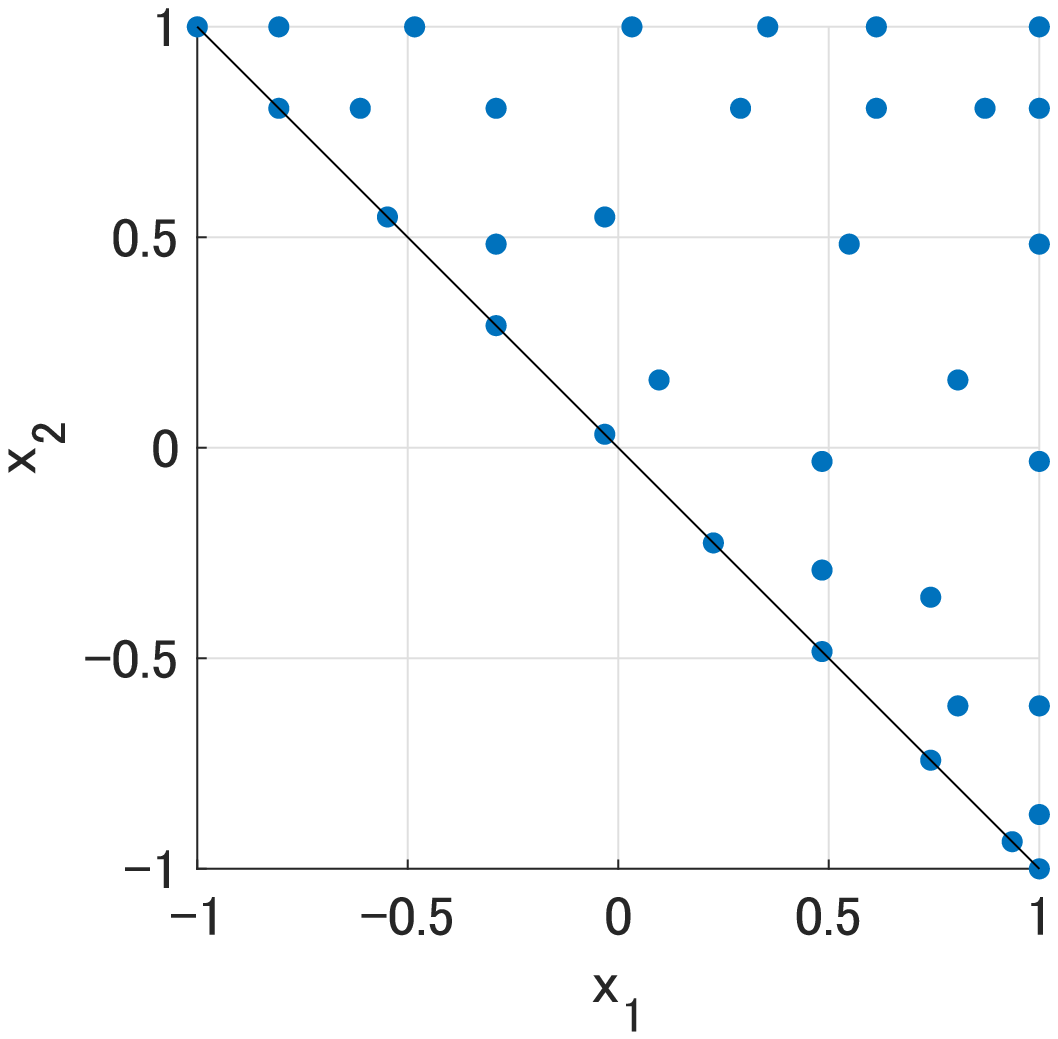}
\end{minipage}\\
(b) Generated points on $\triangle \subset \mathbf{R}^{2}$ (Example~\ref{ex:kernel_Gaussian}-3).

\begin{minipage}[t]{0.32\linewidth}
\includegraphics[width = \linewidth]{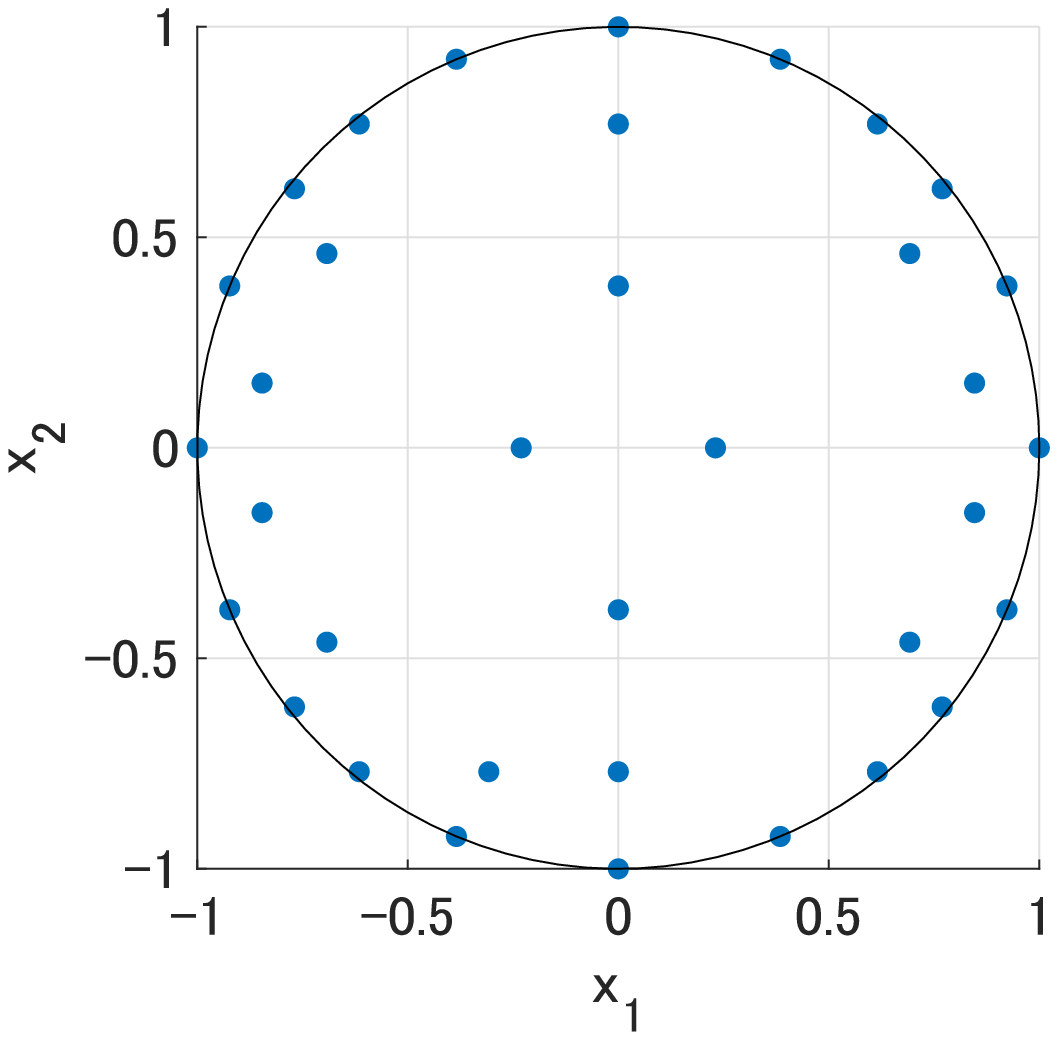}
\end{minipage}
\begin{minipage}[t]{0.32\linewidth}
\includegraphics[width = \linewidth]{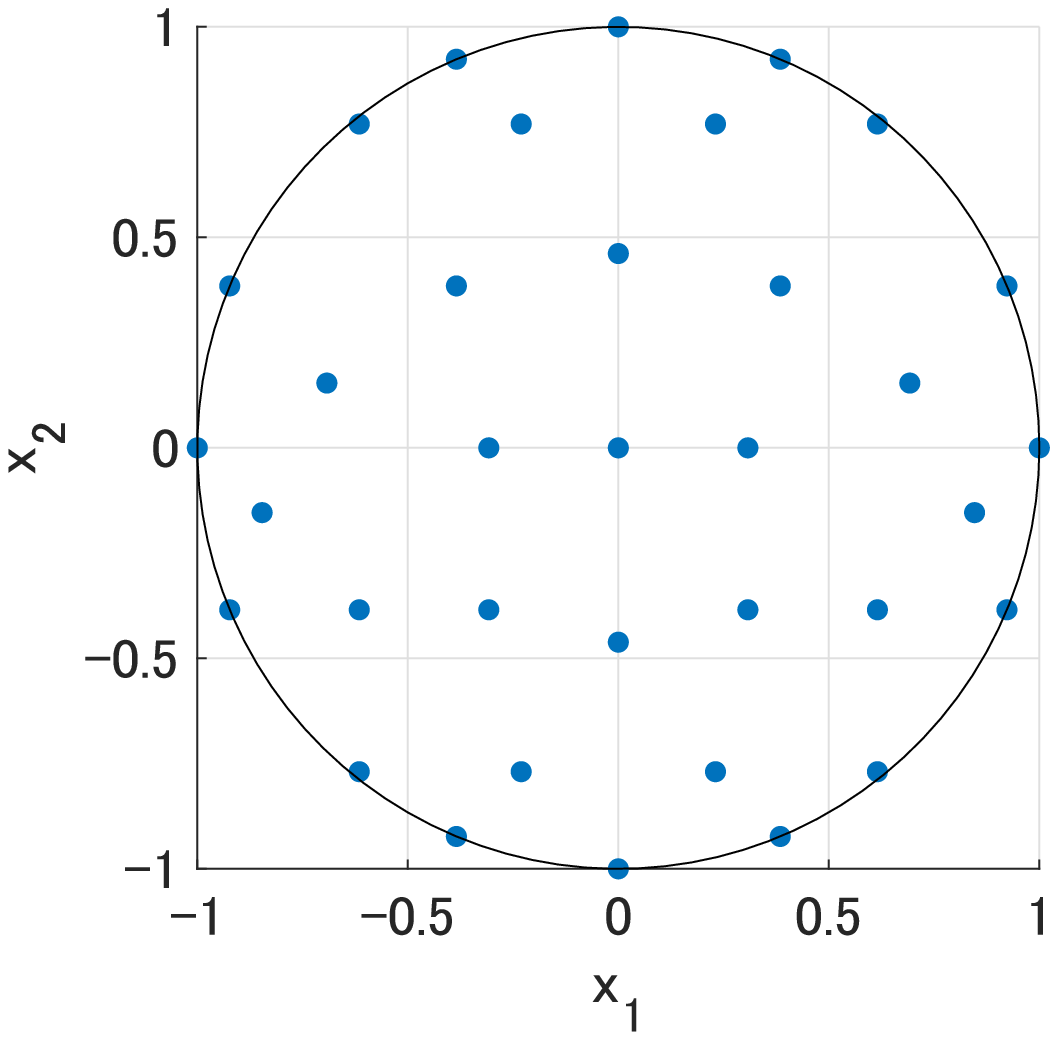}
\end{minipage}
\begin{minipage}[t]{0.32\linewidth}
\includegraphics[width = \linewidth]{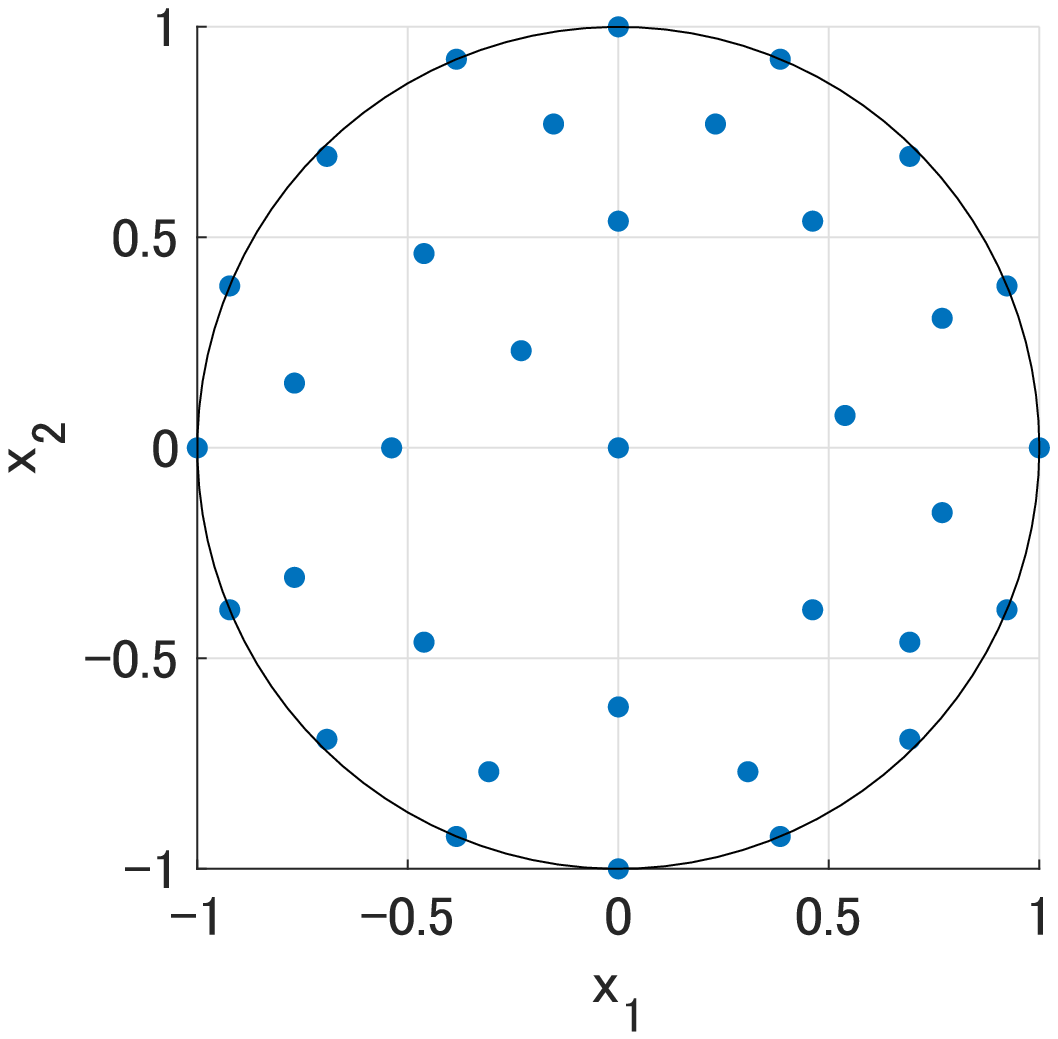}
\end{minipage}\\
(c) Generated points on $D \subset \mathbf{R}^{2}$ (Example~\ref{ex:kernel_Gaussian}-4).

\caption{Generated points for the Gaussian kernels (Example~\ref{ex:kernel_Gaussian}) in the case $n = 35$. 
Left: Algorithm~\ref{alg:gen_points}, 
Middle: Algorithm~\ref{alg:seq_gen_points}, 
Right: $P$-greedy algorithm.}
\label{fig:Gauss2D_n35}

\end{figure}

%%%%% Maximum values of the power functions
\begin{figure}[t]
\centering

\begin{minipage}[t]{0.48\linewidth}
\includegraphics[width = \linewidth]{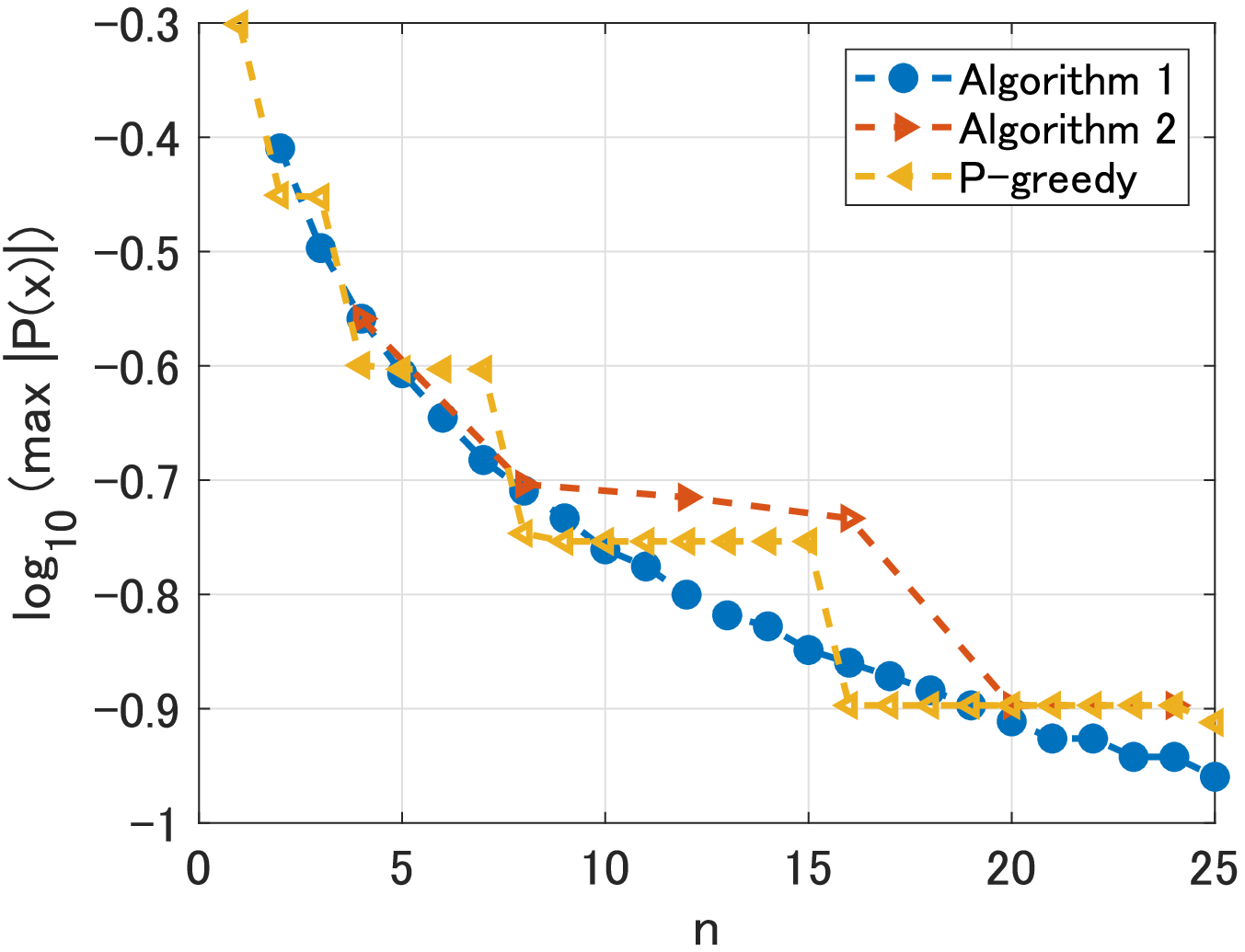}
\caption{Maximum values of the power functions for Brownian kernel on $[0,1]$ (Example~\ref{ex:kernel_Brownian}).}
\label{fig:Brown1D_pf}
\end{minipage}
\quad
\begin{minipage}[t]{0.48\linewidth}
\includegraphics[width = \linewidth]{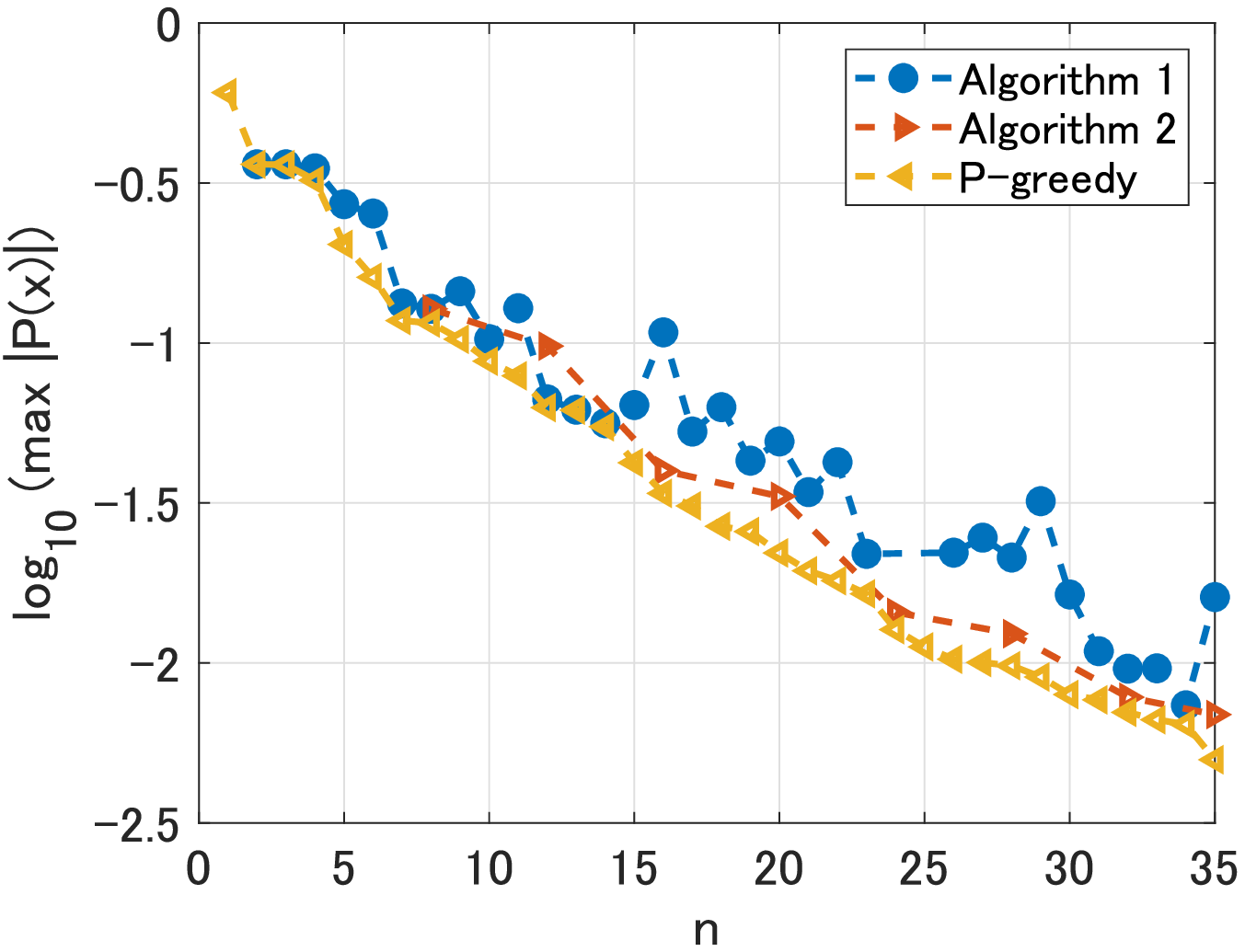}
\caption{Maximum values of the power functions 
for the spherical inverse multiquadric kernel on $S^{2} \subset \mathbf{R}^{3}$ (Example~\ref{ex:kernel_Spherical}).}
\label{fig:sphere_pf}
\end{minipage}

\begin{minipage}[t]{0.48\linewidth}
\includegraphics[width = \linewidth]{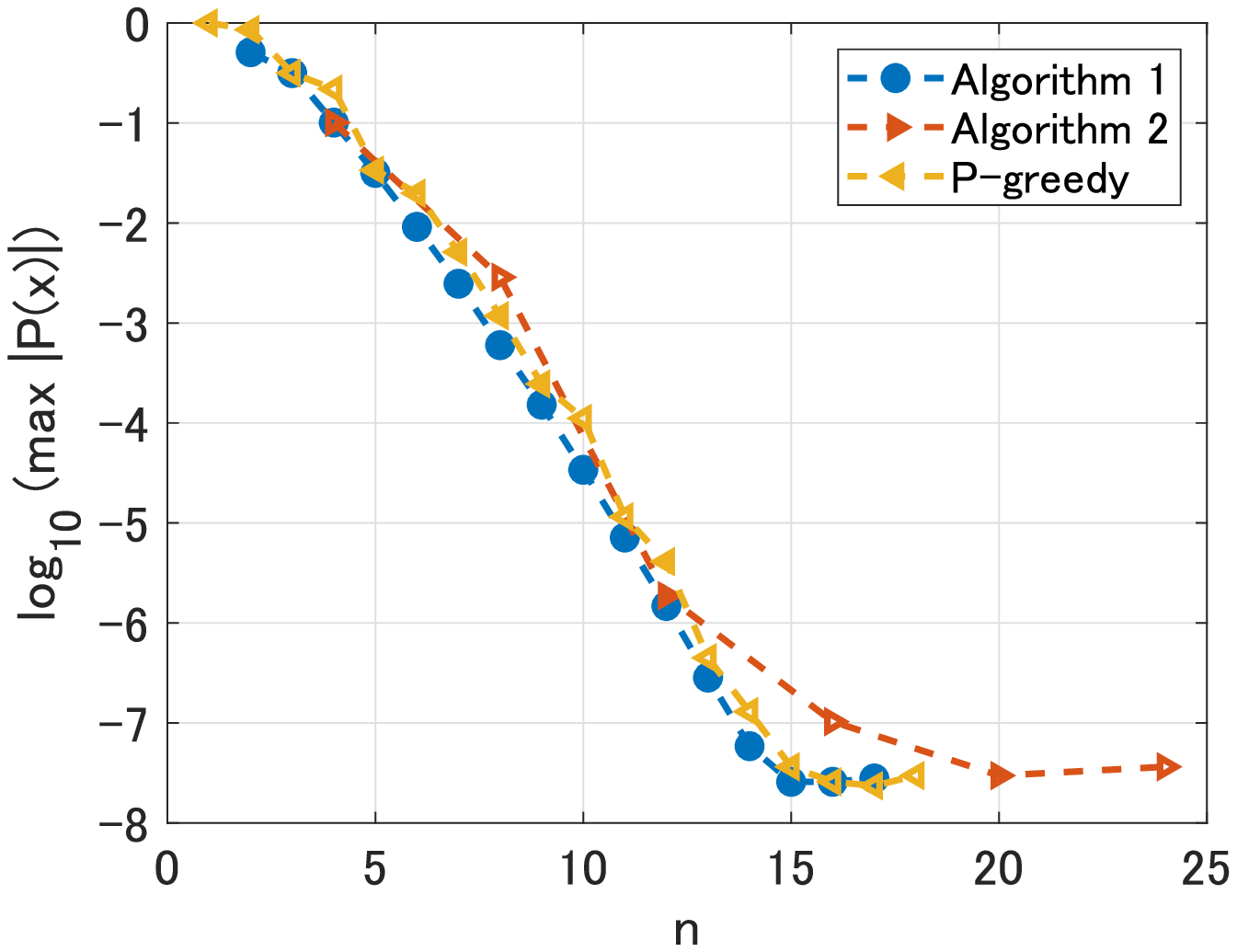}
\caption{Maximum values of the power functions for the Gaussian kernel on $[-1,1]$ (Example~\ref{ex:kernel_Gaussian}-1).}
\label{fig:Gauss1D_pf}
\end{minipage}
\quad
\begin{minipage}[t]{0.48\linewidth}
\includegraphics[width = \linewidth]{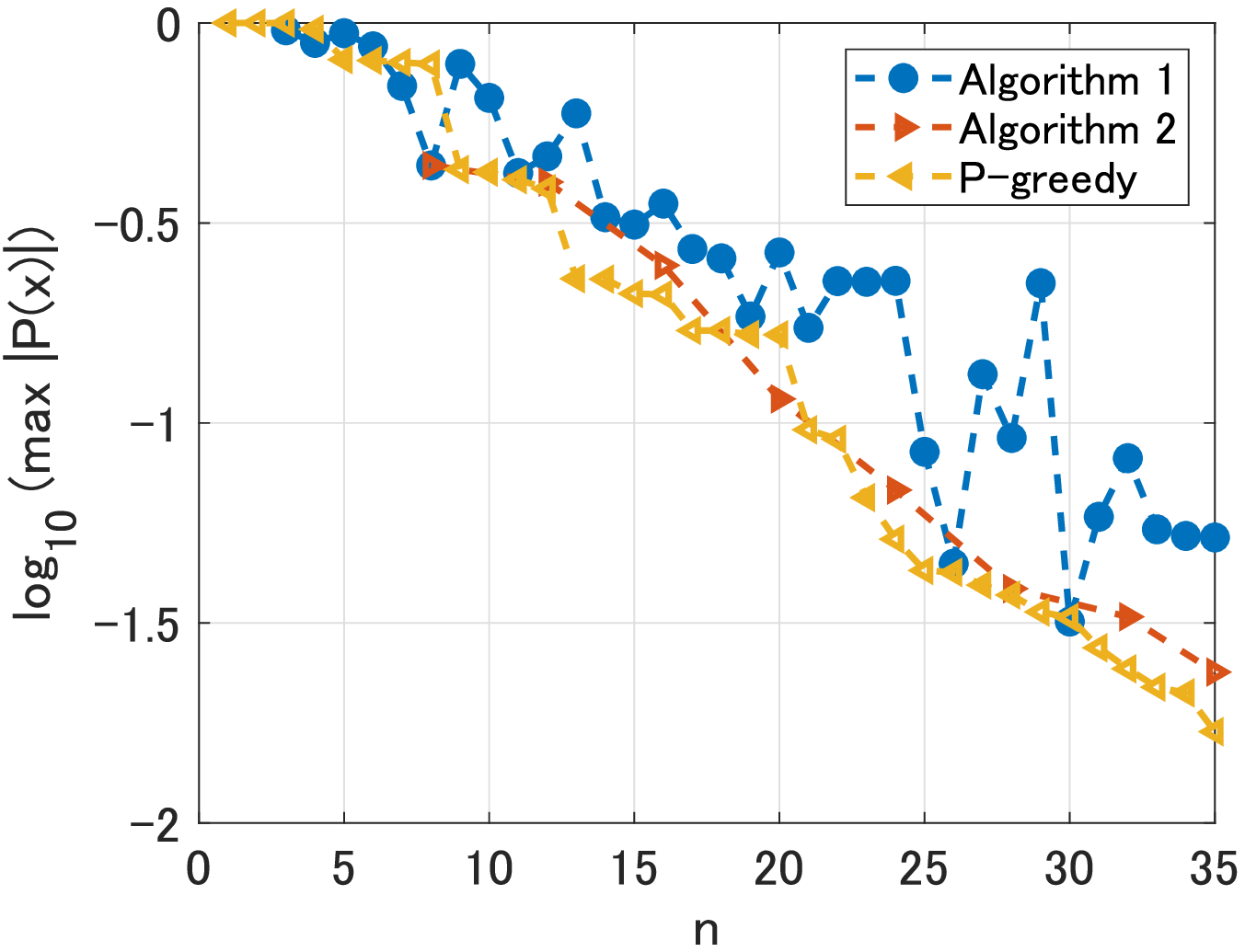}
\caption{Maximum values of the power functions for the Gaussian kernel on $[-1,1]^2 \subset \mathbf{R}^{2}$ (Example~\ref{ex:kernel_Gaussian}-2).}
\label{fig:Gauss2D_sqr_pf}
\end{minipage}
\end{figure}

\begin{figure}[t]
\centering
\begin{minipage}[t]{0.48\linewidth}
\includegraphics[width = \linewidth]{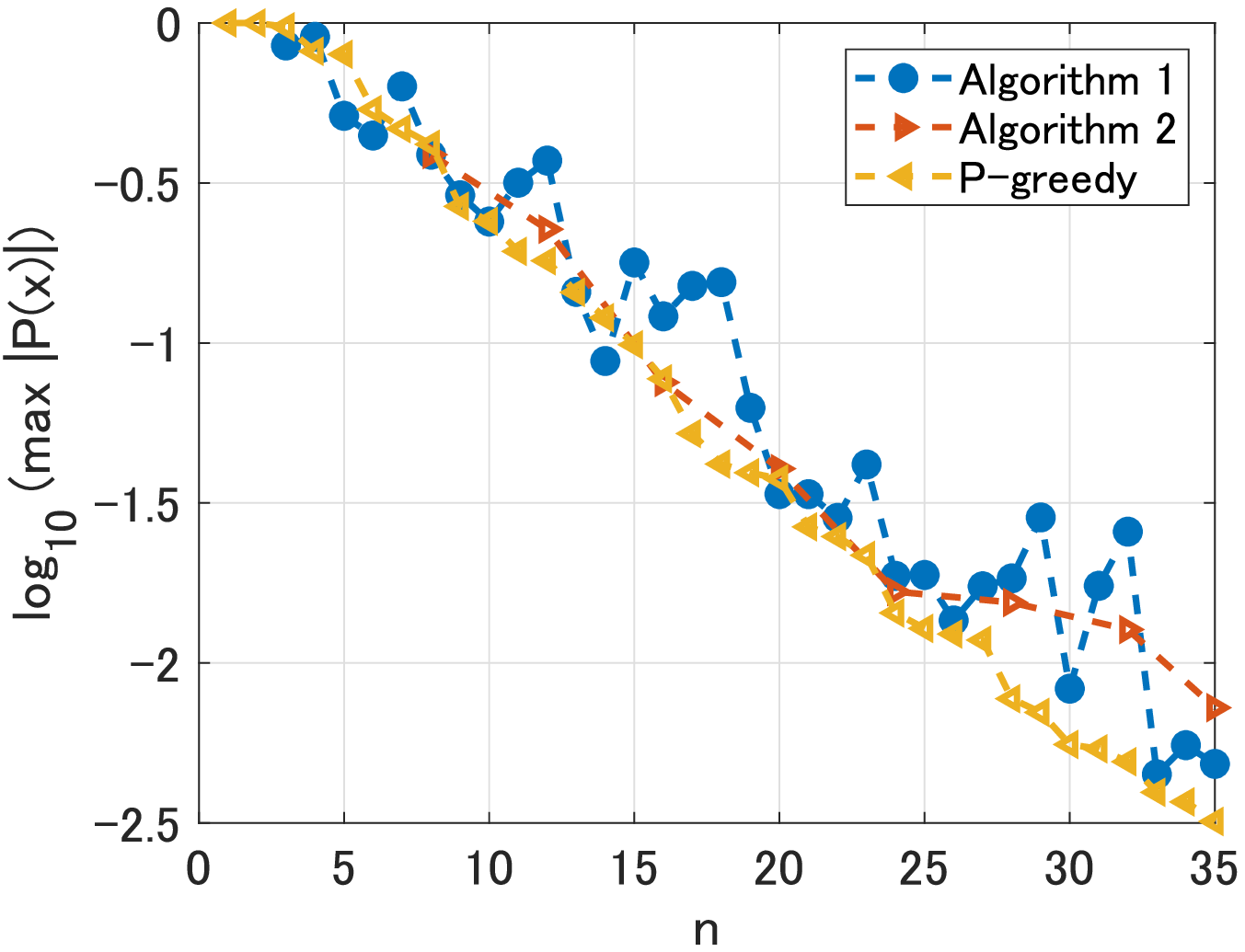}
\caption{Maximum values of the power functions for the Gaussian kernel on $\triangle \subset \mathbf{R}^{2}$ (Example~\ref{ex:kernel_Gaussian}-3).}
\label{fig:Gauss2D_tri_pf}
\end{minipage}
\quad
\begin{minipage}[t]{0.48\linewidth}
\includegraphics[width = \linewidth]{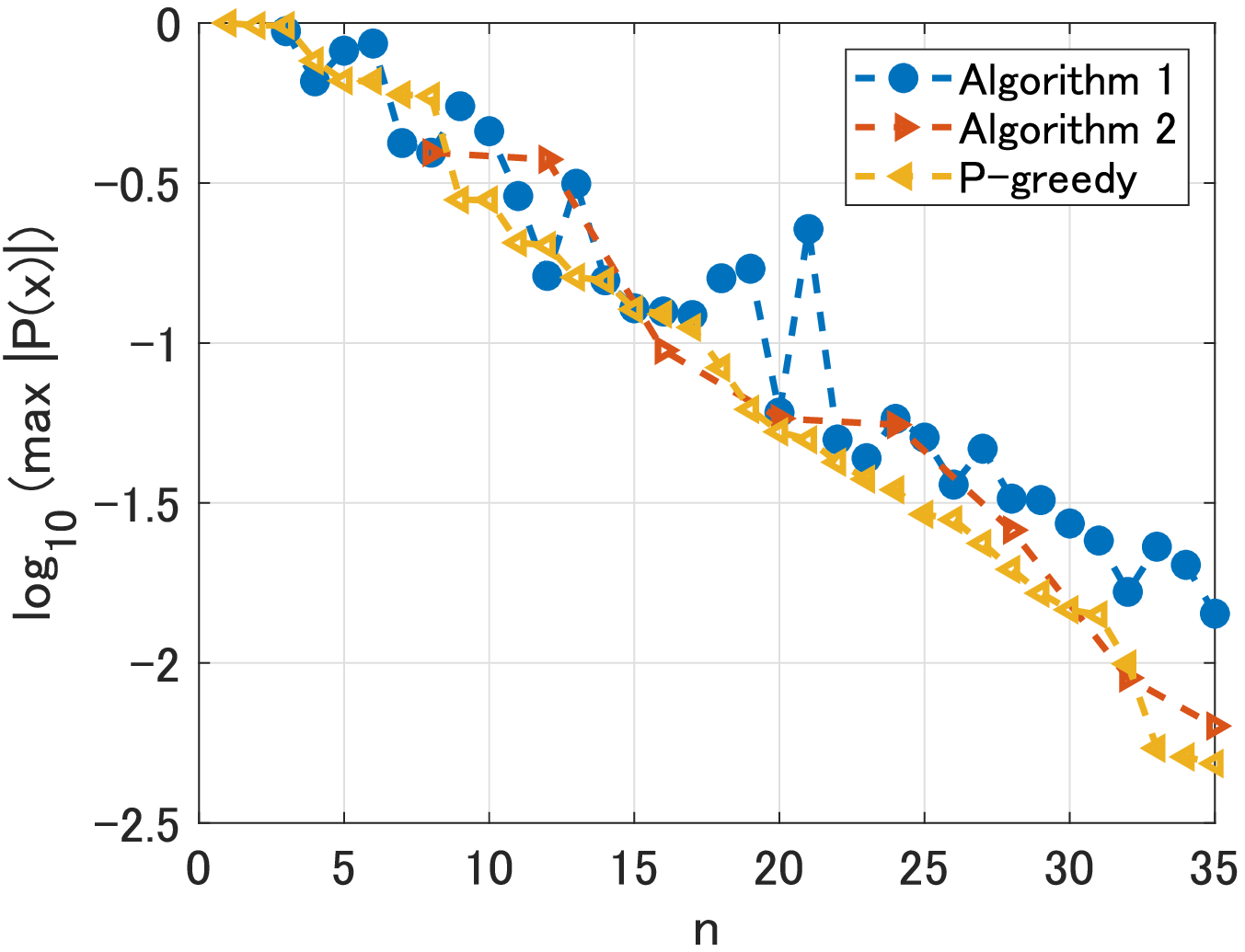}
\caption{Maximum values of the power functions for the Gaussian kernel on $D \subset \mathbf{R}^{2}$ (Example~\ref{ex:kernel_Gaussian}-4).}
\label{fig:Gauss2D_dsk_pf}
\end{minipage}
\end{figure}

%%%%% Condition numbers
\begin{figure}[t]

\centering
\begin{minipage}[t]{0.48\linewidth}
\includegraphics[width = \linewidth]{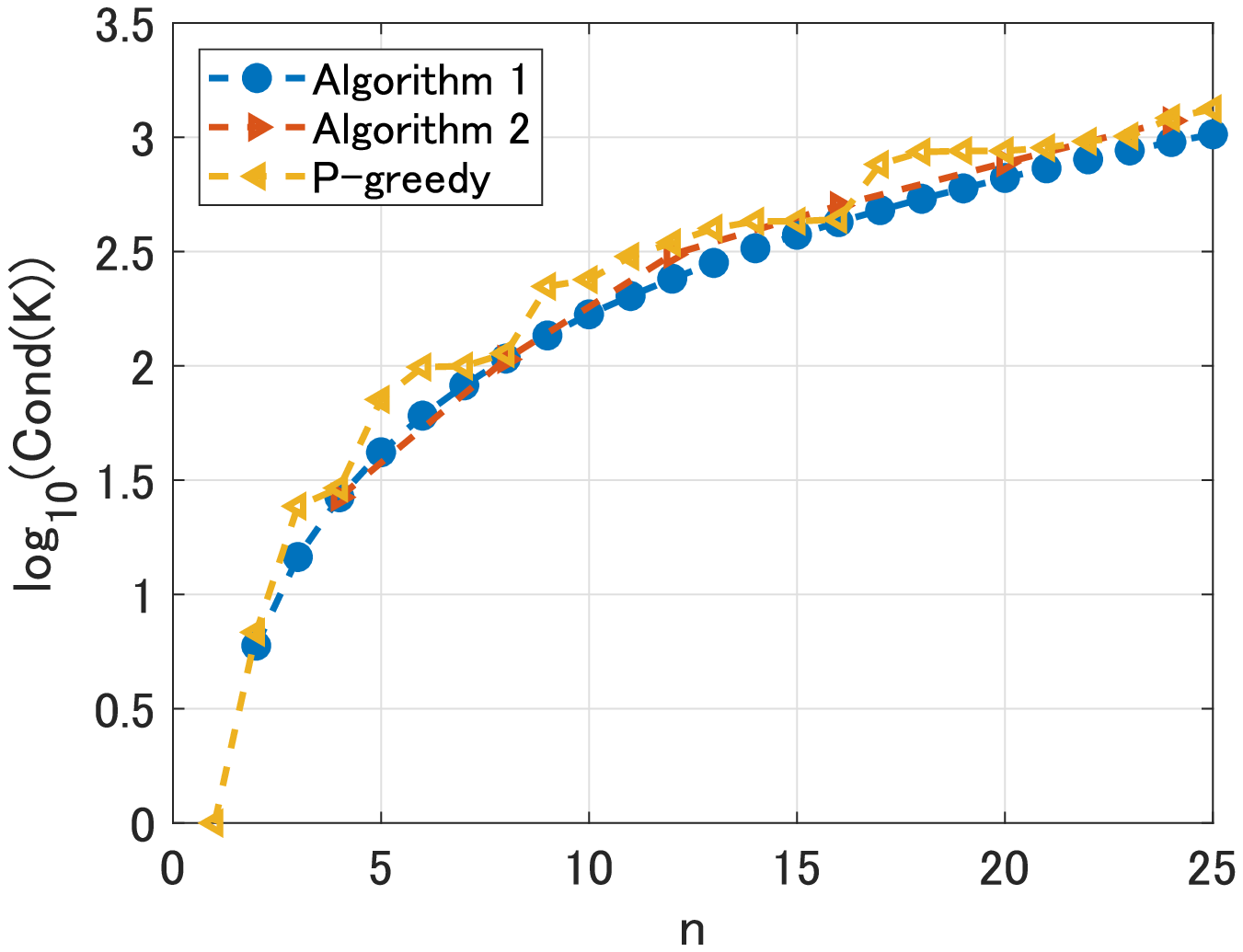}
\caption{Condition numbers of the kernel matrices for Brownian kernel on $[0,1]$ (Example~\ref{ex:kernel_Brownian}).}
\label{fig:Brown1D_cnd}
\end{minipage}
\quad
\begin{minipage}[t]{0.48\linewidth}
\includegraphics[width = \linewidth]{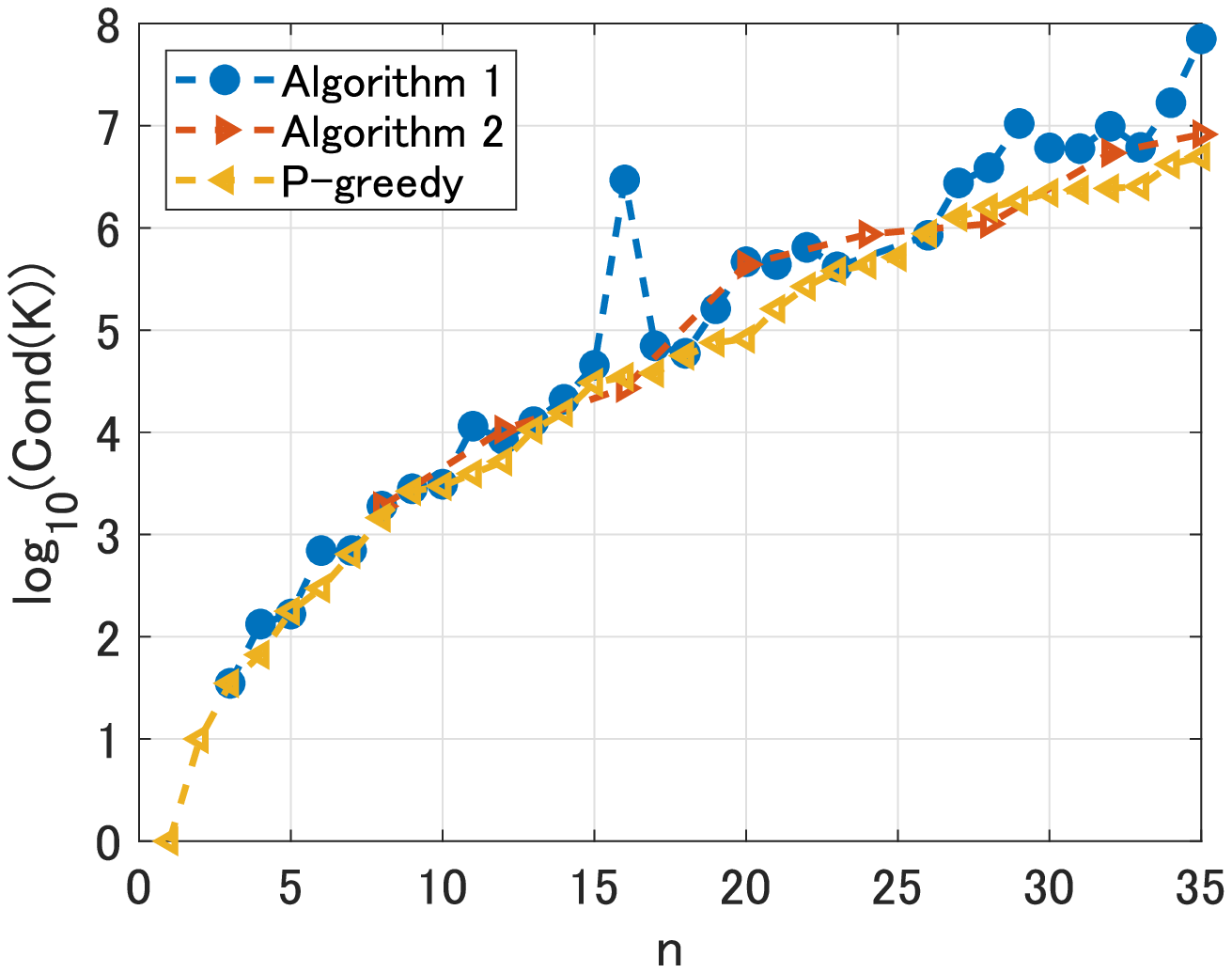}
\caption{Condition numbers of the kernel matrices 
for the spherical inverse multiquadric kernel on $S^{2} \subset \mathbf{R}^{3}$ (Example~\ref{ex:kernel_Spherical}).}
\label{fig:sphere_cnd}
\end{minipage}

\begin{minipage}[t]{0.48\linewidth}
\includegraphics[width = \linewidth]{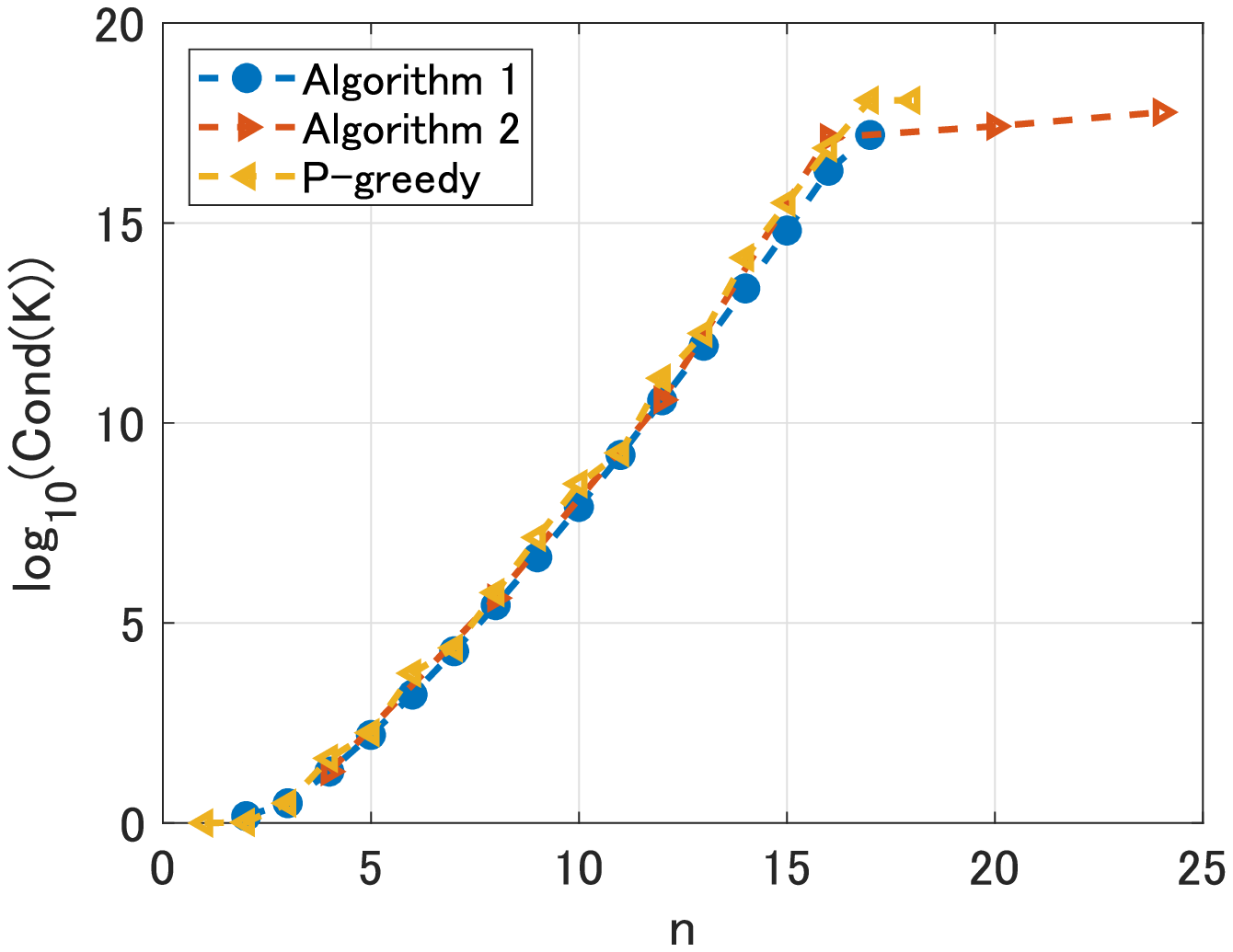}
\caption{Condition numbers of the kernel matrices for the Gaussian kernel on $[-1,1]$ (Example~\ref{ex:kernel_Gaussian}-1).}
\label{fig:Gauss1D_cnd}
\end{minipage}
\quad
\begin{minipage}[t]{0.48\linewidth}
\includegraphics[width = \linewidth]{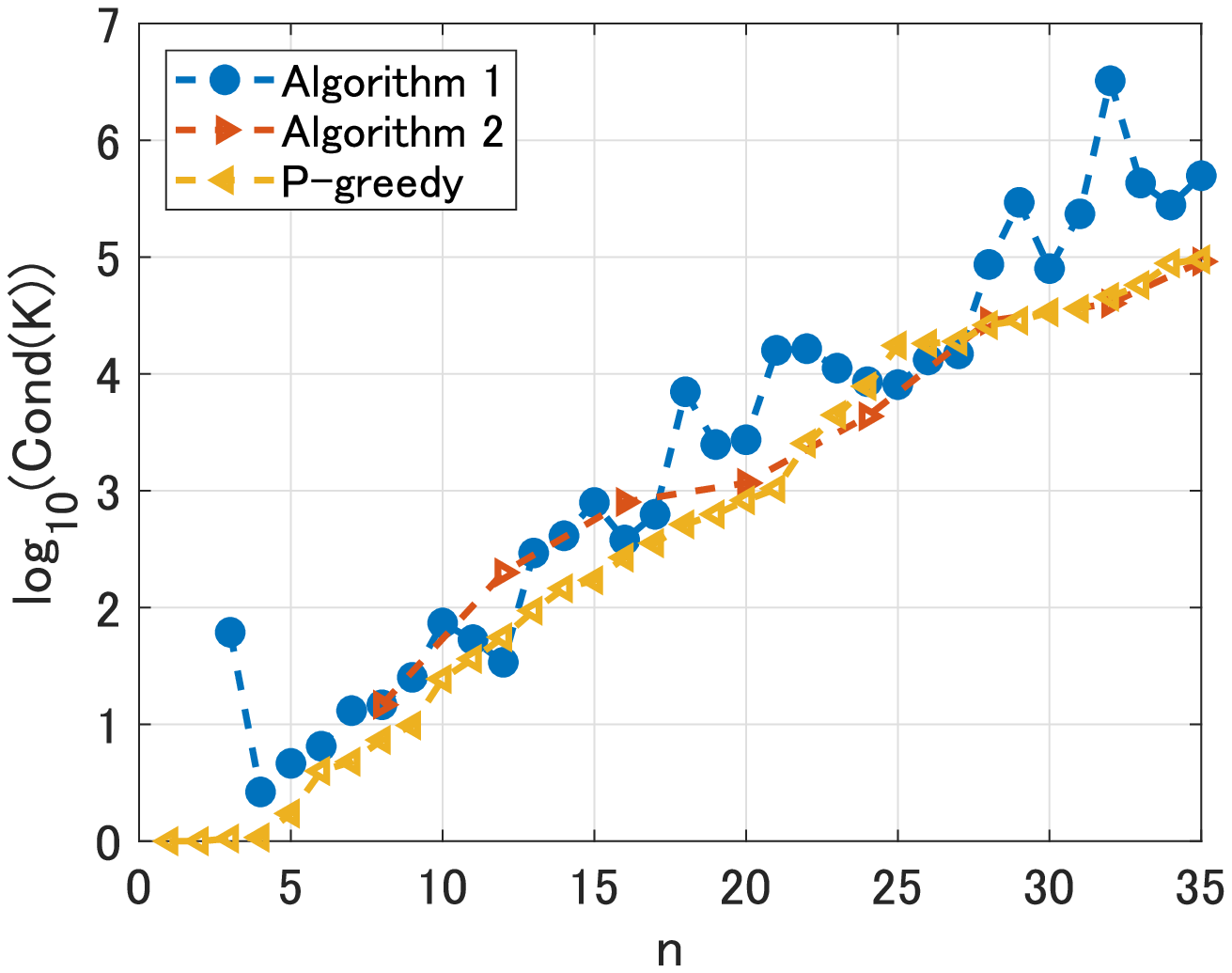}
\caption{Condition numbers of the kernel matrices for the Gaussian kernel on $[-1,1]^2 \subset \mathbf{R}^{2}$ (Example~\ref{ex:kernel_Gaussian}-2).}
\label{fig:Gauss2D_sqr_cnd}
\end{minipage}

\end{figure}

\begin{figure}[t]

\centering
\begin{minipage}[t]{0.48\linewidth}
\includegraphics[width = \linewidth]{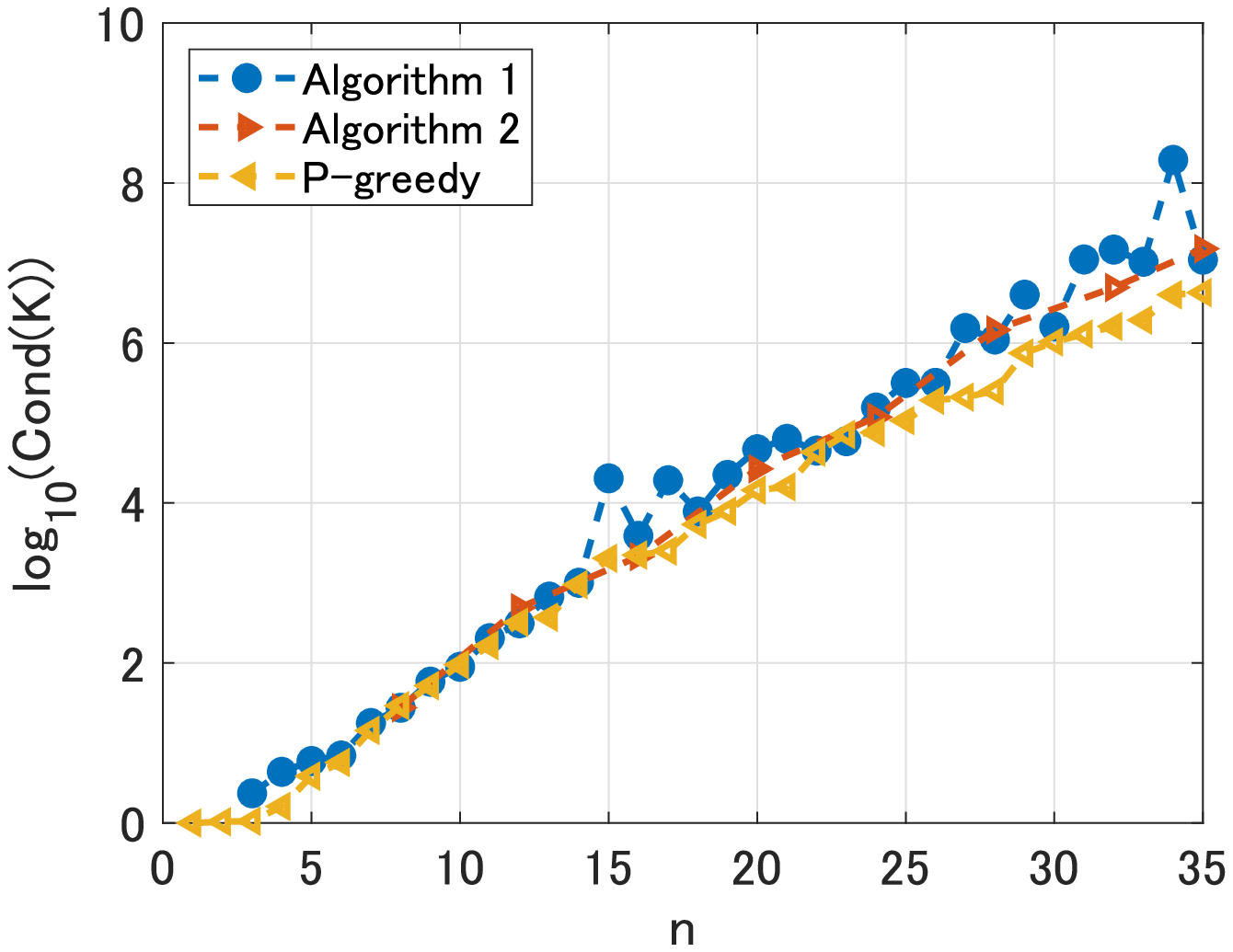}
\caption{Condition numbers of the kernel matrices for the Gaussian kernel on $\triangle \subset \mathbf{R}^{2}$ (Example~\ref{ex:kernel_Gaussian}-3).}
\label{fig:Gauss2D_tri_cnd}
\end{minipage}
\quad
\begin{minipage}[t]{0.48\linewidth}
\includegraphics[width = \linewidth]{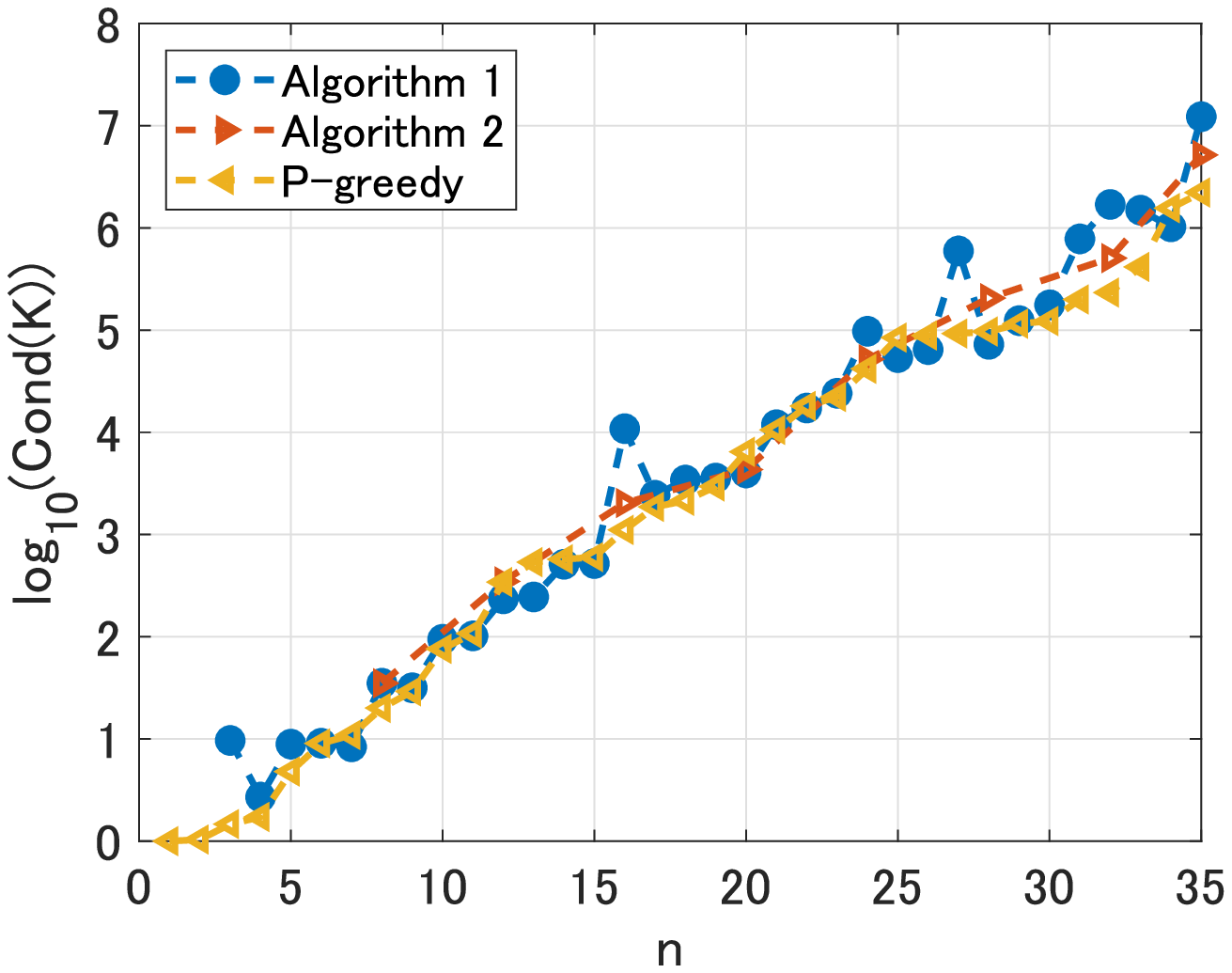}
\caption{Condition numbers of the kernel matrices for the Gaussian kernel on $D \subset \mathbf{R}^{2}$ (Example~\ref{ex:kernel_Gaussian}-4).}
\label{fig:Gauss2D_dsk_cnd}
\end{minipage}
\end{figure}

%%%%% Computation times
\begin{figure}[t]

\centering
\begin{minipage}[t]{0.48\linewidth}
\includegraphics[width = \linewidth]{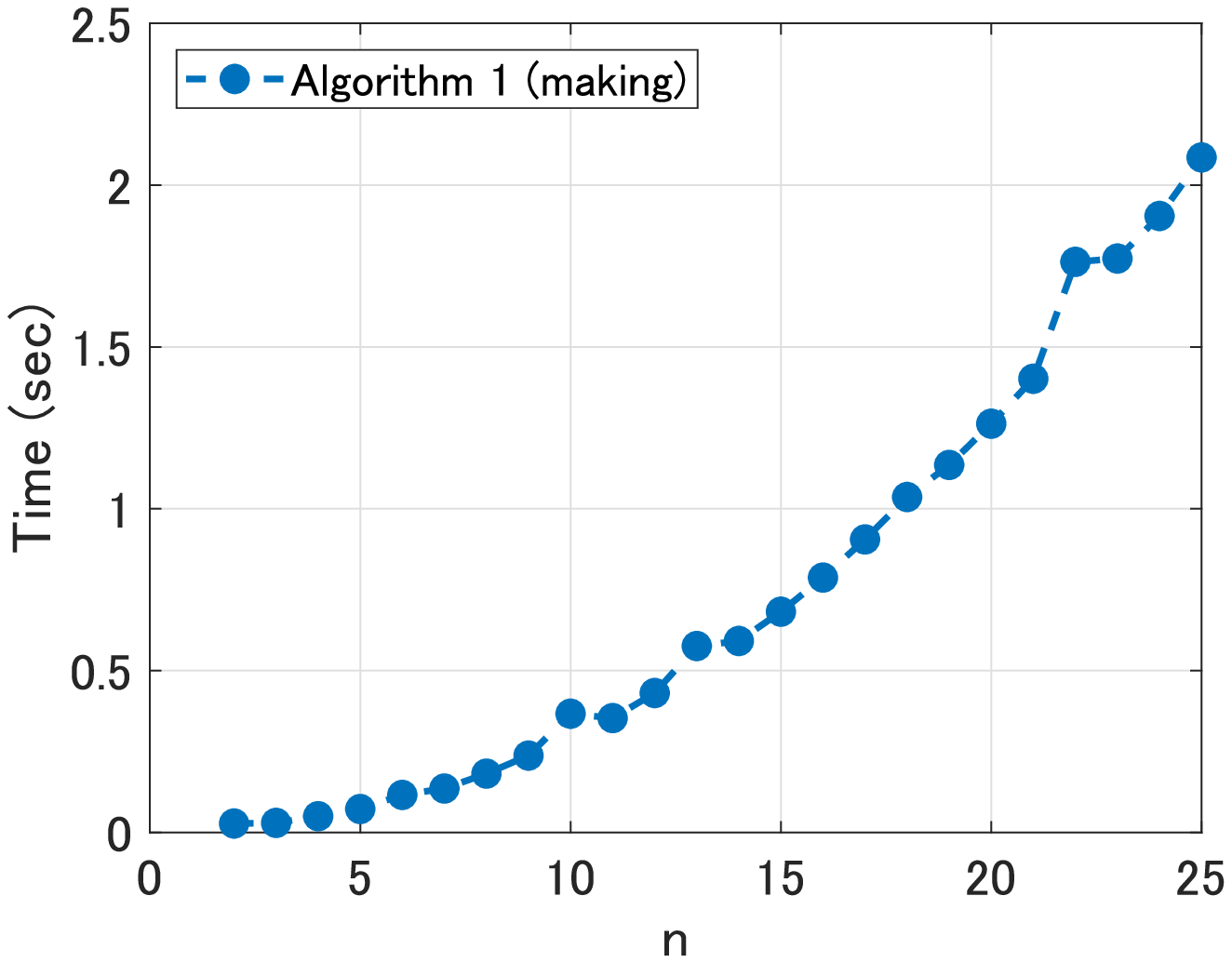}
\end{minipage}
\begin{minipage}[t]{0.48\linewidth}
\includegraphics[width = \linewidth]{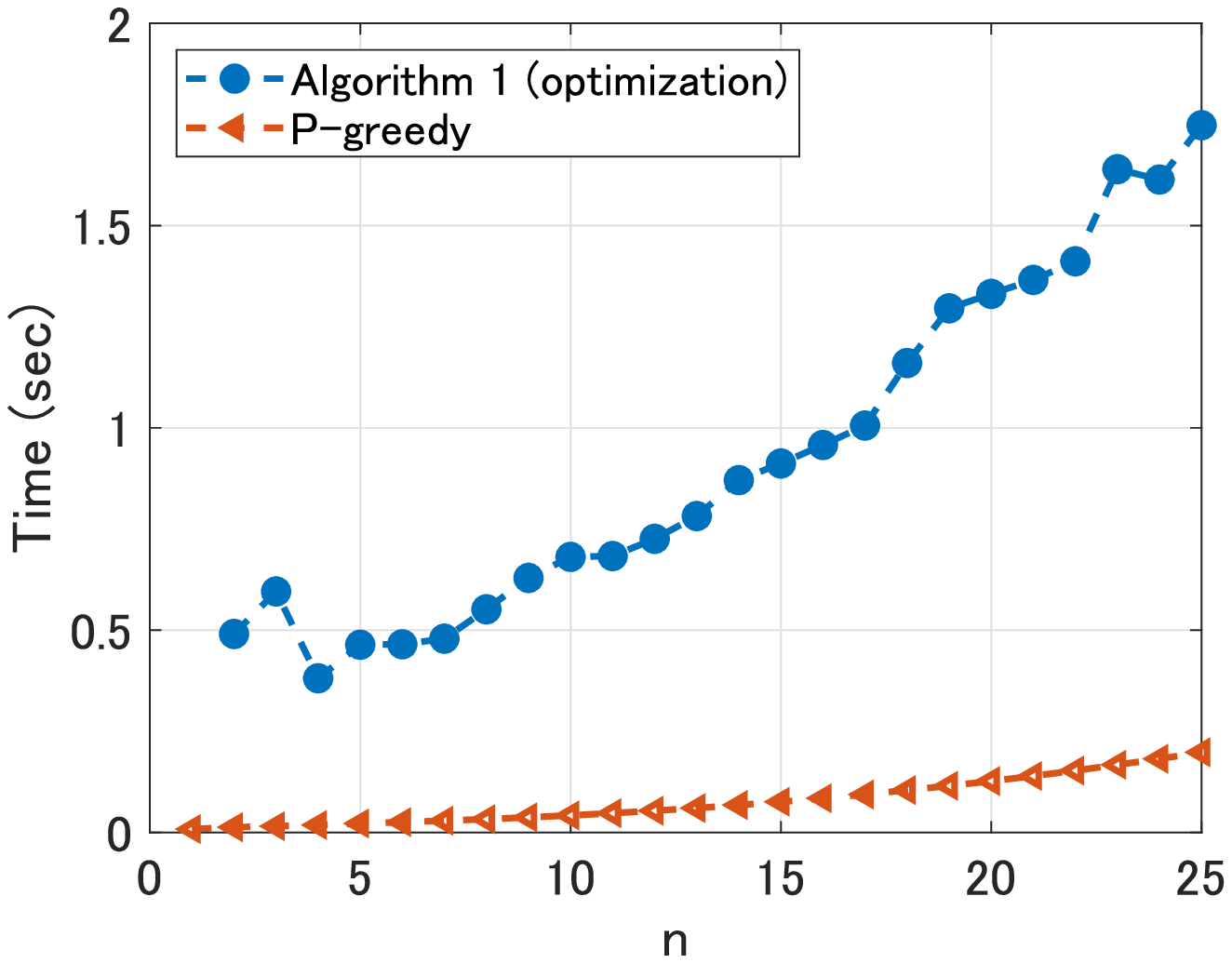}
\end{minipage}

\caption{Computation times for Brownian kernel on $[0,1]$ (Example~\ref{ex:kernel_Brownian}).
Left: the times for making the SOCP instances of Algorithm~\ref{alg:gen_points}, 
Right: the times for executing the SOCP optimizer and the $P$-greedy algorithm.}
\label{fig:Brown_time}

\begin{minipage}[t]{0.48\linewidth}
\includegraphics[width = \linewidth]{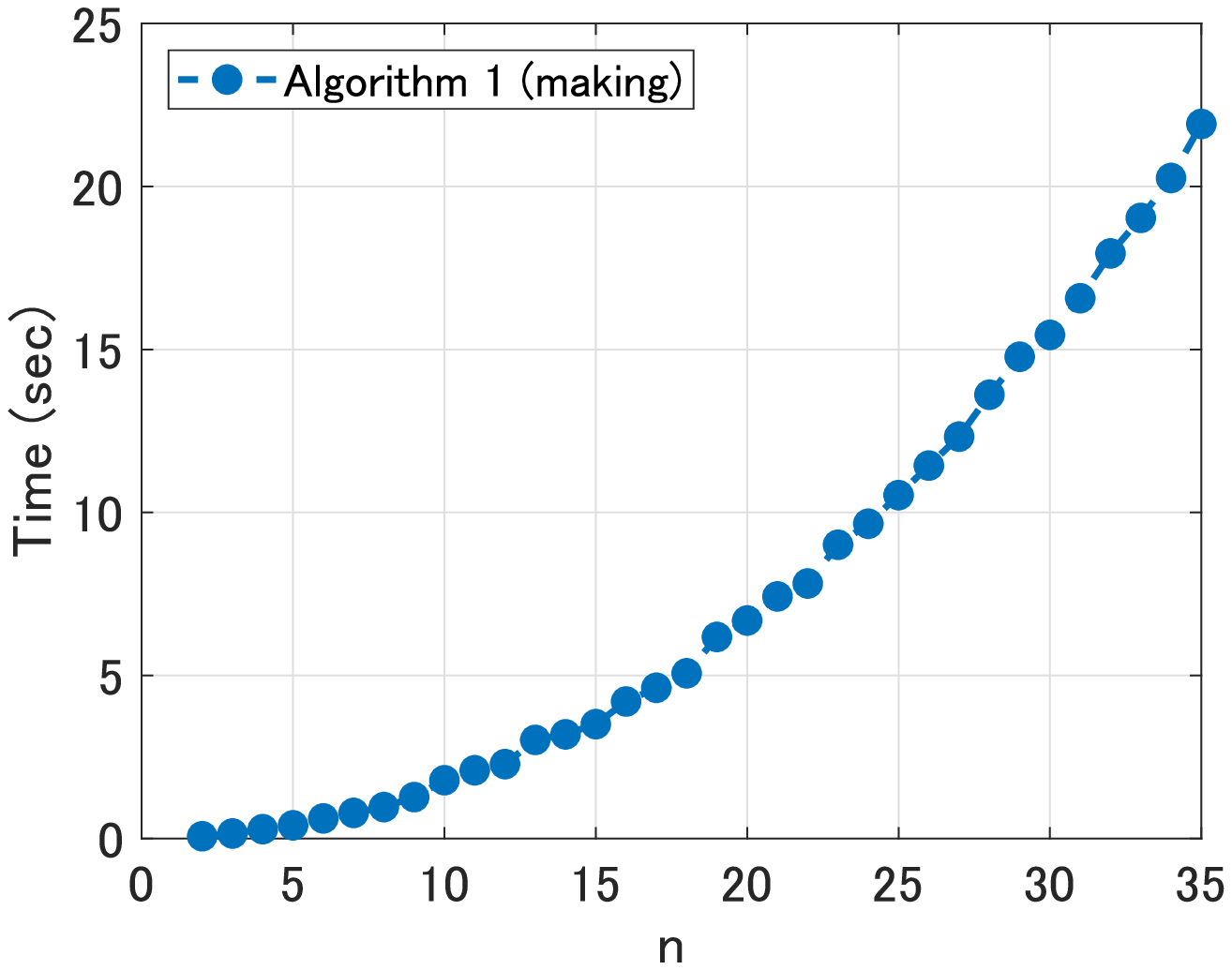}
\end{minipage}
\begin{minipage}[t]{0.48\linewidth}
\includegraphics[width = \linewidth]{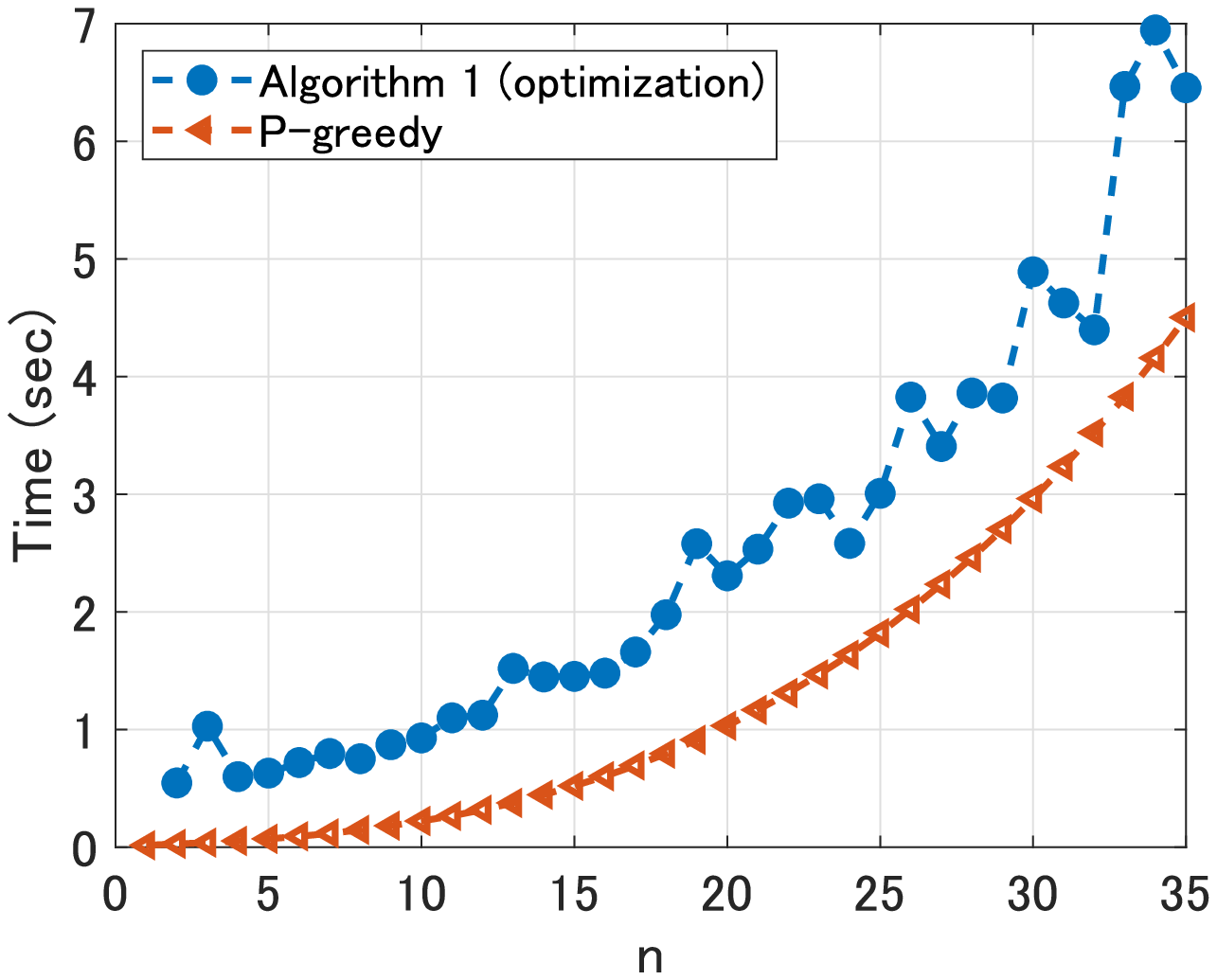}
\end{minipage}

\caption{Computation times for the spherical inverse multiquadric kernel on $S^{2}$ (Example~\ref{ex:kernel_Spherical}).
Left: the times for making the SOCP instances of Algorithm~\ref{alg:gen_points}, 
Right: the times for executing the SOCP optimizer and the $P$-greedy algorithm.}
\label{fig:sphere_time}
\end{figure}

%%%%%%%%%% 
\begin{figure}[t]

\centering
\begin{minipage}[t]{0.32\linewidth}
\includegraphics[width = \linewidth]{fig_points_Gauss2D_sqr_n35.eps}
\end{minipage}
\begin{minipage}[t]{0.32\linewidth}
\includegraphics[width = \linewidth]{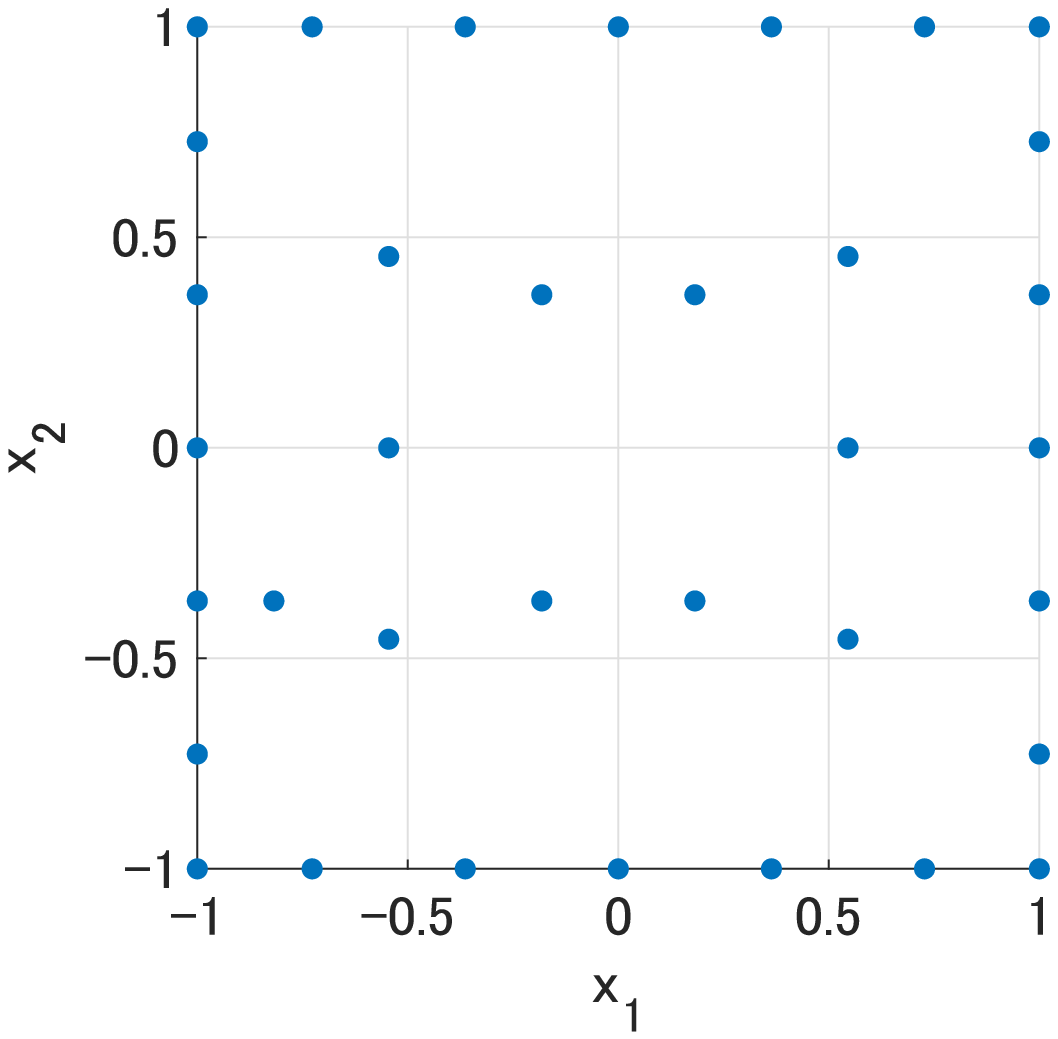}
\end{minipage}
\begin{minipage}[t]{0.32\linewidth}
\includegraphics[width = \linewidth]{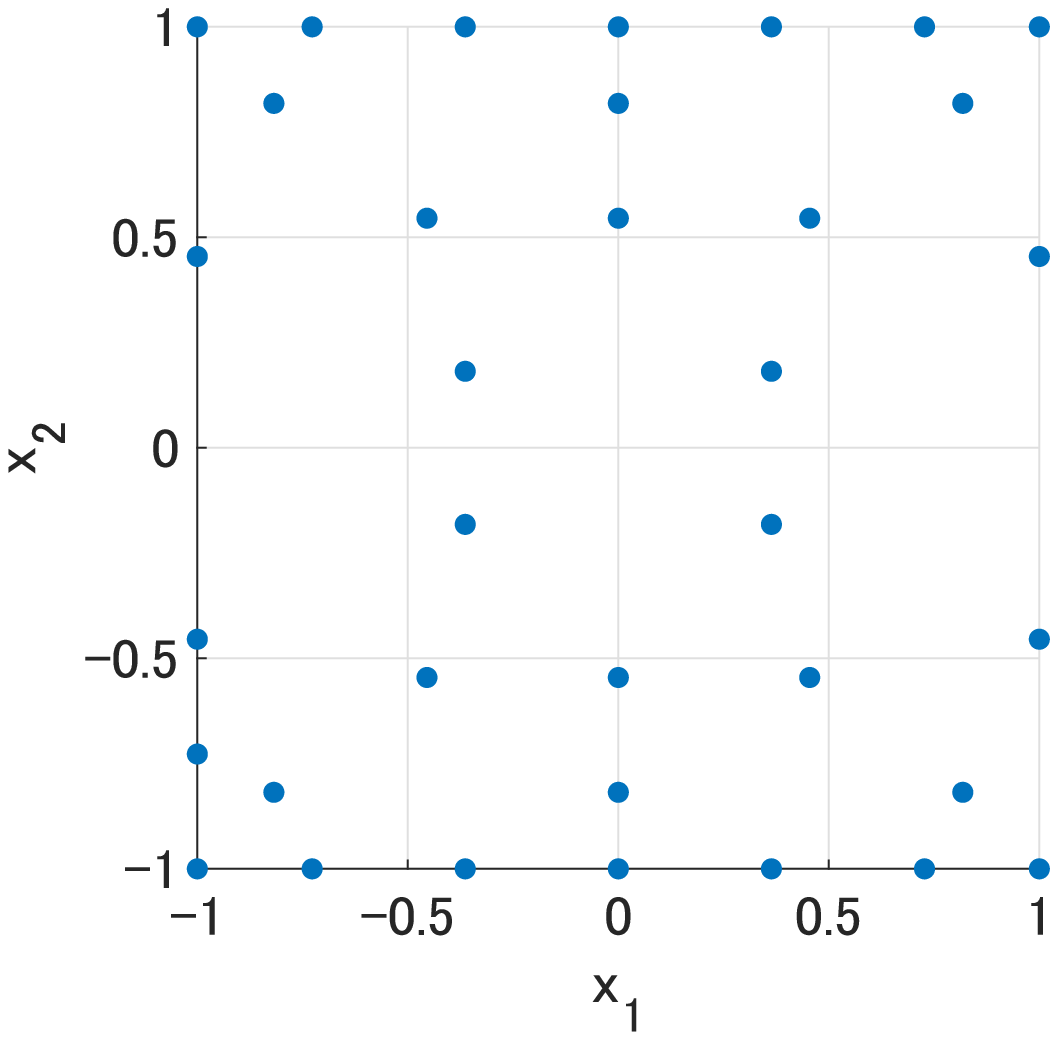}
\end{minipage}

\caption{Generated points on $[-1,1]^{2}$ for the Gaussian kernels (Example~\ref{ex:kernel_Gaussian}-2) in the case $n = 35$. 
Left: the same picture as the left one of Figure~\ref{fig:Gauss2D_n35} (a) (for comparison), 
Middle: the points for $\varepsilon = 1$, $\alpha = 2$, and the order of the eigenfunctions given by~\eqref{eq:Gauss_basis}, 
Right: the points for $\varepsilon = 1$, $\alpha = 1$, and the order of the eigenfunctions given by~\eqref{eq:Gauss_basis_changed}.}
\label{fig:Gauss2D_n35_another}

\end{figure}

%--------------------
\section{Concluding remarks}
\label{sec:concl}

We proposed Algorithms~\ref{alg:gen_points} and~\ref{alg:seq_gen_points}
generating point sets $\{ x_{1}, \ldots x_{n} \}$ 
by using the second order cone programming (SOCP) problem in~\eqref{eq:opt_expr_SOCP_pre_final}
for interpolation in reproducing kernel Hilbert spaces (RKHSs). 
Algorithm~\ref{alg:gen_points} is not a sequential algorithm 
in that it generates a point set by solving the SOCP problem at one sitting for a given number $n$. 
On the other hand,  
Algorithm~\ref{alg:seq_gen_points} is a sequential one 
that generates $n_{i}$ points in the $i$-th step 
for any sequence of positive integers $n_{1}, \ldots, n_{L}$. 
The SOCP problem is equivalent to 
the relaxed $D$-optimal experimental design problem in~\eqref{eq:equiv_appr_Dopt_relax}
derived from the maximization problem of 
the kernel matrix $\mathcal{K} = (K(x_{i}, x_{j}))_{ij}$, 
where $K$ is the kernel of the RKHS. 
Therefore, 
we can regard Algorithm~\ref{alg:gen_points} as an algorithm yielding approximate Fekete points. 
In the results of the numerical experiments, 
we observed that the proposed algorithms compete with the $P$-greedy algorithm in several cases, 
although they are a bit time-consuming and Algorithm~\ref{alg:gen_points} often yields worse results.
From the observation, we expect that the approximate Fekete points will 
also provide nearly optimal interpolation in RKHSs. 
To show this, 
further improvement of the algorithms and their theoretical analysis will be necessary.

%--------------------
% Funding
\section*{Funding information}

K. Tanaka is supported by the grant-in-aid of 
Japan Society of the Promotion of Science with 
KAKENHI Grant Number 17K14241.

%--------------------
% Acknowledgements
\section*{Acknowledgements}

The author thanks Takanori Maehara for his valuable comment 
about the Matlab programs used in the numerical experiments.
Thanks to the comment, 
much faster execution of the proposed algorithms has been realized 
than that in the initially submitted version of this article. 
Furthermore, 
the author gives thanks to the anonymous reviewers for their valuable suggestions about this article.

%--------------------
% Bibliographies
%\ifthenelse{\value{journal} = 0}{%

%}{%else
%\input{biblio_*****.tex}
%}

%--------------------
%Appendices
%\newpage
\appendix

%-----
\section{Proofs of the theorems for the SOC-representablity of the $D$-optimal design}
\label{sec:proofs_SOC}

\subsection{Proof of Theorem~\ref{thm:opt_expr_det}}

\begin{proof}%[Proof of Theorem~\ref{thm:opt_expr_det}]
Let $\tilde{Q} \in \mathbf{R}^{m \times m}$ 
and $\tilde{R} \in \mathbf{R}^{m \times \ell}$
be the matrices that give the QR-decomposition $H^{T} = \tilde{Q} \tilde{R}$. 
They have the following partitions: 
\begin{align}
\tilde{Q} = 
\begin{bmatrix}
Q_{\ast} & \hat{Q}
\end{bmatrix}, 
\quad
\tilde{R} = 
\begin{bmatrix}
S_{\ast}^{T} \\
O^{T}
\end{bmatrix}, 
\end{align}
where $Q_{\ast} \in \mathbf{R}^{m \times \ell}$, $\hat{Q} \in \mathbf{R}^{m \times (m-\ell)}$, and 
$S_{\ast} \in \mathbf{R}^{\ell \times \ell}$ is lower triangular.  
Then, $(Q, S) = (Q_{\ast}, S_{\ast})$ is a feasible solution of Problem~\eqref{eq:opt_expr_det}. 
In fact, 
\begin{align}
HQ_{\ast} 
= 
\tilde{R}^{T} \tilde{Q}^{T} {Q}_{\ast}
= 
\begin{bmatrix}
S_{\ast} & O
\end{bmatrix}
\begin{bmatrix}
Q_{\ast}^{T} \\ \hat{Q}^{T}
\end{bmatrix}
{Q}_{\ast}
= 
\begin{bmatrix}
S_{\ast} & O
\end{bmatrix}
\begin{bmatrix}
I_{\ell} \\ O^{T}
\end{bmatrix}
= S_{\ast},
\end{align}
and $\| Q_{\ast} \mathbf{e}_{i} \| = 1$ because $\tilde{Q}$ is an orthogonal matrix.  
Furthermore, because
\begin{align}
HH^{T} = \tilde{R}^{T} \tilde{Q}^{T} \tilde{Q} \tilde{R} = \tilde{R}^{T} \tilde{R} = {S}_{\ast}^{T} {S}_{\ast}, 
\end{align}
we have $\det (HH^{T}) = \det({S}_{\ast}^{T} {S}_{\ast}) = (\det {S}_{\ast})^{2}$. 

Therefore, what remains is to show that $(\det S)^{2} \leq \det (HH^{T})$ 
for any feasible solution $(Q, S)$ of Problem~\eqref{eq:opt_expr_det}. 
This inequality holds true because we have
\begin{align}
(\det S)^{2} = (\det (HQ))^{2} \leq \det(HH^{T}) \det(Q^{T}Q) \leq \det(HH^{T}). 
\label{eq:det_ineqs}
\end{align}
The first inequality in~\eqref{eq:det_ineqs} is the Cauchy-Schwarz inequality for determinants {\cite[Exercise 12.15]{bib:AbadirMagnus_MatAlg}} (see Proposition~\ref{prop:CS_mat} in Appendix~\ref{sec:CS_mat}). 
Moreover, the second inequality in~\eqref{eq:det_ineqs} is derived by the Hadamard inequality {\cite[Exercise 12.26]{bib:AbadirMagnus_MatAlg}}
and the last constraint of Problem~\eqref{eq:opt_expr_det}
as follows:
\begin{align}
\det(Q^{T}Q) \leq \prod_{i=1}^{\ell} \left( \text{The } (i,i) \text{ component of } Q^{T}Q \right) = \prod_{i=1}^{\ell} \| Q \mathbf{e}_{i} \|^{2} \leq 1. 
\end{align}
\end{proof}

%----------
\subsection{Proof of Theorem~\ref{thm:SOCP_expr_det}}

\begin{proof}%[Proof of Theorem~\ref{thm:SOCP_expr_det}]
We show that every feasible solution $(Z, T, G) = ((z_{ij}), (t_{ij}), (g_{ij}))$ of Problem~\eqref{eq:opt_expr_SOCP} 
yields a feasible solution $(Q, S) = ((q_{ij}), (s_{ij}))$ of Problem~\eqref{eq:opt_expr_det} in which $g_{jj} = s_{jj}^{2}$ for all $j = 1,\ldots, \ell$, 
and vice versa. 
If we do these, we have 
\(
\mathrm{OPT}_{2}(\{ a_{i} \}, \, w)
= \det J
= (\det S)^{2}
= \mathrm{OPT}_{1}(H)
\), 
from which the conclusion follows.

First, suppose that $(Z, T, G) = ((z_{ij}), (t_{ij}), (g_{ij}))$ is a feasible solution of Problem~\eqref{eq:opt_expr_SOCP}. 
Then, we define $(Q, S) = ((q_{ij}), (s_{ij}))$ by
\begin{align}
q_{ij} = 
\begin{cases}
\displaystyle \frac{z_{ij}}{\sqrt{w_{i}} \sqrt{g_{jj}}} & (w_{i} > 0, \ g_{jj} > 0), \\
0 & (\text{otherwise}), 
\end{cases}
\qquad 
s_{ij} = 
\begin{cases}
\displaystyle \frac{g_{ij}}{\sqrt{g_{jj}}} & (g_{jj} > 0), \\
0 & (\text{otherwise}).
\end{cases}
\notag
\end{align}
Then, clearly $S$ is lower triangular. 
For $j$ with $g_{jj} > 0$, we have
\begin{align}
(HQ) \, \mathbf{e}_{j} 
& = \sum_{i=1}^{m} \sqrt{w_{i}} \, q_{ij} \, a_{i} 
= \sum_{i: \, w_{i} > 0} \sqrt{w_{i}} \, q_{ij} \, a_{i} 
= \frac{1}{\sqrt{g_{jj}}} \sum_{i: \, w_{i} > 0} z_{ij} \, a_{i} \notag \\
& = \frac{1}{\sqrt{g_{jj}}} \sum_{i=1}^{m} z_{ij} \, a_{i} 
= \frac{1}{\sqrt{g_{jj}}} G \, \mathbf{e}_{j}
= S \, \mathbf{e}_{j}. \notag
\end{align}
For the fourth equality, 
we used the fact that $w_{i} = 0 \Rightarrow z_{ij} = 0$, 
which follows from the constraint $z_{ij}^{2} \leq t_{ij} w_{i}$ in Problem~\eqref{eq:opt_expr_SOCP}. 
For $j$ with $g_{jj} = 0$, we have $(HQ) \, \mathbf{e}_{j} = \boldsymbol{0} = S \, \mathbf{e}_{j}$. 
Hence $HQ = S$ holds true. 
Furthermore, for $j$ with $g_{jj} > 0$, we have
\begin{align}
\| Q \, \mathbf{e}_{j} \|^{2} 
= \sum_{i=1}^{m} q_{ij}^{2} 
= \sum_{i: \, w_{i} > 0} \frac{1}{g_{jj}} \frac{z_{ij}^{2}}{w_{i}} 
\leq \frac{1}{g_{jj}} \sum_{i: \, w_{i} > 0} t_{ij} \leq 1,
\notag
\end{align}
and for $j$ with $g_{jj} > 0$, we have $\| Q \, \mathbf{e}_{j} \|^{2} = 0 \leq 1$. 
Consequently, $(Q, S)$ is a feasible solution of Problem~\eqref{eq:opt_expr_det} with $g_{jj} = s_{jj}^{2}$. 

Next, suppose that $(Q, S) = ((q_{ij}), (s_{ij}))$ is a feasible solution of Problem~\eqref{eq:opt_expr_det}. 
Then, we define $(Z, T, G) = ((z_{ij}), (t_{ij}), (g_{ij}))$ by
\begin{align}
z_{ij} = \sqrt{w_{i}} \, s_{jj} \, q_{ij}, \qquad
t_{ij} = s_{jj}^{2} \, q_{ij}^{2}, \qquad
g_{ij} = s_{jj} \, s_{ij}.
\notag 
\end{align}
Then, clearly $G$ is lower triangular. 
Furthermore, we have
\begin{align}
(a_{1}, \ldots, a_{m}) \, Z \, \mathbf{e}_{j}
= \sum_{i=1}^{m} a_{i} \, z_{ij} 
= s_{jj} \sum_{i=1}^{m} \sqrt{w_{i}} \, a_{i} \, q_{ij} 
= s_{jj} HQ \, \mathbf{e}_{j} 
= s_{jj} S \, \mathbf{e}_{j} 
= G \, \mathbf{e}_{j}, 
\notag
\end{align}
\(
z_{ij}^{2} = {w_{i}} \, s_{jj}^{2} \, q_{ij}^{2} = {w_{i}} \, t_{ij}
\),
and 
\begin{align}
\sum_{i=1}^{m} t_{ij} = s_{jj}^{2} \sum_{i=1}^{m} q_{ij}^{2} \leq s_{jj}^{2} = g_{jj}. 
\notag
\end{align}
Therefore, 
$(Z, T, G)$ is a feasible solution of Problem~\eqref{eq:opt_expr_SOCP} with $s_{jj}^{2} = g_{jj}$.  
\end{proof}

%----------
\subsection{Proof of Theorem~\ref{thm:prod_SOCP}}

\begin{proof}%[Proof of Theorem~\ref{thm:prod_SOCP}]
We show the assertion only in the case that $\ell$ is even, because the other case can be shown similarly. 
If $g_{j} = 0$ for some $j$, we can deduce that $u_{1} = 0$. 
Therefore, it suffices to consider the case that $g_{j} > 0$ for $j = 1,\ldots, \ell$. 

First, we show that $u_{1} \leq \left(\prod_{j=1}^{\ell} g_{j} \right)^{1/\ell}$ for any feasible solution $\{ u_{i} \}$. 
If $u_{1} = 0$, the conclusion is trivial. 
Then, we assume that $u_{1} > 0$. 
From the constraints with $1 \leq i \leq 2^{p-1}-1$ in~\eqref{eq:product_SOCP_even}, we have
\begin{gather*}
u_{1}^{2^{p}} \leq u_{2}^{2^{p-1}} u_{3}^{2^{p-1}}, \\
u_{2}^{2^{p-1}} \leq u_{4}^{2^{p-2}}  u_{5}^{2^{p-2}} , \quad 
u_{3}^{2^{p-1}} \leq u_{6}^{2^{p-2}}  u_{7}^{2^{p-2}} , \\
\iddots \qquad \qquad \qquad \qquad \vdots \qquad \qquad \qquad \qquad \ddots \\
u_{2^{p-2}}^{2^{2}} \leq u_{2^{p-1}}^{2}  u_{2^{p-1}+1}^{2} , \quad 
u_{2^{p-2}+1}^{2^{2}} \leq u_{2^{p-1}+2}^{2}  u_{2^{p-1}+3}^{2} , \quad \ldots \quad 
u_{2^{p-1} - 1}^{2^{2}} \leq u_{2^{p}-2}^{2}  u_{2^{p}-1}^{2}. 
\end{gather*}
Then, taking the product of all these inequalities, we have 
\begin{align}
u_{1}^{2^{p}} \leq \prod_{i=2^{p-1}}^{2^{p}-1} u_{i}^{2}. 
\label{eq:prod_SOCP_key1}
\end{align}
Similarly, taking the product of all the constraints with $2^{p-1} \leq i \leq 2^{p} - 1$, we have
\begin{align}
\prod_{i=2^{p-1}}^{2^{p}-1} u_{i}^{2} \leq \left(\prod_{j=1}^{\ell} g_{j} \right) u_{1}^{2^{p} - \ell}.
\label{eq:prod_SOCP_key2}
\end{align}
Therefore, it follows from \eqref{eq:prod_SOCP_key1} and \eqref{eq:prod_SOCP_key2} that 
\begin{align}
u_{1}^{2^{p}} \leq \left(\prod_{j=1}^{\ell} g_{j} \right) u_{1}^{2^{p} - \ell}
\iff
u_{1} \leq \left(\prod_{j=1}^{\ell} g_{j} \right)^{1/\ell}.
\notag 
\end{align}

Next, we show that there exists a feasible solution $\{ u_{i} \}$ with 
\(
u_{1} = \left(\prod_{j=1}^{\ell} g_{j} \right)^{1/\ell}.
\)
Set $u_{1}$ by this equality and set the other variables by 
\begin{align}
\begin{array}{ll}
 u_{i} = u_{1} & (i=2^{p-1} + \ell/2, \ldots, 2^{p}-1), \\ 
 u_{i} = \sqrt{g_{2i - 2^{p} + 1} g_{2i - 2^{p} + 2}} & (i=2^{p-1}, \ldots, 2^{p-1} + \ell/2 - 1), \\
 u_{i} = \sqrt{u_{2i} u_{2i+1}} & (i=2, \ldots, 2^{p-1}-1). 
\end{array}
\notag
\end{align}
Then, in a similar manner to that for deriving \eqref{eq:prod_SOCP_key1} and \eqref{eq:prod_SOCP_key2}, 
we have
\begin{align}
u_{2}^{2^{p-1}} u_{3}^{2^{p-1}} 
= 
\left(\prod_{j=1}^{\ell} g_{j} \right) u_{1}^{2^{p} - \ell}
= 
u_{1}^{\ell} u_{1}^{2^{p} - \ell}
=
u_{1}^{2^{p}},
\end{align}
which guarantees the solution $\{ u_{i} \}$ is feasible. 
\end{proof}

%-----
\section{Cauchy-Schwarz inequality for determinants}
\label{sec:CS_mat}

\begin{lem}[{\cite[Exercise 12.7]{bib:AbadirMagnus_MatAlg}}]
\label{lem:det_sum_ineq}
Let $\ell$ be a non-negative integer and
let $C, D \in \mathbf{R}^{\ell \times \ell}$ be $\ell \times \ell$ 
positive semi-definite matrices.
Then, the following inequality holds true: 
\begin{align}
\det(C+D) \geq \det C + \det D. \notag
\end{align}
\end{lem}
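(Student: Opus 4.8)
The plan is to reduce to the positive definite case and then pass to the limit. First I would treat the case where $C$ is positive definite, hence invertible. Writing $C = C^{1/2} C^{1/2}$ with the symmetric positive definite square root $C^{1/2}$, I would use the factorization $C + D = C^{1/2}\left( I + C^{-1/2} D C^{-1/2} \right) C^{1/2}$ to obtain
\begin{align}
\det(C + D) = \det C \cdot \det\!\left( I + C^{-1/2} D C^{-1/2} \right). \notag
\end{align}
The matrix $M := C^{-1/2} D C^{-1/2}$ is symmetric positive semidefinite, so its eigenvalues $\mu_1, \ldots, \mu_\ell$ are non-negative; moreover $\det(I+M) = \prod_{i=1}^{\ell}(1+\mu_i)$ and $\det M = \prod_{i=1}^{\ell} \mu_i = \det D / \det C$.

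Second, the key elementary inequality is $\prod_{i=1}^{\ell}(1+\mu_i) \geq 1 + \prod_{i=1}^{\ell}\mu_i$ for non-negative $\mu_i$, valid for $\ell \geq 1$. I would justify it by expanding the left-hand side as the sum $e_0 + e_1 + \cdots + e_\ell$ of the elementary symmetric polynomials in the $\mu_i$; since each $e_k \geq 0$, the whole sum dominates $e_0 + e_\ell = 1 + \prod_{i} \mu_i$. Multiplying through by $\det C > 0$ then gives
\begin{align}
\det(C+D) = \det C \cdot \prod_{i=1}^{\ell}(1+\mu_i) \geq \det C \left( 1 + \prod_{i=1}^{\ell}\mu_i \right) = \det C + \det D. \notag
\end{align}

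Third, to handle the general positive semidefinite case, where $C$ may be singular and the square root argument above is unavailable, I would use a perturbation argument. For $\varepsilon > 0$ the matrix $C + \varepsilon I$ is positive definite, so the already-established inequality yields $\det\!\left((C+\varepsilon I) + D\right) \geq \det(C + \varepsilon I) + \det D$. Since the determinant is a polynomial in the matrix entries and therefore continuous, letting $\varepsilon \to 0^{+}$ gives $\det(C+D) \geq \det C + \det D$.

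I expect the work to be mostly bookkeeping rather than genuinely difficult: the only delicate points are applying the eigenvalue inequality correctly, in particular noting that it needs $\ell \geq 1$ (which is the range relevant to the applications in this paper), and confirming that the limiting argument is legitimate, which is immediate from continuity of the determinant. A symmetric alternative, should the square root be inconvenient, is to invoke simultaneous diagonalization by congruence of the pair $(C, D)$ when $C$ is positive definite, which leads to the same product inequality.
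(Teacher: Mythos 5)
Your proposal is correct and follows essentially the same route as the paper: both reduce via the congruence $C+D = C^{1/2}(I + C^{-1/2}DC^{-1/2})C^{1/2}$ to the scalar inequality $\prod_i(1+\mu_i) \geq 1 + \prod_i \mu_i$ for the non-negative eigenvalues of $C^{-1/2}DC^{-1/2}$, proved by expanding the product. The only (immaterial) difference is in the degenerate case: the paper dismisses the situation where both matrices are singular as trivial and otherwise assumes $C$ non-singular by symmetry, whereas you handle a singular $C$ by perturbing to $C+\varepsilon I$ and using continuity of the determinant; both are valid.
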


\begin{proof}
If $C$ and $D$ are singular, the inequality is trivial. 
Therefore, we assume that $C$ is non-singular without loss of generality.
Let $S: = C^{-1/2} D C^{-1/2}$. 
Then, because
\begin{align}
\det(C+D) - \det C - \det D
& =
\det(C^{1/2} (I + S) C^{1/2}) - \det C - \det (C^{1/2} S C^{1/2}) \notag \\
& = 
\det C \, (\det(I + S) - 1 - \det S), 
\notag
\end{align}
it suffices to show that $\det(I + S) - 1 - \det S \geq 0$ for any $S \in \mathbf{R}^{\ell \times \ell}$ with $S \succeq O$. 
By letting $\lambda_{1}, \ldots, \lambda_{\ell} \geq 0$ be the eigenvalues of $S$, this inequality is reduced to
\begin{align}
\prod_{i=1}^{\ell} (1 + \lambda_{i}) - 1 - \prod_{i=1}^{\ell} \lambda_{i} \geq 0, 
\notag
\end{align}
which can be proved by expanding the product $\prod_{i=1}^{\ell} (1 + \lambda_{i})$. 
\end{proof}

\begin{prop}[Cauchy-Schwarz inequality for determinants {\cite[Exercise 12.15]{bib:AbadirMagnus_MatAlg}}]
\label{prop:CS_mat}
Let $\ell$ and $m$ be non-negative integers with $\ell \leq m$ and
let $A, B \in \mathbf{R}^{m \times \ell}$ be $m \times \ell$ matrices. 
Then, the following inequality holds true:
\begin{align}
(\det (A^{T} B))^{2} \leq \det (A^{T} A) \, \det (B^{T} B). 
\label{eq:CS_ineq_mat}
\end{align}
\end{prop}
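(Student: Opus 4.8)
The plan is to reduce the inequality to the monotonicity of the determinant over positive semidefinite matrices, which is already available to me through Lemma~\ref{lem:det_sum_ineq}. The key device is the orthogonal projection onto the column space of $B$, equivalently the Schur complement of $B^{T}B$ in the block Gram matrix $\left[\begin{smallmatrix} A^{T}A & A^{T}B \\ B^{T}A & B^{T}B \end{smallmatrix}\right]$.

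First I would dispose of the degenerate case. If $B^{T}B$ is singular, then $\mathrm{rank}(B) < \ell$, so $\mathrm{rank}(A^{T}B) \leq \mathrm{rank}(B) < \ell$, whence the $\ell \times \ell$ matrix $A^{T}B$ satisfies $\det(A^{T}B) = 0$; since also $\det(B^{T}B) = 0$, both sides of \eqref{eq:CS_ineq_mat} vanish and the inequality holds. Hence I may assume that $B^{T}B$ is nonsingular. Then I would introduce $P := B (B^{T}B)^{-1} B^{T}$, the orthogonal projection onto the range of $B$. Since $I - P$ is symmetric and idempotent, it is positive semidefinite, and therefore
\[
A^{T}A - A^{T}B (B^{T}B)^{-1} B^{T}A = A^{T}(I - P) A = \left( (I - P) A \right)^{T} \left( (I - P) A \right) \succeq O.
\]
Writing $M := A^{T}B (B^{T}B)^{-1} B^{T}A$, both $M$ and $A^{T}A - M$ are positive semidefinite, so Lemma~\ref{lem:det_sum_ineq} applied to $A^{T}A = M + (A^{T}A - M)$ gives $\det(A^{T}A) \geq \det M + \det(A^{T}A - M) \geq \det M$, the last step using that a positive semidefinite matrix has nonnegative determinant.

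It then remains to evaluate $\det M$. Since $A^{T}B$, $(B^{T}B)^{-1}$, and $B^{T}A = (A^{T}B)^{T}$ are all $\ell \times \ell$, multiplicativity of the determinant yields
\[
\det M = \det(A^{T}B) \, \det\!\left( (B^{T}B)^{-1} \right) \det(B^{T}A) = \frac{\left( \det(A^{T}B) \right)^{2}}{\det(B^{T}B)}.
\]
Combining this with $\det(A^{T}A) \geq \det M$ and clearing the positive denominator $\det(B^{T}B)$ produces \eqref{eq:CS_ineq_mat}. I do not expect a serious obstacle: the single conceptual step is recognizing that $I - P$ is positive semidefinite, which is exactly what feeds Lemma~\ref{lem:det_sum_ineq}; the only point needing genuine care is the singular case, where I must verify that the left-hand side also vanishes rather than merely being bounded.
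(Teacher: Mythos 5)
Your proof is correct and follows essentially the same route as the paper: both arguments split a Gram matrix into a positive semidefinite Schur-complement piece plus a projection remainder and then invoke Lemma~\ref{lem:det_sum_ineq}, the only cosmetic difference being that you project onto the range of $B$ and bound $\det(A^{T}A)$ from below, whereas the paper projects onto the range of $A$ and bounds $\det(B^{T}B)$. Your handling of the degenerate case (singular $B^{T}B$) is also sound and mirrors the paper's dismissal of the case $\det(A^{T}B)=0$.
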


\begin{proof}
If $\det (A^{T} B) = 0$, the inequality is trivial. 
Therefore, we assume that $\det (A^{T} B) \neq 0$. 
Then, $A$ and $B$ have full column rank, 
which implies that $A^{T} A$ and $B^{T} B$ are non-singular. 
Let $C, D \in \mathbf{R}^{\ell \times \ell}$ be given by 
\begin{align}
C &:= B^{T} A \, (A^{T} A)^{-1} A^{T} B, \notag \\
D &:= B^{T} B - C = B^{T} (I - A \, (A^{T} A)^{-1} A^{T}) B. \notag
\end{align}
Then, clearly $C \succ O$ holds true. 
Furthermore, because $P := I - A \, (A^{T} A)^{-1} A^{T}$
satisfies $P^{2} = P$, we have $P \succeq O$ and hence $D \succeq O$. 
Then, using Lemma~\ref{lem:det_sum_ineq}, we have 
\begin{align}
\det (B^{T} B) = \det(C+D) \geq \det C + \det D \geq \det C 
= (\det (A^{T} B))^{2} \, (\det A^{T} A)^{-1},
\notag
\end{align}
which is equivalent to~\eqref{eq:CS_ineq_mat}. 
\end{proof}

%-----
\section{Remarks on implementation of SOCP \eqref{eq:opt_expr_SOCP_pre_final} by MOSEK}
\label{sec:rem_impl_SOCP}

\subsection{MOSEK}

As a software to solve SOCP problems, 
we use the MOSEK Optimization Toolbox for MATLAB  8.1.0.56, 
which is provided by MOSEK ApS in Denmark (\url{https://www.mosek.com/}, last accessed on 19 October 2018).

\subsubsection{Expression of rotated second order cones}

Recall that $p = \lceil \log_{2} \ell \rceil$. 
In SOCP~\eqref{eq:opt_expr_SOCP_pre_final} in which $\ell$ is even, 
we need to consider the rotated second order cone constraints 
\begin{align}
& z_{ij}^{2} \leq t_{ij} w_{i} \qquad (i=1,\ldots,m, j=1,\ldots, \ell), \label{eq:orig_cone_even_1} \\
& u_{i}^{2} \leq u_{2i} u_{2i+1} \qquad (i=1,\ldots, 2^{p-1}-1), \label{eq:orig_cone_even_2} \\
& u_{i}^{2} \leq \tilde{g}_{2i-2^{p}+1} \tilde{g}_{2i-2^{p}+2} \qquad (i = 2^{p-1}, \ldots, 2^{p-1} + \ell/2 - 1). \label{eq:orig_cone_even_3} 
\end{align}
Furthermore, 
in the counterpart in which $\ell$ is odd, 
we need to consider the constraints 
\begin{align}
& z_{ij}^{2} \leq t_{ij} w_{i} \qquad (i=1,\ldots,m, j=1,\ldots, \ell), \label{eq:orig_cone_odd_1} \\
& u_{i}^{2} \leq u_{2i} u_{2i+1} \qquad (i=1,\ldots, 2^{p-1}-1), \label{eq:orig_cone_odd_2} \\
& u_{i}^{2} \leq \tilde{g}_{2i-2^{p}+1} \tilde{g}_{2i-2^{p}+2} \qquad (i = 2^{p-1}, \ldots, 2^{p-1} + (\ell-3)/2), \label{eq:orig_cone_odd_3} \\
& u_{i}^{2} \leq \tilde{g}_{2i-2^{p}+1} u_{1} \qquad (i = 2^{p-1} + (\ell-1)/2). \label{eq:orig_cone_odd_4} 
\end{align}

However, MOSEK permits only the expression $\eta_{1}^{2} + \cdots + \eta_{N}^{2} \leq 2 \xi \zeta$ for a rotated second order cone, 
where we just need the case that $N=1$. 
Therefore, we employ the following variable transformations: 
\begin{align}
& w_{i} = 2 \hat{w}_{i} \quad (i = 1,\ldots, m), \\
& \tilde{g}_{i} = \sqrt{2} \hat{g}_{i} \quad (i = 1, \ldots, \ell-1), \\
& \tilde{g}_{\ell} = 
\begin{cases}
\sqrt{2} \hat{g}_{\ell} & (\ell \text{ is even}), \\
2 \hat{g}_{\ell} & (\ell \text{ is odd}). \\ 
\end{cases} 
\end{align}
We do not change $u_{i}$ because of the following reason. 
If we change the constraints $u_{i}^{2} \leq u_{2i} u_{2i+1}$ to $u_{i}^{2} \leq 2 u_{2i} u_{2i+1}$, 
they just change the optimal value by $2^{(p-1)2^{p-1}}$ times%
\footnote{Therefore the optimal value of SOCP~\eqref{eq:opt_expr_SOCP_pre_final} is 
$2^{(p-1)2^{p-1}} (\text{The optimal value of Problem~\eqref{eq:gen_appr_Dopt_relax}})^{1/\ell}$.}. 
Consequently, 
we deal with the SOCP problem
\begin{align}
\begin{array}{cll}
\displaystyle
\mathop{\text{maximize}}_{
\begin{subarray}{l}
\{u_{1}, \ldots, u_{2^{p}-1} \} \subset \mathbf{R}, \\
\{ \hat{g}_{1}, \ldots, \hat{g}_{\ell} \} \subset \mathbf{R}, \\ 
Z = (z_{ij}) \in \mathbf{R}^{m \times \ell}, \\ 
T = (t_{ij}) \in \mathbf{R}^{m \times \ell}, \\ 
\{\hat{w}_{1}, \ldots, \hat{w}_{m} \} \subset \mathbf{R} 
\end{subarray}} & 
u_{1} & \\
\text{subject to} & (a_{1}, \ldots, a_{m}) \, Z = 
\begin{bmatrix}
\sqrt{2} \hat{g}_{1} & 0 & \cdots & 0 \\
\ast & \sqrt{2} \hat{g}_{2} & \ddots & \vdots \\
\ast & \ast & \ddots & 0 \\
\ast & \ast & \ast & \sqrt{2} \hat{g}_{\ell} \\ 
\end{bmatrix}, & \\
 & \hat{w}_{1} + \cdots + \hat{w}_{m} = n/2, & \\
 & \displaystyle \sum_{i=1}^{m} t_{ij} \leq \sqrt{2} \hat{g}_{j} &  (j=1,\ldots, \ell), \\
 & u_{i}^{2} \leq u_{1}^{2} & (i=2^{p-1} + \ell/2, \ldots, 2^{p}-1), \\
 \hline
 & u_{i} \geq 0 & (i = 1,\ldots, 2^{p}-1), \\
 & 0 \leq \hat{w}_{j} \leq 1/2 & (j = 1,\ldots, m), \\
 \hline
 & u_{i}^{2} \leq 2 u_{2i} u_{2i+1} & (i=1, \ldots, 2^{p-1}-1), \\
 & u_{i}^{2} \leq 2 \hat{g}_{2i - 2^{p} + 1} \hat{g}_{2i - 2^{p} + 2} & (i=2^{p-1}, \ldots, 2^{p-1} + \ell/2 - 1), \\
 & z_{ij}^{2} \leq 2 t_{ij} \hat{w}_{i} & (i = 1,\ldots, m, \, j=1,\ldots, \ell), 
\end{array}
\label{eq:opt_expr_SOCP_even}
\end{align}
in the case that $\ell$ is even, and 
\begin{align}
\begin{array}{cll}
\displaystyle
\mathop{\text{maximize}}_{
\begin{subarray}{l}
\{u_{1}, \ldots, u_{2^{p}-1} \} \subset \mathbf{R}, \\
\{ \hat{g}_{1}, \ldots, \hat{g}_{\ell} \} \subset \mathbf{R}, \\ 
Z = (z_{ij}) \in \mathbf{R}^{m \times \ell}, \\ 
T = (t_{ij}) \in \mathbf{R}^{m \times \ell}, \\ 
\{\hat{w}_{1}, \ldots, \hat{w}_{m} \} \subset \mathbf{R} 
\end{subarray}} & 
u_{1} & \\
\text{subject to} & (a_{1}, \ldots, a_{m}) \, Z = 
\begin{bmatrix}
\sqrt{2} \hat{g}_{1} & 0 & \cdots & 0 \\
\ast & \sqrt{2} \hat{g}_{2} & \ddots & \vdots \\
\ast & \ast & \ddots & 0 \\
\ast & \ast & \ast & 2 \hat{g}_{\ell} \\ 
\end{bmatrix}, & \\
 & \hat{w}_{1} + \cdots + \hat{w}_{m} = n/2, & \\
 & \displaystyle \sum_{i=1}^{m} t_{ij} \leq \sqrt{2} \hat{g}_{j} &  (j=1,\ldots, \ell-1), \\
 & \displaystyle \sum_{i=1}^{m} t_{i\ell} \leq 2 \hat{g}_{\ell},  &  \\
 & u_{i}^{2} \leq u_{1}^{2} & (i=2^{p-1} + (\ell+1)/2, \ldots, 2^{p}-1), \\
 \hline
 & u_{i} \geq 0 & (i = 1,\ldots, 2^{p}-1), \\
 & 0 \leq \hat{w}_{j} \leq 1/2 & (j = 1,\ldots, m), \\
 \hline
 & u_{i}^{2} \leq 2 u_{2i} u_{2i+1} & (i=1, \ldots, 2^{p-1}-1), \\
 & u_{i}^{2} \leq 2 \hat{g}_{2i - 2^{p} + 1} \hat{g}_{2i - 2^{p} + 2} & (i=2^{p-1}, \ldots, 2^{p-1} + (\ell-3)/2), \\
 & u_{i}^{2} \leq 2 \hat{g}_{2i - 2^{p} + 1} u_{1} & (i=2^{p-1} + (\ell-1)/2), \\
 & z_{ij}^{2} \leq 2 t_{ij} \hat{w}_{i} & (i = 1,\ldots, m, \, j=1,\ldots, \ell), 
\end{array}
\label{eq:opt_expr_SOCP_odd}
\end{align}
in the case that $\ell$ is odd. 

\subsubsection{Restriction about cone constraints}

MOSEK does not permit that a variable belongs to plural cones. 
However, we need to consider the cone constraints in~\eqref{eq:opt_expr_SOCP_even} and~\eqref{eq:opt_expr_SOCP_odd}
in which 
each $u_{i}$ appears $2$ times and
each $\hat{w}_{i}$ appears $\ell$ times. 
Therefore we prepare the copies of these variables:
\begin{align*}
u_{i} \ \leftrightarrow \ u_{i1}, u_{i2}, 
\qquad
\hat{w}_{i} \ \leftrightarrow \ \hat{w}_{i1}, \hat{w}_{i2}, \ldots, \hat{w}_{i \ell}, 
\end{align*}
and set the linear constraints between these variables: 
\begin{align}
& u_{i1} = u_{i2}, \label{eq:u_equalities} \\
& \hat{w}_{i1} = \hat{w}_{i2} = \cdots = \hat{w}_{i \ell}. \label{eq:w_equalities}
\end{align}
Then, we introduce a vector 
\begin{align}
v = 
(& u_{11}, u_{12}, \ u_{21}, u_{22}, \ \ldots, \ u_{(2^{p}-1), 1}, u_{(2^{p}-1), 2}; \notag \\
& \hat{g}_{1}, \hat{g}_{2}, \ldots, \hat{g}_{\ell}; \notag \\
& z_{11}, \ldots, z_{m1}, \ z_{12}, \ldots, z_{m2}, \ \ldots, \ z_{1\ell}, \ldots, z_{m\ell}; \notag \\ 
& t_{11}, \ldots, t_{m1}, \ t_{12}, \ldots, t_{m2}, \ \ldots, \ t_{1\ell}, \ldots, t_{m\ell}; \notag \\ 
& \hat{w}_{11}, \ldots, \hat{w}_{m1}, \ \hat{w}_{12}, \ldots, \hat{w}_{m2}, \ \ldots, \ \hat{w}_{1\ell}, \ldots, \hat{w}_{m\ell})^{T}. 
\end{align}

\subsection{Linear constraints}

To describe the linear constraints in~\eqref{eq:opt_expr_SOCP_even} and~\eqref{eq:opt_expr_SOCP_odd} in terms of the vector $v$, 
we provide a matrix $A$ and vectors $b_{\mathrm{l}}$ and $b_{\mathrm{u}}$ 
that express the constraints as $b_{\mathrm{l}} \leq Av \leq b_{\mathrm{u}}$. 

We begin with the case that $\ell$ is even. 
First, we express the constraints in~\eqref{eq:u_equalities} and~\eqref{eq:w_equalities} by
\begin{align}
\underbrace{
\begin{bmatrix}
1 & -1 & 0 & 0 & \cdots &  0 & 0 & 0_{\ell + 3m\ell}^{T} \\
0 & 0 & 1 & -1 & \cdots &  0 & 0 & 0_{\ell + 3m\ell}^{T} \\
\vdots & & & & \ddots & & \vdots & \vdots \\
0 & 0 & 0 & 0 & \cdots & 1 & -1 & 0_{\ell + 3m\ell}^{T} \\
\end{bmatrix}}_{A_{1}}
v = 0_{2^{p} - 1}, 
\end{align}
and 
\begin{align}
\underbrace{
\begin{bmatrix}
O_{m, 2(2^{p}-1) + \ell + 2m\ell} & I_{m} & -I_{m} & O_{m,m} & \cdots &  O_{m,m} & \\
O_{m, 2(2^{p}-1) + \ell + 2m\ell} & O_{m,m} & I_{m} & -I_{m} & & O_{m,m} & \\
\vdots & & & \ddots & & & \\
O_{m, 2(2^{p}-1) + \ell + 2m\ell} & O_{m,m} & \cdots & O_{m,m} & I_{m} & -I_{m} & \\
\end{bmatrix}}_{A_{2}}
v = 0_{m (\ell-1)}, 
\end{align}
respectively. 
Next, we consider the constraint
\begin{align*}
(a_{1}, \ldots, a_{m}) \, Z = 
\begin{bmatrix}
\sqrt{2} \hat{g}_{1} & 0 & \cdots & 0 \\
\ast & \sqrt{2} \hat{g}_{2} & \ddots & \vdots \\
\ast & \ast & \ddots & 0 \\
\ast & \ast & \ast & \sqrt{2} \hat{g}_{\ell} \\ 
\end{bmatrix}.
\end{align*}
By using $v$ and the expression 
\begin{align*}
(a_{1}, \ldots, a_{m}) = 
\begin{bmatrix}
\boldsymbol{\alpha}_{1} &
\cdots & 
\boldsymbol{\alpha}_{\ell} 
\end{bmatrix}^{T} \qquad 
(\boldsymbol{\alpha}_{j} \in \mathbf{R}^{m}), 
\end{align*}
we express the linear constraint as follows: 
\begin{align}
\underbrace{
\left[
\begin{array}{c|cccccc|cccc|c}
{0}_{2(2^{p}-1)}^{T} & -\sqrt{2} & 0 & 0 & \cdots & 0 & 0 & \boldsymbol{\alpha}_{1}^{T} & {0}_{m}^{T} & \cdots & {0}_{m}^{T} & {0}_{2m\ell}^{T}  \\
{0}_{2(2^{p}-1)}^{T} & 0 & 0 & 0 & \cdots & 0 & 0 & {0}_{m}^{T} & \boldsymbol{\alpha}_{1}^{T}  & & {0}_{m}^{T} & {0}_{2m\ell}^{T} \\
\vdots &  &  & \vdots & & \vdots & \vdots & &  & \ddots & \vdots & \vdots \\
{0}_{2(2^{p}-1)}^{T} & 0 & 0 & 0 & \cdots & 0 & 0 & {0}_{m}^{T} & \cdots & {0}_{m}^{T} & \boldsymbol{\alpha}_{1}^{T} & {0}_{2m\ell}^{T} \\
\hline
{0}_{2(2^{p}-1)}^{T} & 0 & -\sqrt{2} & 0 & \cdots & 0 & 0 & {0}_{m}^{T} & \boldsymbol{\alpha}_{2}^{T}  & & {0}_{m}^{T} & {0}_{2m\ell}^{T} \\
\vdots &  &  & \vdots & &  \vdots & \vdots & &  & \ddots & \vdots & \vdots \\
{0}_{2(2^{p}-1)}^{T} & 0 & 0 & 0 & \cdots & 0 & 0 & {0}_{m}^{T} & \cdots & {0}_{m}^{T} & \boldsymbol{\alpha}_{2}^{T} & {0}_{2m\ell}^{T} \\
\hline
\vdots &  &  & \vdots & &  \vdots & \vdots & & &  & \vdots & \vdots \\
\hline
{0}_{2(2^{p}-1)}^{T} & 0 & 0 & 0 & \cdots & 0 & -\sqrt{2} & {0}_{m}^{T} & \cdots & {0}_{m}^{T} & \boldsymbol{\alpha}_{\ell}^{T} & {0}_{2m\ell}^{T} \\
\end{array}
\right]}_{A_{3}}
v = 0_{\ell (\ell+1)/2}.
\end{align}
Next, we express the constraint $\hat{w}_{1\ell} + \cdots + \hat{w}_{m\ell} = n/2$ by
\begin{align}
\underbrace{
\begin{bmatrix}
0_{2(2^{p}-1) + \ell + 3m(\ell - 1)}^{T} & 1_{m}^{T}
\end{bmatrix}}_{A_{4}}
v = \frac{n}{2}.
\end{align}
Next, we express the inequality constraints 
\begin{align}
\sum_{i=1}^{m} t_{ij} \leq \sqrt{2} \hat{g}_{j} \quad (j = 1,\ldots , \ell)
\notag
\end{align}
by 
\begin{align}
\underbrace{
\left[
\begin{array}{c|cccc|c|cccc|c}
0_{2(2^{p}-1)}^{T} & -\sqrt{2} & 0 & \cdots & 0 & 0_{m\ell}^{T} & 1_{m}^{T} & 0_{m}^{T} & \cdots & 0_{m}^{T} & 0_{m\ell}^{T} \\
0_{2(2^{p}-1)}^{T} & 0 & -\sqrt{2} & \cdots & 0 & 0_{m\ell}^{T} & 0_{m}^{T} & 1_{m}^{T} & \cdots & 0_{m}^{T} & 0_{m\ell}^{T} \\
\vdots & \vdots & & \ddots & \vdots & \vdots & \vdots & & \ddots & \vdots & \vdots \\
0_{2(2^{p}-1)}^{T} & 0 & \cdots & 0 & -\sqrt{2} & 0_{m\ell}^{T} & 0_{m}^{T} & \cdots & 0_{m}^{T} & 1_{m}^{T} & 0_{m\ell}^{T} \\
\end{array}
\right]}_{A_{5}}
v \leq 0_{\ell}.
\end{align}
Finally, we consider the constraints 
\begin{align}
u_{i}^{2} \leq u_{1}^{2} \qquad (i = 2^{p-1} + \ell/2, \ldots, 2^{p}-1),
\end{align}
which are equivalent to the linear constraints $u_{i} \leq u_{1}$ owing to the non-negative constraints $u_{i} \geq 0$. 
These constraints are expressed by
\begin{align}
\underbrace{
\left[
\begin{array}{cc|c|ccccccc|c}
-1 & 0 & 0_{2^{p} + \ell - 4}^{T} & 1 & 0 & 0 & 0 & \cdots & 0 & 0 & 0_{\ell + 3m\ell}^{T} \\
-1 & 0 & 0_{2^{p} + \ell - 4}^{T} & 0 & 0 & 1 & 0 & \cdots & 0 & 0 & 0_{\ell + 3m\ell}^{T} \\ 
\vdots & \vdots & \vdots & \vdots & \vdots & & & \ddots & \vdots & \vdots & \vdots \\ 
-1 & 0 & 0_{2^{p} + \ell - 4}^{T} & 0 & 0 & 0 & 0 & \cdots & 1 & 0 & 0_{\ell + 3m\ell}^{T} \\
\end{array}
\right]}_{A_{6}}
v \leq 0_{2^{p-1}-\ell/2}. 
\end{align}
Consequently, by setting
\begin{align}
A & = 
\begin{bmatrix}
A_{1}^{T} & A_{2}^{T} & A_{3}^{T} & A_{4}^{T} & A_{5}^{T} & A_{6}^{T}
\end{bmatrix}^{T}, \\
b_{\mathrm{l}} 
& = 
\left( 
0_{2^{p} - 1}^{T}, \ 0_{m (\ell-1)}^{T}, \ 0_{\ell (\ell+1)/2}^{T}, \ n/2, \ -\infty \cdot 1_{\ell}^{T} , \ -\infty \cdot 1_{2^{p-1}-\ell/2}
\right)^{T}, \\
b_{\mathrm{u}} 
& = 
\left( 
0_{2^{p} - 1}^{T}, \ 0_{m (\ell-1)}^{T}, \ 0_{\ell (\ell+1)/2}^{T}, \  n/2, \ 0_{\ell}^{T} , \ 0_{2^{p-1}-\ell/2}^{T}
\right)^{T}, 
\end{align}
we can express the linear constraints as $b_{\mathrm{l}} \leq A v \leq  b_{\mathrm{u}}$.

In the case that $\ell$ is odd, 
we need to modify $A_{3}$, $A_{5}$, $A_{6}$, $b_{\mathrm{l}}$, and $b_{\mathrm{u}}$. 
We replace the component ``$-\sqrt{2}$'' in the last rows of $A_{3}$ and $A_{5}$ by ``$-2$''. 
Furthermore, 
We modify $A_{6}$, $b_{\mathrm{l}}$, and $b_{\mathrm{u}}$ as 
\begin{align}
A_{6} & =
\left.
\left[
\begin{array}{cc|c|ccccccc|c}
-1 & 0 & 0_{2^{p} + \ell - 3}^{T} & 1 & 0 & 0 & 0 & \cdots & 0 & 0 & 0_{\ell + 3m\ell}^{T} \\
-1 & 0 & 0_{2^{p} + \ell - 3}^{T} & 0 & 0 & 1 & 0 & \cdots & 0 & 0 & 0_{\ell + 3m\ell}^{T} \\ 
\vdots & \vdots & \vdots & \vdots & \vdots & & & \ddots & \vdots & \vdots & \vdots \\ 
-1 & 0 & 0_{2^{p} + \ell - 3}^{T} & 0 & 0 & 0 & 0 & \cdots & 1 & 0 & 0_{\ell + 3m\ell}^{T} \\
\end{array}
\right] 
\ \right\} \ \text{\footnotesize $2^{p-1} - (\ell+1)/2$}, \\
b_{\mathrm{l}} 
& = 
\left( 
0_{2^{p} - 1}^{T}, \ 0_{m (\ell-1)}^{T}, \ 0_{\ell (\ell+1)/2}^{T}, \ n/2, \ -\infty \cdot 1_{\ell}^{T} , \ -\infty \cdot 1_{2^{p-1}-(\ell+1)/2}
\right)^{T}, \\
b_{\mathrm{u}} 
& = 
\left( 
0_{2^{p} - 1}^{T}, \ 0_{m (\ell-1)}^{T}, \ 0_{\ell (\ell+1)/2}^{T}, \  n/2, \ 0_{\ell}^{T} , \ 0_{2^{p-1}-(\ell+1)/2}^{T}
\right)^{T}, 
\end{align}
respectively.

\subsection{Constraints for the ranges of the variables}

Besides the constraints $u_{i} \geq 0$ and $0 \leq \hat{w}_{j} \leq 1/2$ 
in~\eqref{eq:opt_expr_SOCP_even} and~\eqref{eq:opt_expr_SOCP_odd}, 
we can add $\hat{g}_{i} \geq 0$ and $t_{ij} \geq 0$ 
because any solution that does not satisfy these constraints is useless. 
Therefore by using the vectors given by
\begin{align}
\tilde{b}_{\mathrm{l}} 
& = 
\left(
0_{2(2^{p}-1) + \ell}^{T}, \ -\infty \cdot 1_{m\ell}^{T}, \ 0_{2m\ell}^{T} 
\right)^{T}, \\
\tilde{b}_{\mathrm{u}} 
& = 
\left(
\infty \cdot 1_{2(2^{p}-1) + \ell + 2m\ell}^{T}, \ (1/2) \cdot 1_{m\ell}^{T} 
\right)^{T},
\end{align}
we can express the constraints for the ranges of the variables as 
$\tilde{b}_{\mathrm{l}} \leq v \leq \tilde{b}_{\mathrm{u}}$.

\subsection{Cone constraints}

To express the cone constraints in~\eqref{eq:opt_expr_SOCP_even} and~\eqref{eq:opt_expr_SOCP_odd} by MOSEK, 
we need to specify the components of the vector $v$ 
that correspond to the constraints. 
For the constraints 
\begin{align}
u_{i}^{2} \leq 2 u_{2i} u_{2i+1} \qquad (i = 1, \ldots, 2^{p-1} - 1), \notag
\end{align}
we take the components $v_{2i}$, $v_{2(2i)-1}$, and $v_{2(2i+1)-1}$ for $u_{i}$, $u_{2i}$, and $u_{2i+1}$, respectively. 
For the constraints 
\begin{align}
u_{i}^{2} \leq 2 \hat{g}_{2i - 2^{p} + 1} \hat{g}_{2i - 2^{p} + 2} \qquad 
\begin{cases}
(i = 2^{p-1} , \ldots, 2^{p-1} + \ell/2 + 1) & (\ell \text{ is even}), \\
(i = 2^{p-1} , \ldots, 2^{p-1} + (\ell-3)/2) & (\ell \text{ is odd}), 
\end{cases}
\notag
\end{align}
we take the components $v_{2i}$, $v_{2^{p} + 2i - 1}$, and $v_{2^{p} + 2i}$ 
for $u_{i}$, $\hat{g}_{2i - 2^{p} + 1}$, and $\hat{g}_{2i - 2^{p} + 2}$, respectively. 
In the case that $\ell$ is odd, we take the components
$v_{2^{p} + \ell - 1}$, $v_{2(2^{p} -1) + \ell}$, and $v_{1}$
for the constraint $u_{2^{p-1} + (\ell-1)/2}^{2} \leq 2 \hat{g}_{\ell} u_{1}$. 
Finally, 
for the constraints
\begin{align}
z_{ij}^{2} \leq 2 t_{ij} \hat{w}_{i} \qquad (i=1,\ldots, m, \ j=1,\ldots, \ell), 
\notag
\end{align}
we take the components $v_{2(2^{p}-1) + \ell + ij}$, $v_{2(2^{p}-1) + \ell + ij + m\ell}$, and $v_{2(2^{p}-1) + \ell + ij + 2m\ell}$.

\end{document}